\pgfplotsset{compat=newest}
\newtheorem{theorem}{Theorem}[section]
\newtheorem{lemma}[theorem]{Lemma}
\newtheorem{proposition}[theorem]{Proposition}
\newtheorem{remark}[theorem]{Remark}
\newtheorem{definition}{Definition}[section]
\newcommand{\margnote}[1]{\ifthenelse{\boolean{shownotes}}
	{\marginpar{\raggedright\tiny\texttt{#1}}}{}}
\let\RE\Re
\let\Re=\undefined
\DeclareMathOperator{\Re}{\RE e}
\let\IM\Im
\let\Im=\undefined
\DeclareMathOperator{\Im}{\IM m}
\newcommand{\abs}[1]{\left|#1\right|}
\newcommand{\norm}[1]{\left\|#1\right\|}
\newcommand{\set}[1]{\left\{#1\right\}}
\newcommand{\ve}{\varepsilon}
\newcommand{\uep}{{u_\ve}}
\newcommand{\vep}{{v_\ve}}
\newcommand{\wep}{{w_\ve}}
\newcommand{\owep}{{\overline{w}_\ve}}
\newcommand{\gep}{{\gamma_\ve}}
\newcommand{\Wep}{{W_\ve}}
\newcommand{\dx}{{\;dx}}
\newcommand{\dmu}{{\;d \mu}}
\newcommand{\Oe}{{\Omega_\ve}}
\newcommand{\Oep}{{\Omega^{'}_\ve}}
\newcommand{\Ses}{{\Sigma_\ve}}
\newcommand{\COe}{{\Oe \backslash \Oep}}
\newcommand{\OOe}{{\Omega \backslash \Oe}}
\newcommand{\OOep}{{\Omega \backslash \Oep}}
\newcommand{\divz}{(\nabla\cdot)^m}
\renewcommand{\div}{(\nabla\cdot)}
\newcommand{\hzn}{H_0^m(\Omega)}
\newcommand{\tensor}{\mathbb{M}}
\newcommand{\const}{C(\kappa,\Omega,K,\delta_0,L_0,\norm{f}_{L^\infty(\Omega)},\alpha)}
\newcommand{\functional}[2]{\mathcal{J}(#1;#2)}
\newcommand{\normalderivative}[1]{\frac{\partial #1}{\partial n}}
\newcommand{\normalderivativem}[2]{\frac{\partial^{#2}#1}{\partial n^{#2}}}
\newcommand{\N}{\mathds{N}}
\newcommand{\R}{\mathds{R}}
\newtheorem{assumption}[theorem]{Assumption}
\begin{document}
\title[Asymptotic Expansions for Higher Order Elliptic Equations]{Asymptotic Expansions for Higher Order Elliptic Equations with an Application to Quantitative Photoacoustic Tomography}

\author[A. Aspri {\em et al.}]{Andrea Aspri}
\address{ Johann Radon Institute for Computational and Applied Mathematics (RICAM)}
\email{andrea.aspri@ricam.oeaw.ac.at}

\author[]{Elena Beretta}
\address{Dipartimento di Matematica, Politecnico di Milano and Department of Mathematics, NYU-Abu Dhabi}
\email{elena.beretta@polimi.it}

\author[]{Otmar Scherzer}
\address{Faculty of Mathematics, University of Vienna}
\email{otmar.scherzer@univie.ac.at}

\author[]{Monika Muszkieta}
\address{Faculty of Pure and Applied Mathematics, Wroclaw University of Science and Technology}
\email{monika.muszkieta@pwr.edu.pl}

\date{\today}

\keywords{Asymptotic expansions, higher order equations, thin stripes, inverse problem, quantitative photoacoustic}
%

\subjclass[2010]{Primary 35G15; Secondary 35C20, 35R30}

\begin{abstract}
In this paper, we derive new asymptotic expansions for the solutions of higher order elliptic equations in the presence of small inclusions.
As a byproduct, we derive a topological derivative based algorithm for the 
reconstruction of piecewise smooth functions.
This algorithm can be used for edge detection in imaging, topological optimization, 
and for inverse problems, such as Quantitative Photoacoustic Tomography, for which we demonstrate the effectiveness of our asymptotic expansion method 
numerically.
\end{abstract}

\maketitle
\section{Introduction} \label{sec:intro}

In this paper, we propose a new algorithm based on \emph{topological derivatives}
(see for example \cite{AmmBrerKanLee15,AmmKan04,LarFehMas12,SokZoc99,SokZol92}   for a review on topological optimization) which allows for detection of discontinuities
and its \emph{derivatives} of a given function $f$. 
The basic idea is that $f$ can be viewed as a piecewise smooth function and edges in $f$ and its derivatives can be modeled accurately 
by a set of singularities along small line segments. 
More precisely, given a noisy image $f$, in order to find a smoothed version $u$ of $f$, we consider the following functional 
for a fixed order $m\in\N$,
\begin{equation} \label{eq:Jeps_f}
\functional{u}{v} := \frac{1}{2}\int_\Omega (u-f)^2 \dx + \frac{\alpha}{2}\int_\Omega v \abs{\nabla^m u}^2 \dx,
\end{equation}
where $v\in L^\infty(\Omega)$, $v>0$. The case $m=1$ has been already studied in \cite{BerGraMusSch14}.
The first term in \eqref{eq:Jeps_f} measures the fidelity to the given image $f$ while the second one is a regularization term which accounts for the presence of discontinuities. The minimizer $u\in H^m_0(\Omega)$ of \eqref{eq:Jeps_f}, where $H^m_0(\Omega)$ is the classical Sobolev space of traces of derivatives which are zero on $\partial\Omega$ up to $m-1$, is also a weak solution of
	\begin{equation}\label{eq:ell_eq}
	\begin{cases} 
	\vspace{0.1cm}
	u + \alpha\ (-1)^m  \divz (v \nabla^m u)= f &\text{ in } \Omega,\\
	u = \normalderivative{u} =\cdots=\normalderivativem{u}{m-1}= 0 &\text{ on } \partial \Omega.
	\end{cases}
	\end{equation}
where $\divz$ is the divergence operator applied $m$ times, $n$ is the outward normal vector on $\partial\Omega$, $\frac{\partial}{\partial n}$ the normal derivative, and, recursively, $\normalderivativem{u}{l}=\frac{\partial}{\partial n}(\normalderivativem{u}{l-1})$, for $l=2,\cdots,m-1$.
In order to detect discontinuities in $f$ and its derivatives, we study the variation of the minimizer of the functional
with respect to small variations of $v$. We denote by $v_\ve$ and $u_\ve$ the perturbation of $v$ and $u$, respectively. Obviously, $u_\ve$ satisfies the same equation in \eqref{eq:ell_eq}, substituting $v$ with $v_\ve$. With small variations, we mean that $v_\ve$ differ from $v$ by a constant times a
characteristic function of small support - in our case a thin neighborhood of a line segment. Specifically, denoting with $\Omega_\ve(y,\tau)$ the thin strip centered at $y$ and along the direction $\tau$, and with $\kappa$ a real number such that $0<\kappa<\frac{1}{2}$, we have that 
	\begin{equation*}
	   (\vep-v)(x)=\begin{cases}
	   \kappa-1 & x \in \overline{\Omega}_{\varepsilon}(y,\tau),\\
	   0 & \text{ otherwise}.
	   \end{cases}	
	\end{equation*}
This procedure introduces discontinuities in derivatives of order $m$ of the minimizer along line segments and an accurate asymptotic analysis of the minimizer with respect to $v_\ve$ allows to compare the perturbed functional, $\functional{u_\ve}{v_\ve}$, with the unperturbed one, $\functional{u}{v}$, and to derive a rigorous formula of the topological derivative of the functional. Precisely, we prove that  
	\begin{equation*}
		\functional{\uep}{\vep}=\functional{u}{v} + 2\ve^3\alpha(\kappa-1)\mathbb{M}\nabla^mu(y)\cdot\nabla^mu(y)+o(\ve^3).
	\end{equation*}
where $\mathbb{M}$ is a tensor of order $2m$, which contains geometrical information on the direction of the line segment along which the discontinuity occurs.

The idea of ``drilling small holes''
in the domain for edge detection,  when $m=1$, has been introduced in \cite{Mus09} and has been successfully implemented in \cite{GraMusSch13},
where also a conceptual connection to Mumford-Shah minimization and the Ambrosio-Tortorelli relaxation 
\cite{AmbTor90} has been highlighted. Later, these results have been extended to the case where the edge set is covered with thin stripes rather than with balls in \cite{BerGraMusSch14}. 
Therefore, the main novelty in this paper is to use higher order derivatives discontinuities of the minimizer $u$ to recover more accurately image discontinuities. In order to get these results we need to overcome several significant difficulties, by combining known methods and
recent results on higher order elliptic equations with discontinuous coefficients, see \cite{Bar16}, and tensors, see \cite{ComGolLimMou08}, and generalizing some of them, in a novel way.

In the first part of our paper we derive an asymptotic expansion for the perturbed functional $u \to \mathcal{J}(\cdot,\vep)$ due to the presence of a small measurable set via compensated compactness, following the ideas of \cite{CapVog06} where an asymptotic expansion of perturbations of solutions to the conductivity equation has been derived.
Here, however,  the minimizer of the functional is the solution to a partial differential equation
of order $2m$, where $m>1$, with discontinuous leading coefficients, for which the theoretical basis is much less advanced (see \cite{Bar16}).  Also, to our knowledge,  asymptotic expansions of solutions of higher order elliptic equations have been derived only in the case of diametrically small domains \cite{AmsNovGoe14} while the analysis in the case of arbitrary measurable small domains, and in the form of thin strips like the ones considered here, is new and not at all straightforward. In particular, we obtain a full characterization of the Polya-Szego polarization tensor in the case of thin strips generalizing the results obtained in \cite{BerFraVog03,BerFra06}, and \cite{BerBonFraMaz12} for the conductivity equation and the linearized system of elasticity.

As explained in \cite{LarFehMas12}, topological gradient methods have been successfully applied to different areas of applications: topology optmization problems, image processing, image reconstructions to cite a few.\\
In the second part of the paper, we use the above results in an application to \emph{Quantitative Photoacoustic Tomography} (qPAT) (see
\cite{BerMusNaeSch16}). This is an imaging technique that excites a specimen by electromagnetic waves
and records the resulting ultrasound see \cite{Wan09} for an extensive treatment of the experimental issues and
see \cite{Kuc14,KucKun08} for an extensive mathematical analyis. Since the electromagnetic pulses used in PAT are
very short, the complete energy is deposited almost instantaneously compared to the travel times of the induced
acoustic waves, and therefore the pressure wave $p$ can be assumed to have been generated by an \emph{initial pressure}
$\mathcal{H}(x)$, that is, it satisfies the wave equation
\begin{equation*}
\begin{aligned}
\partial_{tt}p(x,t) - \Delta p(x,t) &= 0 \\
\partial_t p(x,0) &= 0 \\
p(x,0) &= \mathcal{H}(x)
\end{aligned}
\end{equation*}
and $p|_{\mathcal{M}}$, where $\mathcal{M}$ denotes a \emph{measurement surface}, can be obtained from ultrasound
measurements.

By solving an \emph{inverse problem for the wave equation} (see, e.g, \cite{KucKun08} for a review on mathematical and numerical
techniques), these ultrasound measurements can be used to estimate the \emph{initial pressure} $\mathcal{H}(x)$.  Since
\begin{equation} \label{eq:def_E}
\mathcal{H}(x) = \Gamma(x) \mathcal{E}(x) = \Gamma(x) \mu(x) u(x),
\end{equation}
in the particular case where $\Gamma$ is constant, $\mathcal{H}(x)$ and the \emph{absorbed energy} $\mathcal{E}(x)$ are proportional, which is in turn  proportional to the product of \emph{optical absorption coefficient} $\mu(x)$ and the fluence $u(x)$ (the time-integrated laser power received at $x$). Therefore, the initial pressure visualizes contrast in $\mu$.
The coefficient $\Gamma(x)$ is called \emph{Gr\"uneisen coefficient} (or \emph{photoacoustic efficiency}
since it describes the efficiency of conversion from absorbed energy to acoustic signal) \cite{CoxLauArrBea12,CoxLauBea09,WanWu07}.

\emph{Quantitative Photoacoustic Tomography} (qPAT) consists in determining spatially
heterogeneous Gr\"uneisen, absorption and diffusion coefficients, $\Gamma$, $\mu$ and $D$, from photoacoustic
measurements of the absorbed energy $\mathcal{H}=\Gamma \mu u$, where $u$ satisfies
\begin{equation} \label{qpatproblem}
\begin{aligned}
-\nabla \cdot(D\nabla u) + \mu u &= 0 \text{ in } \Omega\subset \mathbb{R}^2,\\
u&=g  \text{ on } \partial\Omega.
\end{aligned}
\end{equation}
Here $g$ is a given function which describes the illumination pattern.

Many facets of qPAT have been considered already in the literature; an incomplete list is
\cite{YuaJia06,CoxLauBea09,YaoSunJia09,BalUhl10,BalUhl12,CoxLauArrBea12,HuaYaoLiuRonJia12,HuaRonYaoQiWuXuJia13,RenGaoZhao13,MamRen14,PulCoxArrKaiTar15}.
In general the problem of estimating $\Gamma$, $\mu$ and $D$ is ill-posed since it
admits an infinite number of solution pairs (see \cite{BalUhl10,BalUhl12}).

As one motivation for this paper serves qPAT with piecewise constant material parameters $\mu$ and $D$ and a constant known $\Gamma$ as considered previously
in \cite{NaeSch14,BerMusNaeSch16}. Opposed to the general setting, if $\mu$ and $D$ are piecewise
constant functions, then they can be uniquely determined from knowledge of the absorbed energy $\mathcal{E}$ if in addition the values of the two parameters
at the boundary are known \cite{NaeSch14}. Moreover, it is a useful fact that the union of the jumps of 
$\mu$ and $D$ are contained in the set of discontinuities of derivatives up to the $2$-nd order of the qPAT measurement
absorbed energy $\mathcal{E}$, which then takes the role of the data function $f$ in \eqref{eq:Jeps_f}. \\
In order to recover its discontinuities and consequently the discontinuities of $\mu$ and $D$, we use the topological derivative approach described in the first part. This allows to detect discontinuities  of $\mathcal{E}$ more accurately than in \cite{BerMusNaeSch16} where a variational method based on an Ambrosio–Tortorelli relaxation
of a Mumford–Shah-like functional has been considered. In fact, the numerical algorithm that we present here is sufficiently robust to identify both $\mu$ and $D$ even in the presence of noise in the data $f$, see Section \ref{sec:numerics} for more details and comparisions.

The outline of the paper is as follows: In \autoref{sec:notation} we introduce the basic notation and state the
general assumptions used throughout the paper. In \autoref{le:tas}, we study the properties of the  minimizers of the functional \eqref{eq:Jeps_f} and of its perturbed version. This is, in fact, the first step in order to introduce, in \autoref{sec:pol}, the tensor of order $2m$ which is the basic
analytical tool needed to characterize the topological gradient (see \autoref{sec:top_gradients}) and to describe the variations of the minimizers of the functionals \eqref{eq:Jeps_f}. Finally, we complement this analytical theory with numerical examples. In fact, \autoref{sec:numerics} is devoted to the presentation of generalized versions of the numerical algorithms described in \cite{BerGraMusSch14}, which are applied to Quantitative Photoacoustic Tomography.

\section{Notation and main assumptions}
\label{sec:notation}
In this section we recall the notation regarding tensors and functional spaces and the main assumptions used along this paper. \ \\ \ \\

We first recall that a tensor of order $m$ can also be represented as hypermatrices by choosing a basis, i.e., $A_{i_m,\cdots,i_m}$, where $i_k=1,\cdots,d$ and $d$ is the dimension of vector space, see for example \cite{ComGolLimMou08}. In the sequel $d=2$.	
	\begin{description}
		\item[Tensors and hypermatrices]\ \\
		\begin{itemize}
			\item Latin capital letters, e.g., $A$, $E$, $V$, indicates tensors of order $m$. The blackbord bord letters, e.g. $\mathbb{M}$, represents tensors of order $2m$.  
			\item Given a vector of components $(i_1,\cdots i_m)$, with $m\geq 2$, we use the symbol $\bm{i}:=(i_1,\cdots, i_m)$ in order to shorten the notation of indices for tensors. For instance, we will often write $A_{\bm{i}}$ instead of $A_{i_1,\cdots,i_m}$. 
			\item Let $A, B \in \R^{2 \times 2\times ...\times 2}$ be two tensors of order $m$, then $A \cdot B = \sum_{\bm{i},\bm{j}=1}^2 a_{\bm{i}\bm{j}} b_{\bm{i}\bm{j}}$ denotes the usual scalar
			product, i.e.,
				\begin{equation*}
					A \cdot B = \sum_{\substack{\bm{i}=(i_1,\cdots,i_m)=1 \\   \bm{j}=(j_1,\cdots,j_m)=1}}^2 a_{i_1,\cdots,i_m,j_1,\cdots,j_m} b_{i_1,\cdots,i_m,j_1,\cdots,j_m}
				\end{equation*}
			\item Let $A, B \in \R^{2 \times 2\times \cdots \times 2}$ be two tensors of order $m$, then $\mathbb{M}_{\bm{i}\bm{j}} := A \otimes B = \begin{bmatrix}
			a_{\bm{i}}b_{\bm{j}}
			\end{bmatrix}$ is a tensor of order $2m$.
			\item Let $A \in \R^{2 \times 2\times\cdots \times 2}$ be a tensor of order $m$, then $\abs{A}:=\left(\sum_{\bm{i}=(i_1,\cdots,i_m)=1}^2 a_{\bm{i}}^2 \right)^{\frac{1}{2}}$ denotes the Frobenius norm of $A$.
		\end{itemize}
		\item[Function Spaces:]\ \\
		\begin{itemize}
			\item $H^j(\Omega)$ denotes the Sobolev space, where all derivatives up to order $j \in \N$ are square integrable
			(see for instance \cite{Ada75}).
			\item $H_0^j(\Omega) := \overline{C_0^\infty(\Omega)}^{H^j(\Omega)}$ is the subspace of functions, which satisfy homogenuous
			boundary condition. See again \cite{Ada75}.
			\item $\nabla^m$ u represents the tensor of the derivatives of order $m$ of the function $u$, i.e.,  
			\begin{equation*}
			\nabla^mu=(\nabla^m u)_{i_1,\cdots,i_m}=\frac{\partial^mu}{\partial x_{i_1}\cdots \partial x_{i_m}}
			\end{equation*}
			where $i_k=1,2$, for $k=1,\cdots,m$.
			\item $\div$ is the divergence operator.
			\item Let $A : \Omega \to \R^{2 \times 2\times\cdots\times 2}$ be a tensor of order $m$, then, we define
			\begin{equation*}
			(\div  A)_{i_1,\cdots,i_{m-1}}: = \sum_{i_m=1}^{2}\partial_{x_{{i_m}}} A_{{i_1\cdots i_m}}.
			\end{equation*}
			\item We define the divergence operator, $\divz A$, applied $m$ times inductively, i.e., $\divz=(\nabla\cdot)((\nabla^{m-1}\cdot)A)$.
			\item The polyharmonic operator is defined inductively by $\Delta^m u=\Delta(\Delta^{m-1}u)$, with $m\geq 2$.
			\item Denoting with $n$ the outward normal vector on $\partial\Omega$ and with  $\frac{\partial}{\partial n}$ the normal derivative, the symbol $\normalderivativem{u}{m}$ is defined recursively by $\normalderivativem{u}{m}=\frac{\partial}{\partial n}(\normalderivativem{u}{m-1})$.
			\item $\zeta$ and $f$ will denote bounded functions such that
			$\zeta \in L_+^\infty(\Omega):=\{\zeta\in L^{\infty}(\Omega):\zeta> 0\, \text{a.e. in }\Omega \}$ and $f \in L^\infty(\Omega)$.
		\end{itemize}
		\item[Parameters:]\ \\
		\begin{itemize}
			\item $\alpha>0$ is fixed during the whole paper and has the role of a regularization parameter.
		\end{itemize}
	\end{description}
Let us now state the main assumptions.
\begin{assumption}[Domains]\label{ass:assumption_domains}\ \\
  \begin{enumerate}[(i)]
    \item $\Omega$ denotes an open, connected and bounded subset of $\R^2$ with Lipschitz boundary $\partial \Omega$.
    \item $\mathcal{B}_\rho(y)$ denotes a 2-dimensional ball of radius $\rho$ and center $y \in \R^2$.
    \item $K$ denotes a closed subset of $\Omega$ with positive 2D-Hausdorff measure.
    \item $\delta_0 > 0$ and $L_0 \subseteq \Omega \backslash K$ is an open domain with smooth boundary satisfying
          \begin{equation} \label{eq:lnull}
           \text{dist} (\overline{L}_0, \partial \Omega \cup K) \geq \delta_0 > 0.
          \end{equation}
    \item $0 < \ve < 1$, $y \in L_0$ and $\tau \in \mathbb{S}^1$ be fixed, such that
          \begin{equation} \label{eq:thinstripe}
            \Oe(y,\tau):= \set{x \in \R^2 : \text{dist}(x,\Ses(y,\tau))< \ve^2},
          \end{equation}
          with
          \begin{equation} \label{eq:Ses}
           \Ses(y,\tau):=\set{x \in \R^2 : x = y+\rho \tau,\ -\ve \leq\rho\leq \ve},
          \end{equation}
          is contained in $L_0$. In particular $\Oe(y,\tau)$  does not contain $K$.
          Moreover, we denote by $\Oep$ the rectangular box around $\Ses(y,\tau)$ and the two caps by
          $\COe$ (see \autoref{fig:L}).
          Since $0 < \ve < 1$, $y \in L_0$ and $\tau \in \mathbb{S}^1$ are fixed throughout the paper,
          we omit the dependencies of $\Ses(y,\tau)$ and $\Oe(y,\tau)$ on $y$ and $\tau$.
    \item $n_\ve$, $n$ denote the outward normal vectors to $\partial \Oe$ and
          $\partial \Omega$, respectively.
 \end{enumerate}
\end{assumption}
\begin{figure}
  \begin{center}
  \begin{tikzpicture}
   \draw (0,0) circle (0.2\textwidth);
   \draw [color=black] (0,0.21\textwidth) node[anchor=west] {$\partial \Omega$};
   \draw[black,thick]
(-0.1\textwidth,0.1\textwidth) rectangle (0.1\textwidth,0.038\textwidth) { };
   \draw [color=black] (0,0.9) node[anchor=west] {$K$};
   \filldraw[color=red!60, fill=red!5, very thick] (-0.1\textwidth,-0.06\textwidth)
            arc[radius = 0.01\textwidth, start angle= 90, end angle= 270]--cycle;
   \filldraw[color=red!60, fill=red!5, very thick] (-0.075\textwidth,-0.08\textwidth)
            arc[radius = 0.01\textwidth, start angle= -90, end angle= 90]--cycle;
   \filldraw[color=red!60, fill=red!5, very thick]
            (-0.1\textwidth,-0.06\textwidth) -- (-0.075\textwidth,-0.06\textwidth)
            (-0.1\textwidth,-0.08\textwidth) -- (-0.075\textwidth,-0.08\textwidth);
   \draw[red,thick,dashed,rounded corners=0.025\textwidth]
        (-0.125\textwidth,-0.125\textwidth) rectangle (0.125\textwidth,-0.05\textwidth) { };
   \draw (-0.13\textwidth,-0.091\textwidth) node[anchor=west] [color = red] {$\Oe(y,\tau)$};
   \draw (0,-0.1\textwidth) node[anchor=west] [color=red]{$L_0$};
\end{tikzpicture}
\qquad
\begin{tikzpicture}
 \filldraw[color=red!60, fill=red!5, very thick] (0,0.05\textwidth) arc[radius = 0.05\textwidth, start angle= 90, end angle= 270]--cycle
          (0,-0.05\textwidth) -- (0,0.05\textwidth);
 \filldraw[color=red!60, fill=red!5, very thick] (0.25\textwidth,-0.05\textwidth)
          arc[radius = 0.05\textwidth, start angle= -90, end angle= 90]--cycle (0.25\textwidth,-0.05\textwidth) --(0.25\textwidth,0.05\textwidth);
 \filldraw[color=green, fill=green, very thick]
          (0,0.05\textwidth) -- (0.25\textwidth,0.05\textwidth) (0.25\textwidth,0.05\textwidth)--(0.25\textwidth,-0.05\textwidth)
          (0.25\textwidth,-0.05\textwidth) -- (0,-0.05\textwidth) (0,-0.05\textwidth) -- (0,0.05\textwidth);
 \draw (0,0\textwidth) -- (0.25\textwidth,0\textwidth) node[pos=0.7,above,blue]{$\Ses(y,\tau)$};
 \draw (0.0075\textwidth,0.025\textwidth) node[anchor=west] [color=red]{$\ve^2$};
 \draw (0.05\textwidth,0.06\textwidth) node[anchor=west] [color=green]{$\ve$};
 \draw (0.0075\textwidth,-0.025\textwidth) node[anchor=west] [color=red]{$\ve^2$};
 \draw (0.2\textwidth,0.06\textwidth) node[anchor=west] [color=green]{$\ve$};
 \draw (0.3\textwidth,0.025\textwidth) node[anchor=west]{$\textcolor{red}{\Oe(y,\tau)} \backslash \textcolor{green}{\Oep(y,\tau)}$};
 \draw [color=green] (0.15\textwidth,-0.025\textwidth) node[anchor=west]{$\Oep(y,\tau)$};
\end{tikzpicture}
\end{center}
\caption{\label{fig:L} $L_0$ does not touch $\partial \Omega$ and $K$ and can contain the stripe $\Oe(y,\tau)$.
         The scaling of the thin stripe: $\Oe(y,\tau) = \Oep(y,\tau) \cup \Oe(y,\tau) \backslash
         \Oep(y,\tau)$.}
\end{figure}
\begin{remark} \label{re:morphology}
 We use the terminology of morphological image analysis (see for instance \cite{Soi99}).
 Let $B \subseteq \R^2$ be a \emph{structuring element} and $A \subseteq \R^2$ an arbitrary set,
 then the \emph{dilation} of $A$ with respect to the structuring element $B$ is defined as follows:
 \begin{equation*}
  A \oplus B := \set{x + y : x \in A \text{ and } y \in B}.
 \end{equation*}
 In particular
\begin{equation*}
 \Ses(y,\tau) = \set{y} \oplus \Ses(0,\tau) \text{ and }
 \Oe (y,\tau) = \Ses(y,\tau) \oplus \mathcal{B}_\rho(0).
\end{equation*}
\end{remark}
\begin{assumption}[Functions]\label{ass:funct}
Let $0 < \kappa <\frac{1}{2}$. We define the following functions:
\begin{enumerate}[(i)]
  \item Recalling the definition of $K$, see Assumption \ref{ass:assumption_domains}, we define
  		\begin{equation}\label{eq:v}
           v = \kappa \chi_K + 1 \chi_{\Omega \backslash K}.
        \end{equation}
  \item For given $(y,\tau)$ such that $\Oe(y,\tau) \subseteq L_0$ we define $\vep:\Omega\rightarrow \mathbb{R}$ by
        \begin{equation}\label{eq:ve}
           \vep(x)=\begin{cases}
                    \kappa& x\in K \cup \overline{\Omega}_{\varepsilon}(y,\tau),\\
                        1 & \text{ otherwise.}
                    \end{cases}
        \end{equation}
        Therefore, we have
        \begin{equation}\label{eq:ve_difference}
           (\vep-v)(x)=\begin{cases}
                        \kappa-1 & x \in \overline{\Omega}_{\varepsilon}(y,\tau),\\
                        0 & \text{ otherwise.}
                       \end{cases}
        \end{equation}
 \end{enumerate}
\end{assumption}

\begin{remark}
We note that
\begin{equation} \label{eq:area_incl}
   \abs{\Oe}=\ve^3(4+\pi\ve) = \mathcal{O}(\ve^3), \abs{\Oep}= 4\ve^3 = \mathcal{O}(\ve^3) \text{ and }
   \abs{\Oe \backslash \Oep} = \pi \ve^4 = \mathcal{O}(\ve^4).
\end{equation}
\end{remark}
We finally stress that all along this paper $C$ denotes a generic constant, which can depend on
$\kappa,\delta_0,y,\tau,K,\Omega,L_0,\alpha$, and $f$
but not necessarily needs to depend on all of them. That is
\begin{equation} \label{eq:C}
C:=\const > 0.
\end{equation}
\section{Asymptotic analysis}\label{le:tas}
Let $\zeta \in L_+^\infty(\Omega)$, $f\in L^{\infty}(\Omega)$, $\alpha > 0$ and $m\geq2$ fixed. In this section we are analyzing the following
functional defined on $\hzn$:
\begin{equation} \label{Jeps}
 \functional{u}{\zeta} := \frac{1}{2} \int_\Omega (u-f)^2 \dx + \frac{\alpha}{2} \int_\Omega \zeta \abs{\nabla^m u}^2 \dx.
\end{equation}
In the following we characterize the minimizer of $\functional{\cdot}{\zeta}$ as the solution of a
$2m$-order partial differential equation:
\begin{lemma} \label{le:pde}
 Let $v$ be defined as in \eqref{eq:v}, then there exists a unique minimum
 $u \in \hzn$ of $\functional{u}{v}$ and $u$ is also a weak solution of
 \begin{equation}\label{u_strong}
 	\begin{cases} 
 	\vspace{0.1cm}
     u + \alpha\ (-1)^m  \divz (v \nabla^m u)= f &\text{ in } \Omega,\\
     u = \normalderivative{u} =\cdots=\normalderivativem{u}{m-1}= 0 &\text{ on } \partial \Omega.
   \end{cases}
 \end{equation}
 In addition, let $\vep$ be defined as in \eqref{eq:ve}, then there exists a unique minimum
 $\uep \in \hzn$ of $\functional{\uep}{\vep}$ and $\uep$ is also a weak solution of
 \begin{equation} \label{ue_strong}
  \begin{cases}
  \vspace{0.1cm}
      \uep + \alpha\ (-1)^m  \divz (\vep \nabla^m \uep) = f &\text{ in } \Omega,\\
 	  \uep = \normalderivative{\uep} =\cdots=\normalderivativem{\uep}{m-1}= 0 &\text{ on } \partial \Omega.
  \end{cases}
 \end{equation}
 Moreover, the following energy estimate holds
 \begin{equation*}
  \max \set{\norm{u}_{H^m(\Omega)},\norm{\uep}_{H^m(\Omega)}} \leq C \norm{f}_{L^{\infty}(\Omega)}.
 \end{equation*}
\end{lemma}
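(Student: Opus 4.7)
The plan is to treat both the unperturbed problem (with $v$) and the perturbed problem (with $v_\varepsilon$) in a single unified step, since both coefficients $\zeta\in\{v,v_\varepsilon\}$ are bounded, measurable, and satisfy the uniform ellipticity bound $\kappa\le\zeta\le 1$ a.e. in $\Omega$. Under these hypotheses the functional $\mathcal{J}(\,\cdot\,;\zeta)$ on $H_0^m(\Omega)$ is quadratic, so existence and uniqueness of the minimizer will follow from the direct method of the calculus of variations (equivalently, from Lax--Milgram applied to the Euler--Lagrange bilinear form).

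First I would set up the associated bilinear form
\[
a_\zeta(u,w):=\int_\Omega u\,w\dx+\alpha\int_\Omega \zeta\,\nabla^m u\cdot\nabla^m w\dx,\qquad \ell(w):=\int_\Omega f w\dx,
\]
and check its continuity on $H_0^m(\Omega)$ (immediate from $\zeta\le 1$ and Cauchy--Schwarz) and its coercivity. Coercivity is the quantitative input: using $\zeta\ge \kappa$ and the iterated Poincar\'e inequality on $H_0^m(\Omega)$ (which produces a constant $C_P=C_P(\Omega,m)$ such that $\|u\|_{H^m(\Omega)}\le C_P\|\nabla^m u\|_{L^2(\Omega)}$), one obtains $a_\zeta(u,u)\ge \min(1,\alpha\kappa/C_P^2)\,\|u\|_{H^m(\Omega)}^2$. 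Lax--Milgram then yields a unique $u\in H_0^m(\Omega)$ with $a_\zeta(u,w)=\ell(w)$ for every $w\in H_0^m(\Omega)$, and this $u$ is also the unique minimizer of $\mathcal{J}(\,\cdot\,;\zeta)$ since the functional is strictly convex.

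Next I would recover the strong form \eqref{u_strong}, respectively \eqref{ue_strong}. Testing the weak identity against $w\in C_0^\infty(\Omega)$ and integrating by parts $m$ times, the boundary contributions vanish because $w$ and all its normal derivatives up to order $m-1$ vanish on $\partial\Omega$; this leaves
\[
\int_\Omega\bigl(u+\alpha(-1)^m(\nabla\cdot)^m(\zeta\nabla^m u)-f\bigr)w\dx=0,
\]
so the PDE holds in the distributional sense. The homogeneous Dirichlet-type boundary conditions $u=\partial_n u=\cdots=\partial_n^{m-1}u=0$ on $\partial\Omega$ are encoded in the space $H_0^m(\Omega)$ via trace theory. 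The same argument, verbatim with $\zeta=v_\varepsilon$, produces $u_\varepsilon$ and equation \eqref{ue_strong}.

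Finally, for the energy estimate I would take $w=u$ in the weak formulation to get
\[
\|u\|_{L^2(\Omega)}^2+\alpha\kappa\|\nabla^m u\|_{L^2(\Omega)}^2\le\int_\Omega f u\dx\le\tfrac{1}{2}\|f\|_{L^2(\Omega)}^2+\tfrac{1}{2}\|u\|_{L^2(\Omega)}^2,
\]
whence $\|u\|_{L^2}^2+2\alpha\kappa\|\nabla^m u\|_{L^2}^2\le \|f\|_{L^2}^2\le|\Omega|\,\|f\|_{L^\infty(\Omega)}^2$. Combining with the Poincar\'e inequality produces $\|u\|_{H^m(\Omega)}\le C\|f\|_{L^\infty(\Omega)}$ with a constant $C=C(\kappa,\alpha,\Omega,m)$ that is independent of the specific choice $\zeta\in\{v,v_\varepsilon\}$; in particular it is independent of $\varepsilon$, which is the point that will be crucial later in the asymptotic analysis. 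I do not expect a serious obstacle here: the only delicate issue is the uniformity of the constant, which hinges on the fact that the ellipticity constant $\kappa$ does not deteriorate when one passes from $v$ to $v_\varepsilon$, and this is built into Assumption \ref{ass:funct}.
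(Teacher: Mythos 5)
Your proposal is correct and takes essentially the same approach as the paper: set up the bilinear form, verify continuity and coercivity via the $m$-order Poincar\'e inequality, apply Lax--Milgram, recover the PDE by integration by parts, and obtain the energy estimate by testing with $u$ itself. The only presentational difference is that the paper spells out the equivalence between minimizing $\mathcal{J}$ and solving the weak formulation by an explicit difference-quotient computation in both directions, whereas you invoke strict convexity to shortcut that step; both are standard and equally valid.
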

See the Appendix \ref{append} for the proof of this Lemma.

We define the function $\wep := \uep-u$.  As a consequence of \autoref{le:pde}, we have 
	\begin{lemma}
	From \eqref{eq:ve_difference}, \eqref{u_strong} and \eqref{ue_strong}, function $\wep\in H^m_0(\Omega)$ is the weak solution to	
	\end{lemma}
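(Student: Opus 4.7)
The plan is routine and amounts to subtracting the two equations supplied by \autoref{le:pde}. I would test both \eqref{u_strong} and \eqref{ue_strong} against an arbitrary $\varphi\in\hzn$ in their weak formulations, which gives
\begin{equation*}
\int_\Omega u\varphi \dx + \alpha\int_\Omega v\,\nabla^m u\cdot\nabla^m\varphi \dx = \int_\Omega f\varphi \dx
\end{equation*}
and the analogous identity with $(\uep,\vep)$ in place of $(u,v)$. The fidelity term $\int_\Omega f\varphi\dx$ is common to both, so it cancels upon subtraction, leaving
\begin{equation*}
\int_\Omega \wep\varphi \dx + \alpha\int_\Omega \bigl(\vep\nabla^m\uep - v\,\nabla^m u\bigr)\cdot\nabla^m\varphi \dx = 0.
\end{equation*}

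Next I would split the bracketed term by adding and subtracting $\vep\nabla^m u$, so that $\vep\nabla^m\uep - v\,\nabla^m u = \vep\,\nabla^m\wep + (\vep - v)\nabla^m u$. By \eqref{eq:ve_difference}, $\vep - v$ equals $\kappa-1$ on $\Oe$ and vanishes elsewhere, hence the second piece is a source localized in the thin stripe. This yields the weak formulation
\begin{equation*}
\int_\Omega \wep\varphi \dx + \alpha\int_\Omega \vep\,\nabla^m\wep\cdot\nabla^m\varphi \dx = \alpha(1-\kappa)\int_{\Oe}\nabla^m u\cdot\nabla^m\varphi \dx \qquad \forall\,\varphi\in\hzn,
\end{equation*}
which corresponds, in strong form, to
\begin{equation*}
\wep + \alpha(-1)^m \divz(\vep\,\nabla^m\wep) = \alpha(-1)^{m+1}(\kappa-1)\divz\bigl(\chi_{\Oe}\nabla^m u\bigr) \quad\text{in }\Omega,
\end{equation*}
with homogeneous Dirichlet-type conditions up to order $m-1$ on $\partial\Omega$, inherited from $u,\uep\in\hzn$.

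I do not anticipate a genuine obstacle: the derivation is linear bookkeeping. The only point worth emphasizing is that every manipulation remains at the level of the weak form on $\hzn$, so no integration by parts across the low-regularity interfaces $\partial K$ or $\partial\Oe$ is ever performed. The uniform bounds $\kappa\le v,\vep\le 1$ together with the $H^m$-estimate on $u$ furnished by \autoref{le:pde} ensure that both sides of the weak identity are well defined, and the localized source $\chi_{\Oe}\nabla^m u$ is precisely the object whose size will later be quantified in the asymptotic analysis.
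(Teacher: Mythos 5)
Your subtraction-and-regrouping strategy is exactly the paper's, and the single identity you derive is correct, but the lemma asserts that $\wep$ satisfies \emph{two} weak formulations, \eqref{we_strong1} and \eqref{we_strong2}, and you have established only the second. You used the splitting $\vep\nabla^m\uep - v\nabla^m u = \vep\nabla^m\wep + (\vep-v)\nabla^m u$, which yields \eqref{EnEst_eq1}, i.e.\ the weak form of \eqref{we_strong2} with coefficient $\vep$ and source $\chi_\Oe\nabla^m u$. The companion identity \eqref{EnEst_eq1a} comes from the symmetric regrouping: add and subtract $v\nabla^m\uep$ instead, so that $\vep\nabla^m\uep - v\nabla^m u = v\nabla^m\wep + (\vep - v)\nabla^m\uep$, giving
\begin{equation*}
\int_\Omega \wep\varphi \dx + \alpha\int_\Omega v\,\nabla^m\wep\cdot\nabla^m\varphi \dx = \alpha(1-\kappa)\int_{\Oe}\nabla^m\uep\cdot\nabla^m\varphi \dx.
\end{equation*}
No new idea is needed, but the omission matters downstream: \autoref{EnEst} tests \eqref{EnEst_eq1} against $\wep$ to get the $H^m$ bound, and then tests \eqref{EnEst_eq1a} against the auxiliary function $\overline{W}_\ve$ to get the $L^2$ bound, so both versions of the weak identity are genuinely used.
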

 \begin{equation} \label{we_strong1}
  \begin{cases}
  \vspace{0.1cm}
   \wep +  \alpha\ (-1)^m \divz (v \nabla^m \wep) =
   \alpha\ (-1)^m  \divz((1-\kappa)\chi_\Oe \nabla^m \uep) &\text{ in } \Omega,\\
   \wep = \normalderivative{\wep}=\cdots=\normalderivativem{\wep}{m-1} =0 &\text{ on } \partial \Omega,
  \end{cases}
 \end{equation}
and 
	\begin{equation} \label{we_strong2}
		\begin{cases}
		\vspace{0.1cm}
		\wep +  \alpha\ (-1)^m \divz (\vep \nabla^m \wep) =
		\alpha\ (-1)^m  \divz((1-\kappa)\chi_\Oe \nabla^m u) &\text{ in } \Omega,\\
		\wep = \normalderivative{\wep}=\cdots=\normalderivativem{\wep}{m-1} =0 &\text{ on } \partial \Omega.
		\end{cases}
	\end{equation} 
\begin{proof}
	The proof follows by subtracting \eqref{u_strong} from\eqref{ue_strong} and then by adding and substracting $v\nabla^m \uep$ and $\vep\nabla^m u$ properly. In fact, it is straightforward to find that the weak formulations of the two problems \eqref{we_strong1} and \eqref{we_strong2}, are,  for all $\varphi \in \hzn$,
	\begin{equation} \label{EnEst_eq1a}
	\int_\Omega \wep \varphi  \dx
	+ \alpha\int_\Omega v \nabla^m \wep\cdot \nabla^m \varphi  \dx
	= \alpha(1-\kappa) \int_\Oe \nabla^m \uep \cdot \nabla^m\varphi \dx,
	\end{equation}
	and
	\begin{equation} \label{EnEst_eq1}
	\int_\Omega \wep \varphi \dx + \alpha \int_\Omega \vep \nabla^m \wep\cdot \nabla^m \varphi \dx
	= \alpha(1-\kappa) \int_\Oe \nabla^m u \cdot \nabla^m\varphi \dx,
	\end{equation}
	respectively. 
\end{proof}	

\subsection{Asymptotics of $\wep$}
We need the next estimates for $u$ which is a consequence of the local regularity results for polyharmonic equations which can be found in \cite{GazGruSwe10}.
\begin{lemma} \label{Reg}
Let $u$ be the solution of \eqref{u_strong}. Then $u \in H^{2m}(L_0)$ and there exists a constant $C$ such that
\begin{equation} \label{ineq1}
 \norm{\nabla^m u}_{L^\infty(L_0)} \leq C \norm{f}_{L^\infty(\Omega)}.
\end{equation}
\end{lemma}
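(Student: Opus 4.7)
The plan is to exploit the fact that on an open neighborhood of $L_0$ the coefficient $v$ is constant, so the equation \eqref{u_strong} reduces locally to a constant-coefficient polyharmonic equation with a zeroth-order term, to which classical interior regularity for $\Delta^m$ applies.

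First, I would fix an intermediate open set $\widetilde{L}_0$ with $\overline{L}_0 \subset \widetilde{L}_0 \subset \overline{\widetilde{L}_0} \subset \Omega \setminus K$ and $\operatorname{dist}(\overline{\widetilde{L}_0},\partial\Omega\cup K)\geq \delta_0/2$; such a set exists by the hypothesis \eqref{eq:lnull} on $L_0$. By \eqref{eq:v}, $v\equiv 1$ on $\widetilde{L}_0$, so $\divz(v\nabla^m u) = \Delta^m u$ there. Hence, on $\widetilde{L}_0$, the weak formulation of \eqref{u_strong} reads
\begin{equation*}
\alpha\,(-1)^m \Delta^m u \;=\; f - u \qquad \text{in } \widetilde{L}_0,
\end{equation*}
with right-hand side in $L^\infty(\widetilde{L}_0)\subset L^2(\widetilde{L}_0)$ thanks to the global energy estimate $\|u\|_{H^m(\Omega)} \leq C\|f\|_{L^\infty(\Omega)}$ established in \autoref{le:pde}.

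Next, I would invoke interior regularity for the polyharmonic operator (see e.g.\ \cite{GazGruSwe10}): any $H^m$ weak solution of $\Delta^m u = g$ with $g\in L^2$ on a smooth open set is in $H^{2m}_{\mathrm{loc}}$, with the quantitative bound
\begin{equation*}
\|u\|_{H^{2m}(L_0)} \;\leq\; C\bigl(\|u\|_{H^m(\widetilde{L}_0)} + \|f-u\|_{L^2(\widetilde{L}_0)}\bigr) \;\leq\; C\|f\|_{L^\infty(\Omega)},
\end{equation*}
where $C$ depends on $\alpha$, $\delta_0$, $m$ and the chosen cutoff between $L_0$ and $\widetilde{L}_0$. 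This yields the claim $u\in H^{2m}(L_0)$.

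Finally, I would conclude the $L^\infty$ bound on $\nabla^m u$ via the Sobolev embedding in dimension two: since $m\geq 2$ we have $2m - m = m \geq 2 > 1 = d/2$, so $H^{2m}(L_0)\hookrightarrow C^m(\overline{L_0})$ (continuously, because $L_0$ has smooth boundary). In particular,
\begin{equation*}
\|\nabla^m u\|_{L^\infty(L_0)} \;\leq\; C\,\|u\|_{H^{2m}(L_0)} \;\leq\; C\,\|f\|_{L^\infty(\Omega)},
\end{equation*}
which is \eqref{ineq1}. The only point requiring care is making sure the interior estimate is applied on strictly nested subdomains so that no boundary contribution from $\partial\Omega$ or from the interface $\partial K$ enters; this is precisely what \eqref{eq:lnull} guarantees. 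I do not expect any substantive obstacle beyond this bookkeeping, since the reduction $v\equiv 1$ on a neighborhood of $L_0$ removes the discontinuous-coefficient difficulty that would otherwise be the delicate point.
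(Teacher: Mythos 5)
Your proof is correct and follows essentially the same route as the paper's: reduce on a neighborhood of $L_0$ (where $v\equiv 1$) to the constant-coefficient polyharmonic equation $u+\alpha(-\Delta)^m u=f$, apply interior regularity from \cite{GazGruSwe10} to get $u\in H^{2m}(L_0)$ with the quantitative bound, then conclude via Sobolev embedding $H^{2m}(L_0)\hookrightarrow C^{m,\gamma}(\overline{L_0})$ (using $m\geq 2$). Your version is slightly more explicit about the nested intermediate set $\widetilde{L}_0$ needed for the interior estimate, a detail the paper leaves implicit, but the argument is otherwise the same.
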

\begin{proof}	
Since $v=1$ in $L_0$, the equation in \eqref{u_strong} is equal to 
	\begin{equation*}
		u+\alpha (-\Delta)^m u=f,\qquad \textrm{in}\,\, L_0
	\end{equation*}
hence, by interior regularity results for polyharmonic operators with smooth coefficients (see for instance
\cite{GazGruSwe10}, Theorem 2.20) it follows that  $u \in H^{2m}(L_0)$ and
\begin{equation*}
\norm{u}_{H^{2m}(L_0)} \leq C (\norm{f}_{L^\infty(\Omega)} + \norm{u}_{H^m(\Omega)}) \leq C \norm{f}_{L^\infty(\Omega)}.
\end{equation*}
Then, by using Sobolev's embedding theorem, \cite[Thm. 6.2]{Ada75}, we find that
	\begin{equation*}
		H^{2m}(L_0)\subset C^{m,\gamma}(L_0),\qquad \textrm{with}\ \ \gamma\in (0,1),
	\end{equation*}
hence
	\begin{equation*}
		\|\nabla^m u\|_{L^{\infty}(L_0)}\leq C \| u\|_{H^{2m}(L_0)}\leq C \norm{f}_{L^\infty(\Omega)}.
	\end{equation*}
\end{proof}
In the following lemma we get some asymptotic behaviour on the function $\wep$.
\begin{lemma} \label{EnEst}
Let $p,q\in\mathbb{R}$ be such that $\frac{1}{p}+\frac{1}{q}=1$, with $q\in(1,2)$ and $p\in(2,+\infty)$, and $\eta_{m,q,k}:=\frac{k}{m}(\frac{1}{q}-\frac{1}{2})$, for $k=1,\cdots,m-1$, where $\eta_{m,q,k}>0$ for every $k=1,\cdots,m-1$. Then there exists some positive constant $C$ independent of $\ve$ such that $\wep$ satisfies
\begin{equation}\label{EnEst_Hm}
 \norm{\wep}_{H^m(\Omega)} \leq C\abs{\Oe}^{\frac{1}{2}} \text{ and }
 \norm{\wep}_{H^{m-k}(\Omega)} \leq C\abs{\Oe}^{\frac{1}{2}+\eta_{m,q,k}},
\end{equation}
for every $k=1,\cdots,m$.
\end{lemma}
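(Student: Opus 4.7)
The plan is to prove the two estimates in \eqref{EnEst_Hm} in three stages: (i) a coercive energy argument for the $H^m$ bound; (ii) a duality argument exploiting interior regularity of a suitable adjoint problem to obtain the improved $L^2$ bound $\norm{\wep}_{L^2} \leq C\abs{\Oe}^{1/q}$; and (iii) standard Hilbert-space interpolation between $H^m$ and $L^2$ to recover the intermediate $H^{m-k}$ bounds.

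For the $H^m$ bound I would simply test with $\varphi = \wep$ in \eqref{EnEst_eq1}, obtaining $\norm{\wep}_{L^2}^2 + \alpha\int_\Omega \vep\abs{\nabla^m \wep}^2\dx = \alpha(1-\kappa)\int_\Oe \nabla^m u \cdot \nabla^m \wep \dx$. Since $\vep \geq \kappa$, Poincar\'e on $\hzn$ makes the LHS $\gtrsim \norm{\wep}_{H^m}^2$, while Cauchy--Schwarz combined with \autoref{Reg} bounds the RHS by $C\abs{\Oe}^{1/2}\norm{\nabla^m \wep}_{L^2}$; Young's inequality then absorbs this last factor and gives $\norm{\wep}_{H^m(\Omega)} \leq C\abs{\Oe}^{1/2}$.

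For the improved $L^2$ estimate I would work with the alternative formulation \eqref{EnEst_eq1a}, where the \emph{unperturbed} coefficient $v$ sits on the left --- crucially, $v \equiv 1$ on $L_0$. For $g \in L^2(\Omega)$, let $\psi \in \hzn$ solve the self-adjoint problem $\int\psi\varphi\dx + \alpha\int v\,\nabla^m\psi\cdot\nabla^m\varphi\dx = \int g\varphi\dx$ for all $\varphi\in\hzn$. Using $\varphi = \psi$ in \eqref{EnEst_eq1a} and $\varphi = \wep$ in the adjoint equation, subtraction yields the representation
\[
\int_\Omega \wep g \dx = \alpha(1-\kappa)\int_\Oe \nabla^m \uep \cdot \nabla^m \psi \dx.
\]
On $L_0$ the adjoint reduces to the polyharmonic equation $\psi + \alpha(-\Delta)^m\psi = g$, and the interior regularity argument of \autoref{Reg} (via \cite{GazGruSwe10} and 2D Sobolev embedding) delivers $\norm{\nabla^m \psi}_{L^\infty(\Oe)} \leq C\norm{g}_{L^2}$. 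For the other factor I would apply the higher-order Meyers-type $L^p$-regularity of Barton \cite{Bar16} to $\uep$: because the ellipticity of $\vep$ is $\ve$-independent, the corresponding Meyers exponent $p>2$ and constant are $\ve$-uniform, so $\norm{\nabla^m \uep}_{L^p(\Omega)} \leq C$. One H\"older step (with $1/p+1/q=1$) then combines these into
\[
\abs{\int_\Oe \nabla^m \uep \cdot \nabla^m \psi \dx} \leq \norm{\nabla^m \uep}_{L^p(\Oe)}\,\abs{\Oe}^{1/q}\,\norm{\nabla^m\psi}_{L^\infty(\Oe)} \leq C\abs{\Oe}^{1/q}\norm{g}_{L^2},
\]
and taking the supremum over $\norm{g}_{L^2}=1$ delivers $\norm{\wep}_{L^2}\leq C\abs{\Oe}^{1/q}$, which is exactly the $k=m$ case.

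Finally, for $k\in\{1,\dots,m-1\}$ I would extend $\wep$ by zero to $\R^2$ (legitimate since $\wep\in\hzn$) and apply the classical Sobolev interpolation $\norm{w}_{H^{m-k}} \leq C\norm{w}_{L^2}^{k/m}\norm{w}_{H^m}^{(m-k)/m}$ to combine the bounds from the first two steps, obtaining $\norm{\wep}_{H^{m-k}} \leq C\abs{\Oe}^{(k/m)/q + ((m-k)/m)/2} = C\abs{\Oe}^{1/2+\eta_{m,q,k}}$. The main obstacle is step (ii): it is the choice of formulation \eqref{EnEst_eq1a} over \eqref{EnEst_eq1} that makes the duality work, by transferring the regularity burden from $\uep$ (where only the Meyers $L^p$-bound is available) to $\psi$ (whose interior $L^\infty$-gradient bound is automatic because $v$ is constant on $L_0$); the $\ve$-uniformity of the Meyers constants for $\uep$, which follows from the $\ve$-independent ellipticity of $\vep$, is essential.
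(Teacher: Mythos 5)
Your overall strategy — an energy estimate for the $H^m$ bound, a duality argument with an adjoint problem (posed with the \emph{unperturbed} coefficient $v$, constant on $L_0$) to get the improved $L^2$ bound, and interpolation for the intermediate $H^{m-k}$ bounds — is exactly the one the paper follows. The gap is in your step~(ii), in how the factor involving $\nabla^m\uep$ is estimated.

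You invoke a Meyers-type $L^p$ bound $\norm{\nabla^m\uep}_{L^p(\Omega)}\leq C$ uniform in $\ve$, citing \cite{Bar16}. Two problems. First, the Meyers exponent, where available, is a fixed $p^*>2$ depending on the ellipticity (i.e.\ on $\kappa$) and on $m$, which pins $q=p/(p-1)$ close to $2$; but the lemma asserts the bound for \emph{every} $q\in(1,2)$, and $\eta_{m,q,k}\to 0$ as $q\to 2^-$, so your argument would only yield a weaker statement on a restricted $q$-range. Second, what is needed here is a global (or at least interior-on-$L_0$) $\ve$-uniform $W^{m,p}$ estimate for the $2m$-order Dirichlet boundary-value problem with the rough coefficient $\vep$; the results of \cite{Bar16} invoked later in this paper concern fundamental solutions and interior estimates on $\R^n$, and you do not identify a theorem that supplies the boundary-value version. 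Neither issue is fatal to the downstream use (which only needs \emph{some} $\eta>0$), but as a proof of the lemma as stated it falls short.

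The paper sidesteps Meyers entirely by writing $\nabla^m\uep=\nabla^m u+\nabla^m\wep$ inside the duality identity: $\norm{\nabla^m u}_{L^q(\Oe)}\leq\norm{\nabla^m u}_{L^\infty(L_0)}\abs{\Oe}^{1/q}$ by \autoref{Reg}, and $\norm{\nabla^m\wep}_{L^q(\Oe)}\leq\norm{\nabla^m\wep}_{L^2(\Oe)}\abs{\Oe}^{1/q-1/2}\leq C\abs{\Oe}^{1/q}$ by H\"older and the already-proved $H^m$ bound. This gives $\norm{\nabla^m\uep}_{L^q(\Oe)}\leq C\abs{\Oe}^{1/q}$ for \emph{every} $q\in(1,2)$, with nothing but Cauchy--Schwarz and interior regularity of $u$. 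The adjoint factor $\nabla^m\overline{W}_\ve$ is then placed in the dual $L^p(L_0)$, $p\in(2,\infty)$, via interior $H^{2m}$ regularity of the constant-coefficient polyharmonic adjoint and Sobolev embedding (valid in $\R^2$ for $m\geq 2$). Replacing your Meyers step with this $\uep=u+\wep$ split gives the full range of $q$ and removes the dependence on an unestablished boundary regularity result. Your steps (i) and (iii) are correct and match the paper.
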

\begin{proof}
from \eqref{EnEst_eq1}, using the test function $\varphi=\wep$ (which is an element of $\hzn$ according to \autoref{le:pde}), we obtain
\begin{equation*}
\int_\Omega \wep^2  \dx
+ \alpha\int_\Omega v_{\ve} \abs{\nabla^m \wep}^2  \dx
= \alpha(1-\kappa)\int_\Oe \nabla^m u \cdot \nabla^m \wep\dx.
\end{equation*}
Application of the Cauchy-Schwarz inequality, the use of \eqref{ineq1}, \eqref{eq:ve} and the fact that $\Oe \subset\subset L_0$
(see \autoref{fig:L}), then give
\begin{equation*}
 \begin{aligned}
 \kappa \norm{\nabla^m \wep}_{L^2(\Omega)}^2
 &\leq (1-\kappa) \int_{\Oe} \nabla^m u \cdot \nabla^m \wep\dx
  \leq (1-\kappa) \norm{\nabla^m u}_{L^\infty(\Oe)} \int_\Oe  \abs{\nabla^m \wep} \dx\\
 &\leq C  \norm{\nabla^m \wep}_{L^2(\Omega)} \abs{\Oe}^{\frac{1}{2}}.
 \end{aligned}
\end{equation*}
Thus, there exists a positive constant $C$ such that
\begin{equation*}
\norm{\nabla^m \wep}_{L^2(\Omega)} \leq C \abs{\Oe}^{\frac{1}{2}},
\end{equation*}
and thus the first estimate in \eqref{EnEst_Hm} follows by means of the Poincar\'e's inequality, see \autoref{le:poincare}.

To prove the second inequality in \eqref{EnEst_Hm}, we consider an auxiliary function $\overline{W}_{\ve}$ which satisfies
	\begin{equation}\label{eq:We}
	\begin{cases}
	  \vspace{0.1cm}
		\overline{W}_{\ve} + \alpha\ (-1)^m (\nabla\cdot)^m(v\nabla^m \overline{W}_{\ve})=w_\ve,& \textrm{in}\ \ \Omega,\\
		\overline{W}_{\ve}=\normalderivative{\overline{W}_{\ve}}=\cdots=\normalderivativem{\overline{W}_{\ve}}{m-1}=0 & \textrm{on}\ \ \partial\Omega,
	\end{cases}
	\end{equation}
for which the weak solution is given by
\begin{equation*}
\int_\Omega \overline{W}_\ve\varphi+\alpha\int_\Omega v\nabla^m\overline{W}_\ve\cdot\nabla^m\varphi=\int_\Omega \wep\varphi,
\end{equation*}
for all $\varphi\in H^m_0(\Omega)$.
Inserting $\varphi=\wep$ into last equation, and $\varphi=\overline{W}_\ve$ into \eqref{EnEst_eq1a} and then subtracting the resulting equations, we find 
\begin{equation*}
\int_\Omega w^2_\ve \dx = \alpha(1-\kappa)\int_\Oe \nabla^m\uep \cdot \nabla^m\overline{W}_\ve \dx.
\end{equation*}
To estimate the left-hand side of the previous equation, we apply H\"{o}lder inequality on the term on the right-hand side. With this aim, we first observe that $\overline{W}_\ve$ is more regular in $\Omega_\ve$ because it is the solution of a polyharmonic operator with constant coefficients. We explain carefully this fact later, in \eqref{eq:gradmWe}. By hypothesis, we choose $p$ and $q$ such that
\begin{equation}
\label{eq:eta}
p \in (2,+\infty),\,\,  q\in(1,2)\ \ \text{and}\ \ \frac{1}{p} + \frac{1}{q}=1,
\end{equation}
hence
\begin{equation}\label{sum}
 \begin{aligned}
  \norm{\wep}_{L^2(\Omega)}^2 &\leq \alpha(1-\kappa) \norm{\nabla^m \uep}_{L^q(\Oe)} \norm{\nabla^m \overline{W}_\ve}_{L^p(\Oe)} \\
  & \leq \alpha(1-\kappa) \left(\norm{\nabla^m \wep}_{L^q(\Oe)}+\norm{\nabla^m u}_{L^q(\Oe)} \right) \norm{\nabla^m \overline{W}_\ve}_{L^p(\Oe)}.
 \end{aligned}
\end{equation}
Now, we estimate the terms on the right-hand side of \eqref{sum}.\ \\
\textit{Estimate of $\norm{\nabla^m u}_{L^q(\Oe)}$}. We apply \autoref{Reg}, in fact since $\Oe \subset\subset L_0$
(cf. \autoref{fig:L}) it follows that there exists some constant $C$ such that
\begin{equation}\label{eq:gradmu}
  \norm{\nabla^m u}_{L^q(\Oe)}^q \leq \norm{\nabla^m u}_{L^\infty(L_0)}^q \left(\int_\Oe  \dx \right)
  \leq C \abs{\Oe},\qquad \textrm{that is}\qquad  \norm{\nabla^m u}_{L^q(\Oe)}\leq C \abs{\Oe}^{\frac{1}{q}}.
\end{equation}
\textit{Estimate of $\norm{\nabla^m \wep}_{L^q(\Oe)}$}. Using again H\"older's inequality with $r=\frac{2}{q} \in (1,2)$ and $s = \frac{2}{2-q}$ it follows that
\begin{equation*} 
\begin{aligned}
 \norm{\nabla^m \wep}_{L^q(\Oe)}^q &= \int_\Oe \abs{\nabla^m \wep}^q  \dx
 \leq \left(\int_\Oe \abs{\nabla^m \wep}^2\dx	\right)^{\frac{q}{2}}
       \abs{\Oe}^{\frac{1}{s}}=\|\nabla^m \wep\|^q_{L^2(\Oe)} |\Oe|^{1-\frac{q}{2}}\\
   \end{aligned}
\end{equation*}
which then together with the first, already proven, inequality in \eqref{EnEst_Hm} gives
\begin{equation}
\label{eq:n2wq}
\norm{\nabla^m \wep}_{L^q(\Oe)} \leq \norm{\nabla^m \wep}_{L^2(\Oe)}
       \abs{\Oe}^{\frac{1}{q} - \frac{1}{2}} \leq C \abs{\Oe}^{\frac{1}{q}}.
\end{equation}
\textit{Estimate of $\norm{\nabla^m \overline{W}_\ve}_{L^p(\Oe)}$}. As $\Oe\subset\subset L_0$ and $v=1$ in $L_0$, for the estimate of this term we can apply the local regularity results for polyharmonic equations with constant coefficients, see \cite{GazGruSwe10}. Indeed, since $\overline{W}_\ve$ is the weak solution of the equation \eqref{eq:We} and $w_\ve\in L^2(\Omega)$, we have that $\overline{W}_\ve\in H^{2m}(L_0)$, hence
	\begin{equation*}
	 \norm{\overline{W}_\ve}_{H^{2m}(L_0)} \leq C \norm{\wep}_{L^2(\Omega)}.
	\end{equation*}
Finally, applying the Sobolev Embedding Theorem, see \cite{Ada75,GazGruSwe10}, we find that	
	\begin{equation}\label{eq:gradmWe}
	\norm{\nabla^m\overline{W}_\ve}_{L^p(L_0)}\leq \|\overline{\Wep}\|_{W^{m,p}(L_0)} \leq C \|\overline{\Wep}\|_{H^{2m}(L_0)}\leq C \norm{\wep}_{L^2(\Omega)}.
	\end{equation}
	
Therefore, inserting \eqref{eq:gradmu}, \eqref{eq:n2wq} and \eqref{eq:gradmWe} into \eqref{sum}, we get that there exists a positive constant $C$ such that
\begin{equation} \label{EnEst_L2}
  \norm{\wep}^2_{L^2(\Omega)} \leq C \abs{\Oe}^{\frac{1}{q}}\norm{w_\ve}_{L^2(\Omega)},
  \end{equation}
which finally implies that
\begin{equation}\label{qest}
 \norm{\wep}_{L^2(\Omega)} \leq C \abs{\Oe}^{\frac{1}{q}}.
\end{equation}
So far we have found the estimate of $\wep$ in $H^m(\Omega)$ and $L^2(\Omega)$. Finally, the assertion of the theorem, i.e., the second inequality of \eqref{EnEst_Hm} follows by the application of classical interpolation inequalities in Sobolev spaces, see \cite{LioMag72a}, with the results \eqref{qest} and the first inequality in \eqref{EnEst_Hm}. Indeed, for every $k=1,\cdots,m-1$, we have that
	\begin{equation*}
		\|\wep\|_{H^{m-k}}\leq C \|\wep\|^{1-\frac{k}{m}}_{H^m(\Omega)}\|\wep\|^{\frac{k}{m}}_{L^2(\Omega)}\leq C |\Oe|^{\frac{1}{2}+\frac{k}{m}(\frac{1}{q}-\frac{1}{2})},\qquad k=1,\cdots,m-1
	\end{equation*}
which gives, together with \eqref{qest}, the assertion of the theorem.
\end{proof}
\begin{remark}
	For $q$ and $m$ fixed, it is straightforward to observe that the following relations between $\eta_{m,q,k}$ hold: $\eta_{m,q,1}\leq \eta_{m,q,2}\leq \cdots\leq \eta_{m,q,m-1}$. 
\end{remark}
We define some auxiliary functions.
\begin{definition}\label{de:vij}
In the sequel we use the following notation: $\bm{i}=(i_1,i_2,\cdots,i_m)$, where $i_k=1,2$, for $k=1,\cdots,m$ and $x_{\bm{i}}=x_{i_{1}}\cdots x_{i_{m}}$. We denote the polynomial of degree $m$ and in the variables $x_1$ and $x_2$ by
 \begin{equation}\label{Vij_formel}
 V^{i_1\cdots i_m}(x) := \frac{1}{m!} x_{i_{1}}\cdots x_{i_{m}}=\frac{1}{m!} x_{\bm{i}}
 \ \  \text{ for }\ \ x\in\Omega.
 \end{equation}
To shorten the notation we define $V^{\bm{i}}:=V^{i_1\cdots i_m}$. \\ 
These functions satisfy
   	\begin{equation}\label{eq:Vi}
   		\divz(\nabla^m V^{{\bm{i}}})= 0 \text{ in } \Omega,
	\end{equation}

Moreover, we denote by $V_\ve^{{\bm{i}}}$ the solution of
\begin{equation} \label{Vijeps_problem}
 \begin{cases}
 \vspace{0.2cm}
  \divz(\gep \nabla^m V_\ve^{{\bm{i}}}))=0 &\text{ in } \Omega,\\
  \vspace{0.2cm}
  V_\ve^{{\bm{i}}} = V^{{\bm{i}}} & \text{ on } \partial \Omega\\
  \normalderivative{V_\ve^{{\bm{i}}}}= \normalderivative{V^{{\bm{i}}}},\cdots,\normalderivativem{V_\ve^{\bm{i}}}{m-1}=\normalderivativem{V^{\bm{i}}}{m-1} &\text{ on } \partial \Omega.
 \end{cases}
\end{equation}
where
 \begin{equation}\label{gammaeps}
           \gep(x)=\begin{cases}
                    \kappa& x\in  \Oe(y,\tau),\\
                        1 & \text{ otherwise.}
                    \end{cases}
        \end{equation}
\end{definition}
Consistently we denote
 \begin{equation}\label{gammanull}
  \gamma_0=1 \text{ in } \Omega.
 \end{equation}
In the following, we provide estimates for the functions $V_\ve^{{\bm{i}}}$ and $V^{{\bm{i}}}$.
\begin{lemma}
\label{EnEstV}
Under the assumptions of Lemma \ref{EnEst}, there exists some constant $C$ independent of $\ve$ such that
\begin{equation}\label{EnH2V}
  \norm{V_\ve^{{\bm{i}}}-V^{{\bm{i}}}}_{H^m(\Omega)} \leq C\abs{\Oe}^{\frac{1}{2}} \text{ and }
  \norm{V_\ve^{{\bm{i}}}-V^{{\bm{i}}}}_{H^{m-k}(\Omega)} \leq C\abs{\Oe}^{\frac{1}{2}+\eta_{m,q,k}},
\end{equation}
for any $k=1,\cdots,m$.
\end{lemma}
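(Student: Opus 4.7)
The plan is to mirror the argument used for \autoref{EnEst}, applied now to the function $Z_\ve:=V_\ve^{\bm{i}}-V^{\bm{i}}\in\hzn$ (it lies in $\hzn$ because, by \eqref{Vijeps_problem}, $V_\ve^{\bm{i}}$ and $V^{\bm{i}}$ share the same traces on $\partial\Omega$ up to order $m-1$). Since $V^{\bm{i}}$ is a polynomial of degree $m$, the tensor $\nabla^m V^{\bm{i}}$ is constant, so $\divz(\nabla^m V^{\bm{i}})=0$ and consequently $\int_\Omega\nabla^m V^{\bm{i}}\cdot\nabla^m\varphi\dx=0$ for every $\varphi\in\hzn$. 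Subtracting $\int_\Omega\gep\nabla^m V^{\bm{i}}\cdot\nabla^m\varphi\dx$ from the weak form of \eqref{Vijeps_problem} and exploiting $\gep-1=(\kappa-1)\chi_{\Oe}$, I obtain the analog of \eqref{EnEst_eq1},
\begin{equation*}
\int_\Omega\gep\nabla^m Z_\ve\cdot\nabla^m\varphi\dx=(1-\kappa)\int_{\Oe}\nabla^m V^{\bm{i}}\cdot\nabla^m\varphi\dx,\qquad\forall\,\varphi\in\hzn.
\end{equation*}
Testing with $\varphi=Z_\ve$, using $\gep\geq\kappa$, Cauchy--Schwarz on the right-hand side, and the fact that $|\nabla^m V^{\bm{i}}|$ is an absolute constant, yields $\|\nabla^m Z_\ve\|_{L^2(\Omega)}\leq C|\Oe|^{1/2}$, and the Poincar\'e inequality (\autoref{le:poincare}) delivers the first estimate in \eqref{EnH2V}.

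For the refined $H^{m-k}$ bounds I would run a duality argument, with one critical twist in the choice of auxiliary function. I would let $\overline{W}_\ve\in\hzn$ be the unique weak solution of the \emph{pure} polyharmonic problem $(-\Delta)^m\overline{W}_\ve=Z_\ve$ in $\Omega$ with homogeneous Dirichlet data up to order $m-1$, rather than the $v$-weighted equation \eqref{eq:We} employed in \autoref{EnEst}. Testing its weak form against $Z_\ve$ and the identity above against $\overline{W}_\ve$, and using once more $\gep-1=(\kappa-1)\chi_\Oe$, a short manipulation collapses to
\begin{equation*}
\|Z_\ve\|_{L^2(\Omega)}^2=(1-\kappa)\int_{\Oe}\nabla^m V_\ve^{\bm{i}}\cdot\nabla^m\overline{W}_\ve\dx.
\end{equation*}
Because $\overline{W}_\ve$ solves a constant-coefficient polyharmonic equation on all of $\Omega$ and $\Oe\subset\subset L_0$ is well separated from $\partial\Omega$, classical interior regularity (\cite[Thm.~2.20]{GazGruSwe10}) together with the two-dimensional Sobolev embedding $H^{2m}(L_0)\hookrightarrow W^{m,p}(L_0)$ produces $\|\nabla^m\overline{W}_\ve\|_{L^p(\Oe)}\leq C\|Z_\ve\|_{L^2(\Omega)}$. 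Splitting $\nabla^m V_\ve^{\bm{i}}=\nabla^m V^{\bm{i}}+\nabla^m Z_\ve$ and applying H\"older with exponents $(p,q)$ as in \eqref{eq:eta}, the term involving $\nabla^m V^{\bm{i}}$ contributes $O(|\Oe|^{1/q})$ because the tensor is bounded, while $\|\nabla^m Z_\ve\|_{L^q(\Oe)}\leq\|\nabla^m Z_\ve\|_{L^2(\Oe)}|\Oe|^{1/q-1/2}\leq C|\Oe|^{1/q}$ by the $H^m$ bound just proved. Absorbing $\|Z_\ve\|_{L^2(\Omega)}$ gives $\|Z_\ve\|_{L^2(\Omega)}\leq C|\Oe|^{1/q}$, which is exactly the case $k=m$; the remaining cases $k=1,\dots,m-1$ follow by the same Sobolev interpolation (\cite{LioMag72a}) used at the end of \autoref{EnEst}.

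The delicate step, as I see it, is the choice of $\overline{W}_\ve$ in the duality argument. A naive copy of the $v$-weighted auxiliary problem from \autoref{EnEst} fails here because $v$ and $\gep$ disagree on \emph{both} $K$ and $\Oe$, so subtracting the two weak forms would leave an uncontrolled global cross term on $K$ that cannot be made small. Replacing $v$ by $\gep$ in the auxiliary equation preserves the cancellation but destroys the interior regularity needed for the $L^p$ bound on $\Oe$, since $\gep$ is itself discontinuous inside $L_0$. Using the plain polyharmonic operator, whose coefficient $\gamma_0=1$ is smooth throughout $L_0$, circumvents both difficulties at once: interior regularity is available on all of $L_0$, and the jump of $\gep$ on $\Oe$ enters only through the already-small right-hand side of the identity displayed above.
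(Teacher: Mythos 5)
Your proof is correct and carries out the strategy the paper indicates (the paper only says ``similar to the proof of Lemma \ref{EnEst}'' and gives no details). The one place requiring judgment — replacing the $v$-weighted auxiliary operator of Lemma \ref{EnEst} by the pure polyharmonic operator $(-\Delta)^m$ — is in fact the natural translation rather than a departure, since $V^{\bm i}$ and $V_\ve^{\bm i}$ solve equations with coefficients $\gamma_0\equiv 1$ and $\gamma_\ve$ (not $v$ and $v_\ve$, and without a zeroth-order term); your analysis of why the literal $v$-weighted copy would leave a cross term on $K$ of size $|\Oe|^{1/2}\|Z_\ve\|_{L^2}$ (hence no gain over the $H^m$ bound) and why a $\gamma_\ve$-weighted choice would spoil interior regularity inside $L_0$ is sound, and the duality identity $\|Z_\ve\|_{L^2}^2=(1-\kappa)\int_\Oe\nabla^m V_\ve^{\bm i}\cdot\nabla^m\overline W_\ve\,dx$, the H\"older splitting, and the interpolation step all check out.
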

The proof is similar to the proof of \autoref{EnEst}.

\begin{lemma} \label{Equality}
Let $\eta_{m,q,1}$ be defined as in Lemma \ref{EnEst}. Then, for all test functions $\phi \in C_0^m(L_0)$ the following identity holds:
\begin{equation} \label{eq:approx_weak}
\abs{\int_\Oe (\nabla^m\uep \cdot \nabla^m V^{{\bm{i}}}) \phi \dx - \int_\Oe (\nabla^m V_\ve^{{\bm{i}}}\cdot \nabla^m u) \phi \dx}
= \mathcal{O}(\abs{\Oe}^{1+\eta_{m,q,1}}).
\end{equation}
\end{lemma}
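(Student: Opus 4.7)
The first step is to insert $\uep=u+\wep$ into the left-hand side of \eqref{eq:approx_weak}; the cross term $\int_\Oe\phi\,\nabla^m u\cdot\nabla^m V^{\bm{i}}\dx$ cancels and reduces the claim to showing that
\begin{equation*}
\int_\Oe \phi\,\nabla^m\wep\cdot\nabla^m V^{\bm{i}}\dx - \int_\Oe \phi\,\nabla^m(V_\ve^{\bm{i}}-V^{\bm{i}})\cdot\nabla^m u\dx = \mathcal{O}(\abs{\Oe}^{1+\eta_{m,q,1}}).
\end{equation*}
To produce this almost-equality I will compare two weak identities. Since $V^{\bm{i}}$ is a polynomial of degree $m$, its $m$-fold divergence vanishes and $\int_\Omega \nabla^m V^{\bm{i}}\cdot\nabla^m\psi\dx=0$ for every $\psi\in\hzn$; subtracting this from the weak form of \eqref{Vijeps_problem} and using $V_\ve^{\bm{i}}-V^{\bm{i}}\in\hzn$ yields
\begin{equation*}
\textrm{(A)}\qquad \int_\Omega \gep\,\nabla^m(V_\ve^{\bm{i}}-V^{\bm{i}})\cdot\nabla^m\psi\dx = (1-\kappa)\int_\Oe \nabla^m V^{\bm{i}}\cdot\nabla^m\psi\dx,
\end{equation*}
while the companion identity is \eqref{EnEst_eq1} itself,
\begin{equation*}
\textrm{(B)}\qquad \int_\Omega \wep\,\psi\dx + \alpha\int_\Omega \vep\,\nabla^m\wep\cdot\nabla^m\psi\dx = \alpha(1-\kappa)\int_\Oe \nabla^m u\cdot\nabla^m\psi\dx.
\end{equation*}

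Next I insert $\psi=\phi\wep\in\hzn$ into (A) and $\psi=\phi(V_\ve^{\bm{i}}-V^{\bm{i}})\in\hzn$ into (B). The Leibniz rule produces $\nabla^m(\phi w)=\phi\,\nabla^m w+R(\phi,w)$, where the commutator $R(\phi,w)$ is a tensor of order $m$ involving only derivatives of $w$ of order $\leq m-1$ paired with smooth factors coming from $\phi$. The principal parts of the two left-hand sides then become $\int_\Omega\gep\,\phi\,\nabla^m(V_\ve^{\bm{i}}-V^{\bm{i}})\cdot\nabla^m\wep\dx$ and $\alpha\int_\Omega\vep\,\phi\,\nabla^m\wep\cdot\nabla^m(V_\ve^{\bm{i}}-V^{\bm{i}})\dx$. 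The key observation is that $\supp\phi\subset L_0$ is disjoint from $K$, so $\gep\equiv\vep$ on $\supp\phi$; after dividing (B) by $\alpha$, these two integrals coincide. Equating the resulting expressions and isolating the target difference yields
\begin{equation*}
(1-\kappa)\!\int_\Oe\!\phi\,\nabla^m V^{\bm{i}}\cdot\nabla^m\wep\dx - (1-\kappa)\!\int_\Oe\!\phi\,\nabla^m u\cdot\nabla^m(V_\ve^{\bm{i}}-V^{\bm{i}})\dx = \mathcal{R},
\end{equation*}
where $\mathcal{R}$ gathers four commutator terms built from $R(\phi,\wep)$ and $R(\phi,V_\ve^{\bm{i}}-V^{\bm{i}})$ together with the zero-order remainder $\tfrac{1}{\alpha}\int_\Omega\wep\,\phi\,(V_\ve^{\bm{i}}-V^{\bm{i}})\dx$.

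Finally, each piece of $\mathcal{R}$ is estimated by Cauchy--Schwarz. For the two commutators spread over $\Omega$, I pair a top-order factor of size $\norm{\cdot}_{H^m(\Omega)}\leq C\abs{\Oe}^{1/2}$ (by \autoref{EnEst} and \autoref{EnEstV}) with a commutator factor bounded in $L^2(\Omega)$ by the $H^{m-1}(\Omega)$ norm of the second entry, which is of size $C\abs{\Oe}^{1/2+\eta_{m,q,1}}$ via the $k=1$ instances of \eqref{EnEst_Hm} and \eqref{EnH2V}. For the two commutators localized on $\Oe$, I use that $\nabla^m V^{\bm{i}}$ is a constant tensor and $\norm{\nabla^m u}_{L^\infty(L_0)}\leq C$ by \autoref{Reg}; a Cauchy--Schwarz on $\Oe$ again produces the product $\abs{\Oe}^{1/2}\cdot\abs{\Oe}^{1/2+\eta_{m,q,1}}$. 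The zero-order remainder is $\mathcal{O}(\abs{\Oe}^{2/q})$, which is no larger than $\abs{\Oe}^{1+\eta_{m,q,1}}$ because $(2-q)/q\geq\eta_{m,q,1}=\tfrac{1}{m}(\tfrac1q-\tfrac12)$. Summing, $\abs{\mathcal{R}}=\mathcal{O}(\abs{\Oe}^{1+\eta_{m,q,1}})$ as required. The main obstacle is spotting the compensation mechanism itself: because $\gep=\vep$ on $\supp\phi$, the leading terms of two a priori independent weak formulations agree, and this is precisely what converts the difference of the two target integrals from an uncontrolled $\mathcal{O}(\abs{\Oe})$ quantity into the sharper bound governed by the interpolation gain $\eta_{m,q,1}$ already obtained for $\wep$ and $V_\ve^{\bm{i}}-V^{\bm{i}}$.
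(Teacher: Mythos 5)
Your proof is correct and rests on the same underlying mechanism as the paper's: cross-test two weak formulations with $\phi$-localized test functions so that the leading-order bilinear terms cancel, and then estimate the leftover commutator and zero-order terms via the interpolation bounds of \autoref{EnEst} and \autoref{EnEstV}. The organization, however, is genuinely different. The paper inserts $\varphi=V^{\bm{i}}\phi$ and $\varphi=V_\ve^{\bm{i}}\phi$ into the two weak formulations \eqref{EnEst_eq1a}, \eqref{EnEst_eq1} of $\wep$, subtracts, and then invokes the $V^{\bm{i}}$- and $V_\ve^{\bm{i}}$-equations tested against $\wep\phi$ to trade top-order $\wep$ derivatives for lower-order ones, followed by adding and subtracting $V^{\bm{i}}$. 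You instead first write $\uep = u + \wep$ to cancel the constant cross term $\int_\Oe\phi\,\nabla^m u\cdot\nabla^m V^{\bm{i}}$, then work with only two weak identities — (A) for $V_\ve^{\bm{i}}-V^{\bm{i}}$ with test $\phi\wep$ and (B) for $\wep$ with test $\phi(V_\ve^{\bm{i}}-V^{\bm{i}})$ — and observe that the principal parts coincide since $\gep\equiv\vep$ on $\supp\phi\subset L_0$. This makes the cancellation mechanism more transparent and reduces the amount of algebra; the remainder you obtain (four commutator integrals plus one zero-order term) is estimated exactly as in the paper, by pairing an $H^m$-size factor $\sim\abs{\Oe}^{1/2}$ with an $H^{m-1}$-size factor $\sim\abs{\Oe}^{1/2+\eta_{m,q,1}}$. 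A small plus of your write-up is that you explicitly verify $2/q - 1 = 2(\tfrac1q-\tfrac12)\geq\tfrac1m(\tfrac1q-\tfrac12)=\eta_{m,q,1}$ for the zero-order remainder, a point the paper folds into its "lengthy check."
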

\begin{proof}
We first observe that $\gamma_\ve=v_\ve$ in $L_0$. We use the weak formulations \eqref{EnEst_eq1a} and \eqref{EnEst_eq1} of $\wep$ where we choose $\varphi=V^{\bm{i}}\phi$ and $\varphi=V_\ve^{\bm{i}}\phi$, respectively, i.e., 
    \begin{equation} \label{EnEstV_eq1a}
    \int_\Omega \wep V^{\bm{i}}\phi  \dx
    + \alpha\int_\Omega v \nabla^m \wep\cdot \nabla^m (V^{\bm{i}}\phi)  \dx
    = \alpha(1-\kappa) \int_\Oe \nabla^m \uep \cdot \nabla^m(V^{\bm{i}}\phi) \dx,
    \end{equation}
    and
    \begin{equation} \label{EnEstVe_eq1}
    \int_\Omega \wep V_\ve^{\bm{i}}\phi \dx + \alpha \int_\Omega \vep \nabla^m \wep\cdot \nabla^m(V_\ve^{\bm{i}}\phi) \dx
    = \alpha(1-\kappa) \int_\Oe \nabla^m u \cdot \nabla^m(V_\ve^{\bm{i}}\phi) \dx.
    \end{equation}
Then subtracting \eqref{EnEstV_eq1a} from \eqref{EnEstVe_eq1}, and since $\phi$ has compact support in $L_0$, we find
	\begin{equation*}
	\begin{aligned}
		\int_{L_0}\wep (V^{\bm{i}}_\ve-V^{\bm{i}})\phi\, dx &+\alpha\int_{L_0}\vep \nabla^m \wep \cdot \nabla^m(V^{\bm{i}}_\ve\phi)\, dx - \alpha\int_{L_0} \nabla^m \wep \cdot \nabla^m(V^{\bm{i}}\phi)\, dx\\
		&=\alpha(1-\kappa)\int_{\Omega_\ve}\left[\nabla^m u\cdot \nabla^m(V^{\bm{i}}_\ve\phi)-\nabla^m\uep \cdot \nabla^m(V^{\bm{i}}\phi)\,\right] dx. 
	\end{aligned}	
	\end{equation*}
In the last expression we explicit the terms containing the derivative of maximum order of all the functions, i.e.,
	\begin{equation}\label{eq:Ve-V}
		\begin{aligned}
			&\int_{L_0}\wep (V^{\bm{i}}_\ve -V^{\bm{i}})\phi\, dx+\alpha\int_{L_0}\vep\nabla^m \wep \cdot \nabla^m V^{\bm{i}}_\ve \phi\, dx + \alpha \int_{L_0}\vep \nabla^m \wep \cdot \nabla^m \phi V^{\bm{i}}_\ve\, dx-\alpha\int_{L_0}\nabla^m \wep \cdot \nabla^m V^{\bm{i}}\phi\,dx \\
			&+\alpha\int_{L_0}\vep \nabla^m \wep \cdot\left[\sum_{n=1}^{m-1}\nabla^{m-1-n}\left(\nabla^nV^{\bm{i}}_\ve \otimes \nabla \phi + \nabla^n\phi \otimes \nabla V^{\bm{i}}_\ve\right)\right]\, dx	-\alpha \int_{L_0}\nabla^m \wep \cdot \nabla^m \phi V^{\bm{i}}\, dx\\
			&-\alpha\int_{L_0}\nabla^m \wep \cdot \left[\sum_{n=1}^{m-1}\nabla^{m-1-n}\left(\nabla^nV^{\bm{i}} \otimes \nabla \phi + \nabla^n\phi \otimes \nabla V^{\bm{i}}\right)\right]\, dx\\
			&=\alpha(1-\kappa)\Bigg\{\int_{\Omega_\ve}\nabla^m u \cdot \nabla^m V^{\bm{i}}_\ve \phi\, dx + \int_{\Omega_\ve}\nabla^m u \cdot \nabla^m \phi V^{\bm{i}}_\ve\, dx -\int_{\Omega_\ve}\nabla^m \uep \cdot \nabla^m V^{\bm{i}} \phi\, dx - \int_{\Omega_\ve} \nabla^m \uep \cdot \nabla^m\phi V^{\bm{i}}\, dx \\
			&+\int_{\Omega_\ve}\nabla^m u\cdot \left[\sum_{n=1}^{m-1}\nabla^{m-1-n}\left(\nabla^nV^{\bm{i}}_\ve \otimes \nabla \phi + \nabla^n\phi \otimes \nabla V^{\bm{i}}_\ve\right)\right]\, dx\\
			&-\int_{L_0}\nabla^m \uep \cdot \left[\sum_{n=1}^{m-1}\nabla^{m-1-n}\left(\nabla^nV^{\bm{i}} \otimes \nabla \phi + \nabla^n\phi \otimes \nabla V^{\bm{i}}\right)\right]\, dx
			\Bigg\}		
		\end{aligned}
	\end{equation}     
We next use the weak formulation of the equations satisfied by $V^{\bm{i}}$ and $V^{\bm{i}}_\ve$, see \eqref{eq:Vi} and \eqref{Vijeps_problem}, in $L_0$. The idea is to find an expression of the terms in $\wep$ with the maximum order of derivation, which are in the left-hand side of the previous formula, in terms of the derivatives of lower order with respect to $\wep$. Specifically, for all test functions $\wep \phi$, we have
	\begin{equation*}
			\int_{L_0} v_\ve \nabla^m V^{\bm{i}}_\ve \cdot \nabla^m(\wep\phi)\, dx=0,\qquad \textrm{and}\qquad \int_{L_0}\nabla^m V^{\bm{i}} \cdot \nabla^m (\wep\phi)\, dx=0,
	\end{equation*} 
hence, expliciting the terms $\nabla^m(\wep\phi)$, we get for the first equation
	\begin{equation}\label{eq:gradmVe}
	\begin{aligned}
		\int_{L_0} v_\ve \nabla^m V^{\bm{i}}_\ve \cdot \nabla^m \wep \phi\, dx&=-\int_{L_0} v_\ve \nabla^m V^{\bm{i}}_\ve \cdot \nabla^m \phi \wep\, dx \\
		&- \int_{L_0} v_\ve \nabla^m V^{\bm{i}}_\ve \cdot \left[ \sum_{n=1}^{m-1}\nabla^{m-1-n}\left(\nabla^n \wep \otimes \nabla \phi + \nabla^n \phi \otimes \nabla \wep\right)\right]\, dx
	\end{aligned}
	\end{equation}
and, analogously for the second equation,
 	\begin{equation}\label{eq:gradmV}
 	\begin{aligned}
 		\int_{L_0}  \nabla^m V^{\bm{i}} \cdot \nabla^m \wep \phi\, dx&=-\int_{L_0} \nabla^m V^{\bm{i}}_\ve \cdot \nabla^m \phi \wep\, dx \\
 		&- \int_{L_0} \nabla^m V^{\bm{i}} \cdot \left[ \sum_{n=1}^{m-1}\nabla^{m-1-n}\left(\nabla^n \wep \otimes \nabla \phi + \nabla^n \phi \otimes \nabla \wep\right)\right]\, dx.
 	\end{aligned}
 	\end{equation}
We use \eqref{eq:gradmVe} and \eqref{eq:gradmV} in \eqref{eq:Ve-V}. Then, in the resulting equation, we add and subtract $V^{\bm{i}}$ in each term containing $V^{\bm{i}}_\ve$ (but not in the terms containing already the difference $V^{\bm{i}}_\ve-V^{\bm{i}}$ and not in the first integral in the left-hand side of the equality \eqref{eq:Ve-V}). After some calculations and simplifications, we find
	\begin{equation*}
	\begin{aligned}
		&\alpha(1-\kappa)\int_{\Oe} (\nabla^m u\cdot \nabla^m V^{\bm{i}}_\ve -\nabla^m \uep \cdot \nabla^m V^{\bm{i}})\phi\, dx\\
		&=\int_{L_0}\wep(V^{\bm{i}}_\ve-V^{\bm{i}})\phi\, dx
		- \alpha\Bigg\{\int_{L_0} \vep\nabla^m(V^{\bm{i}}_\ve-V^{\bm{i}})\cdot \nabla^m \phi \wep \, dx
		-\int_{L_0} \vep \nabla^m\wep \cdot \nabla^m \phi (V^{\bm{i}}_\ve-V^{\bm{i}})\, dx\\
		&+ \int_{L_0} \vep \nabla^m(V^{\bm{i}}_\ve-V^{\bm{i}})\cdot\left[\sum_{n=1}^{m-1}\nabla^{m-1-n}\left(\nabla^n\wep \otimes\nabla\phi+\nabla^n\phi\otimes\nabla\wep\right)\right]\, dx\\
		&- \int_{L_0} \vep \nabla^m \wep \cdot\left[\sum_{n=1}^{m-1}\nabla^{m-1-n}\left(\nabla^n(V^{\bm{i}}_\ve-V^{\bm{i}})\otimes\nabla\phi+\nabla^n\phi\otimes\nabla(V^{\bm{i}}_\ve-V^{\bm{i}})\right)\right]\, dx\\
		&+(\kappa-1) \left[\int_{\Oe} \nabla^mV^{\bm{i}}\cdot\left(\sum_{n=1}^{m-1}\nabla^{m-1-n}\left(\nabla^n\wep \otimes\nabla\phi+\nabla^n\phi\otimes\nabla\wep\right)\right)\, dx + \int_{\Oe} \nabla^m V^{\bm{i}} \cdot \nabla^m \phi \wep\, dx\right] \Bigg\}\\
		&-\alpha(1-\kappa)\Bigg\{
		 \int_{\Oe} \nabla^m u\cdot \nabla^m \phi (V^{\bm{i}}_\ve-V^{\bm{i}})\, dx \\ 		
		&+ \int_{\Oe} \nabla^m u\cdot\left[\sum_{n=1}^{m-1}\nabla^{m-1-n}\left(\nabla^n(V^{\bm{i}}_\ve-V^{\bm{i}})\otimes \nabla \phi
		+\nabla^n\phi \otimes \nabla(V^{\bm{i}}_\ve-V^{\bm{i}})\right)\right]\, dx
		\Bigg\}.
	\end{aligned}
	\end{equation*} 
To get the assertion of the theorem, i.e., the estimate of the integral on the left-hand side of the previous equality, in terms of $|\Oe|^{1+\eta_{m,q,1}}$, is a lengthy check. It is based on the application, on the terms in the right-hand side, of the Cauchy-Schwarz inequality, the fact that $\|\nabla^n \phi\|_{L^{\infty}(L_0)}\leq C$, for all $n:\ 0\leq n\leq m-1$, and $|v_\ve|\leq 1$ in $L_0$, and the estimates \eqref{EnEst_Hm}, and \eqref{EnH2V}. We do not make explicit the calculations but we only notice that all these integrals can be estimated in terms of the product $|\Oe|^{\frac{1}{2}}$ and $|\Oe|^{\frac{1}{2}+\eta_{m,q,1}}$.  
\end{proof}

\begin{remark}\label{rem:sub_ue_Ve}
The identity \eqref{eq:approx_weak} holds also with $\uep$ replaced by $V_\ve^{\bm{h}}$ and $u$ replaced by $V^{\bm{h}}$, respectively,
 since in $L_0$ the pairs satisfy the same equations.
\end{remark}
\begin{remark}
From \autoref{EnEst} it follows that
\begin{equation}\label{EnEst_Hm_o}
\norm{\wep}_{H^m(\Omega)} = \mathcal{O}(\ve^{\frac{3}{2}})\quad \text{ and }\quad
\norm{\wep}_{H^{m-k}(\Omega)} = \mathcal{O}(\ve^{\frac{3}{2}+3\eta_{m,q,k}}),\qquad \forall k=1,\cdots,m-1.
\end{equation}
Moreover, from \eqref{EnH2V}, it follows that
\begin{equation}\label{Enh2V_o}
\norm{V_\ve^{\bm{i}}-V^{\bm{i}}}_{H^m(\Omega)} = \mathcal{O}(\ve^{\frac{3}{2}}) \text{ and }
\norm{V_\ve^{\bm{i}}-V^{\bm{i}}}_{H^{m-k}(\Omega)} = \mathcal{O}(\ve^{\frac{3}{2}+3\eta_{m,q,k}}),\qquad \forall k=1,\cdots,m-1.
\end{equation}
\end{remark}

\section{Definition and properties of the $2m$-order tensor}
\label{sec:pol}
We start this section with the definition of the tensor $\tensor$.
We note that the family of functions $\left(\frac{1}{\abs{\Oe}}\chi_\Oe \right)_{\ve > 0}$ is uniformly bounded in
$L^1(\Omega)$. Hence, by the Riesz's Representation theorem, we have that  the family  of measures
\begin{equation}
 d \mu_\ve :=\frac{1}{\abs{\Oe}}\chi_\Oe \dx
\end{equation}
is bounded in $C^0(\overline{\Omega})^*$
and hence by Banach-Alaoglu theorem (see for instance \cite{Rud87}), possibly up to the extraction of a subsequence,
we have that
\begin{equation}
 d \mu_\ve :=\frac{1}{\abs{\Oe}}\chi_\Oe \dx \to^\ast \dmu \quad \text{ for } \ve \to 0
\end{equation}
where $\to^\ast$ denotes the weak$^{\ast}$-convergence of $C^0(\overline{\Omega})^*$, that is
\begin{equation*}
\int_\Omega \frac{1}{\abs{\Oe}} \chi_\Oe \widetilde{\phi}\dx \to \int_\Omega \widetilde{\phi}\dmu \text{ for } \ve \to 0\quad
\text{ for all }\ \widetilde{\phi}\in C^0(\overline{\Omega}).
\end{equation*}
It is also immediate to see that, due to the form of $\Oe$ the measure  $\mu$ is concentrated at the point $y$ i.e. $\mu=\delta_y$.
Analogously, using  the energy estimates \eqref{EnH2V}, it follows that the family of functions
$\left( \frac{1}{\abs{\Oe}}\frac{\partial^m V_\ve^{\bm{i}}}{\partial x_{j_1}\cdots \partial x_{j_m}}\chi_\Oe \right)_{\ve > 0}$
is uniformly bounded in $L^1(\Omega)$ and therefore the family of measures
\begin{equation*}
 d\nu_\ve = \frac{1}{\abs{\Oe}} \frac{\partial^m V_\ve^{\bm{i}}}{\partial x_{j_1}\cdots \partial x_{j_m}} \chi_\Oe \dx
\end{equation*}
converges, possibly up to subsequences, to a Borel measure
\begin{equation} \label{partialVeps}
 d\nu_\ve = \frac{1}{\abs{\Oe}} \frac{\partial^m V_\ve^{\bm{i}}}{\partial x_{j_1}\cdots \partial x_{j_m}} \chi_\Oe \dx
          \to^\ast 
          {\tensor_{i_1\cdots i_mj_1\cdots j_m} \delta_y}\,\,
          \text{ for } \ve \to 0,
\end{equation}
which is equivalent to say that 	
\begin{equation}\label{convV}
\frac{1}{\abs{\Oe}} \int_\Omega \frac{\partial^m V_\ve^{\bm{i}}}{\partial x_{j_1}\cdots \partial x_{j_m}} \chi_\Oe \widetilde{\phi} \dx
          \to \tensor_{i_1\cdots i_mj_1\cdots j_m}\widetilde{\phi}(y), \quad \text{ for all }\  \widetilde{\phi}\in C^0(\overline{\Omega}).
\end{equation}
To shorten the notation we define $\tensor_{\bm{i}\bm{j}}:=\tensor_{i_1\cdots i_mj_1\cdots j_m}$.

Before putting together the results in \autoref{Equality} and \eqref{convV}, we observe a local regularity results on $\uep$ and $V^{\bm{i}}_\ve$.
\begin{remark}\label{rem:reg_ue_Ve}
	We note that the function $\uep$ is $C^m(\Omega_\ve)$ since it satisfies, in the open set $\Oe$, a non homogeneous polyharmonic equation with a forcing term in $L^\infty$ (by assumption). Indeed, this result follows from local regularity theorems, see \cite{GazGruSwe10}, for which $\uep\in H^{2m}(\Oe)$, and consequently from the application of the embeddings theorems.\\
	This result also holds for $V^{\bm{i}}_\ve$ since these functions satisfy a homogeneous polyharmonic equation.
	
	From this regularity result, it follows that both $\nabla^m \uep$ and $\nabla^m V^{\bm{i}}_\ve$ are symmetric tensors in $\Oe$. 
\end{remark}
Therefore from the symmetries of $\nabla^m \uep$ in $\Oe$, for all $\phi\in C^m_0(L_0)$, by \autoref{Equality} and using \eqref{convV} (where we choose $\widetilde{\phi}=\nabla^m u \phi$), we also have that
\begin{equation}\label{convuep}
\begin{aligned}
\int_\Omega \frac{1}{\abs{\Oe}} \chi_\Oe \nabla^m \uep \cdot \nabla^m V^{\bm{i}} \phi\dx&\\
&\hspace{-2cm}=\int_\Omega \frac{1}{\abs{\Oe}} \chi_\Oe \frac{\partial^m \uep}{\partial  x_{i_1}\cdots \partial x_{i_m}} \phi\dx \to  
    \tensor\nabla^mu(y)\phi(y),\quad \textrm{for}\ \ \ve \to 0
\end{aligned}
\end{equation}
\begin{remark}\label{rem:ueps_weakstar}
 We notice that $\left(\frac{1}{\abs{\Oe}} \chi_\Oe \nabla^m \uep \cdot \nabla^m V^{\bm{i}}\right)_{\ve>0}$ 
 is uniformly bounded in $L^1(\Omega)$ due to \eqref{Vij_formel} and the use of energy estimates \eqref{EnEst_Hm} after summing and subtracting $u$ in $\nabla^m u_\ve$. Therefore this sequence converges in the weak* topology of $C(\overline{\Omega})$, up to a subsequence, to a Borel measure, i.e.,
 	\begin{equation}\label{eq:weaks_ueps}
 		\frac{1}{\abs{\Oe}} \chi_\Oe \nabla^m \uep \cdot \nabla^m V^{\bm{i}}\, dx\to^\ast d\mu_{\bm{i}}.
 	\end{equation} 	
Defining $L_1$ such that $\Omega_\ve\subset L_1\subset L_0$ and exploiting  \eqref{eq:weaks_ueps} and \eqref{convuep}, we deduce that 
	\begin{equation}
		\int_{\Omega}\phi\, d\mu_{\bm{i}}=\tensor\nabla^mu(y)\phi(y),\qquad \forall \phi\in C^m(\overline{L_1}),
	\end{equation}
and by the density of $C^m(\overline{L_1})$ in $C^0(\overline{L_1})$, we have that
	\begin{equation*}
		d\mu_{\bm{i}}=\mathbb{M}_{\bm{i}j_1\cdots j_m} \frac{\partial u}{\partial x_{j_1}\cdots \partial x_{j_m}} \delta_y.
	\end{equation*}		
\end{remark}
Let us now state some key properties of the tensor $\tensor$.
\begin{proposition}
\label{PTproperty}
We denote with $\sigma(\bm{i})$ any permutation of the indices $i_1,\cdots,i_m$. The $2m$-order tensor $\tensor$ has the full symmetries, i.e.,
\begin{equation}
\label{PTprop1}
\tensor_{\bm{i}\bm{j}}=\tensor_{\sigma(\bm{i})\bm{j}}=\tensor_{\bm{i}\sigma(\bm{j})}=\tensor_{\bm{j}\bm{i}}
\end{equation}
\end{proposition}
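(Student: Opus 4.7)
The plan is to dispatch the three symmetries in \eqref{PTprop1} in order of increasing difficulty, exploiting in each case the structure of the defining convergence \eqref{convV}.

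\textbf{Symmetry in $\bm{j}$.} Since $V_\ve^{\bm{i}} \in H^m(\Omega)$, the weak mixed partial derivatives of order $m$ commute (Schwarz), so $\frac{\partial^m V_\ve^{\bm{i}}}{\partial x_{j_1}\cdots\partial x_{j_m}}$ is invariant under any permutation $\sigma$ of $\bm{j}$. Substituting into \eqref{convV} and passing to the limit yields $\tensor_{\bm{i}\bm{j}}=\tensor_{\bm{i}\sigma(\bm{j})}$ immediately.

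\textbf{Symmetry in $\bm{i}$.} From \eqref{Vij_formel} the polynomial $V^{\bm{i}}(x)=\frac{1}{m!}x_{i_1}\cdots x_{i_m}$ is fully symmetric in $\bm{i}$, so the Dirichlet data $V^{\bm{i}}$ and all its normal derivatives up to order $m-1$ in \eqref{Vijeps_problem} satisfy $V^{\bm{i}}=V^{\sigma(\bm{i})}$. By uniqueness of the weak solution to the elliptic problem \eqref{Vijeps_problem}, I conclude $V_\ve^{\bm{i}}=V_\ve^{\sigma(\bm{i})}$ in $\Omega$, whence $\tensor_{\bm{i}\bm{j}}=\tensor_{\sigma(\bm{i})\bm{j}}$.

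\textbf{Exchange symmetry $\tensor_{\bm{i}\bm{j}}=\tensor_{\bm{j}\bm{i}}$.} This is the substantive step and relies on a reciprocity argument. Since $V_\ve^{\bm{j}}-V^{\bm{j}}\in\hzn$, I would use it as a test function in the weak form of \eqref{Vijeps_problem} for $V_\ve^{\bm{i}}$, namely $\int_\Omega\gep\nabla^m V_\ve^{\bm{i}}\cdot\nabla^m\varphi\,dx=0$ for all $\varphi\in\hzn$, and subtract the analogous identity with $\bm{i}$ and $\bm{j}$ swapped. After cancelling the common term $\int_\Omega\gep\nabla^m V_\ve^{\bm{i}}\cdot\nabla^m V_\ve^{\bm{j}}\,dx$ (symmetric in $\bm{i},\bm{j}$), I obtain
\[
\int_\Omega\gep\,\nabla^m V_\ve^{\bm{i}}\cdot\nabla^m V^{\bm{j}}\,dx=\int_\Omega\gep\,\nabla^m V_\ve^{\bm{j}}\cdot\nabla^m V^{\bm{i}}\,dx.
\]
Writing $\gep=1+(\kappa-1)\chi_\Oe$, the ``$1$''-contribution on each side equals $\int_\Omega\nabla^m V^{\bm{i}}\cdot\nabla^m V^{\bm{j}}\,dx$: indeed $\nabla^m V^{\bm{j}}$ is a constant tensor, and since $V_\ve^{\bm{i}}-V^{\bm{i}}\in\hzn$ an integration by parts gives $\int_\Omega\nabla^m(V_\ve^{\bm{i}}-V^{\bm{i}})\,dx=0$ componentwise. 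Thus these terms are symmetric in $(\bm{i},\bm{j})$ and cancel, leaving (since $\kappa\neq 1$)
\[
\int_\Oe\nabla^m V_\ve^{\bm{i}}\cdot\nabla^m V^{\bm{j}}\,dx=\int_\Oe\nabla^m V_\ve^{\bm{j}}\cdot\nabla^m V^{\bm{i}}\,dx.
\]
Dividing by $\abs{\Oe}$ and passing to the limit $\ve\to 0$ via \eqref{convV}, this becomes
$
\sum_{\bm{k}}T^{\bm{j}}_{\bm{k}}\tensor_{\bm{i}\bm{k}}=\sum_{\bm{k}}T^{\bm{i}}_{\bm{k}}\tensor_{\bm{j}\bm{k}},
$
where $T^{\bm{j}}_{\bm{k}}=\frac{\partial^m V^{\bm{j}}}{\partial x_{k_1}\cdots\partial x_{k_m}}=\frac{1}{m!}\sum_{\sigma\in S_m}\prod_{l=1}^{m}\delta_{j_{\sigma(l)},k_l}$. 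Invoking the symmetry of $\tensor_{\bm{i}\,\cdot}$ in its last $m$ slots established in the first step, the left-hand sum collapses to $\tensor_{\bm{i}\bm{j}}$ and the right-hand sum to $\tensor_{\bm{j}\bm{i}}$.

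The main obstacle is the third step: it requires both the careful choice of test function to exploit the homogeneity of the equation for $V_\ve^{\bm{i}}$, and a clean combinatorial collapse at the end that actually uses the symmetry proven in the first step. Everything else is either a commutation of derivatives or an application of uniqueness.
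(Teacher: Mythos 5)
Your proposal is correct, and the third step takes a genuinely different route from the paper's.

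For the permutation symmetries: your $\bm{i}$-step is identical to the paper's (the data of \eqref{Vijeps_problem} are symmetric in $\bm{i}$, hence so is the unique weak solution). Your $\bm{j}$-step is slightly more elementary than the paper's: you appeal to the commutation of weak/distributional derivatives, which gives $\partial_{\bm{j}}V_\ve^{\bm{i}}=\partial_{\sigma(\bm{j})}V_\ve^{\bm{i}}$ a.e.\ directly from $V_\ve^{\bm{i}}\in H^m(\Omega)$, whereas the paper invokes the interior $C^m$ regularity of $V_\ve^{\bm{i}}$ on $\Oe$ (Remark~\ref{rem:reg_ue_Ve}) to justify interchanging derivatives. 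Both are correct; yours does not need the higher interior regularity for this step.

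For the exchange symmetry $\tensor_{\bm{i}\bm{j}}=\tensor_{\bm{j}\bm{i}}$: the paper applies the localized approximate identity of Lemma~\ref{Equality} (via Remark~\ref{rem:sub_ue_Ve}, replacing $u,\uep$ by $V^{\bm{j}},V_\ve^{\bm{j}}$), which carries an $\mathcal{O}(\abs{\Oe}^{\eta_{m,q,1}})$ error term and a compactly supported test function $\phi$, and it collapses $\nabla^m V_\ve^{\bm{i}}\cdot\nabla^m V^{\bm{j}}$ to $\partial^m_{\bm{j}}V_\ve^{\bm{i}}$ using the pointwise symmetry of $\nabla^m V_\ve^{\bm{i}}$ before passing to the limit. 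You instead derive an \emph{exact} global reciprocity identity
\begin{equation*}
\int_\Omega\gep\,\nabla^m V_\ve^{\bm{i}}\cdot\nabla^m V^{\bm{j}}\,dx=\int_\Omega\gep\,\nabla^m V_\ve^{\bm{j}}\cdot\nabla^m V^{\bm{i}}\,dx
\end{equation*}
by testing the weak form of \eqref{Vijeps_problem} for $V_\ve^{\bm{i}}$ against $V_\ve^{\bm{j}}-V^{\bm{j}}\in\hzn$ and antisymmetrizing, then strip off the $\gep\equiv 1$ contribution (which is $\bm{i}\leftrightarrow\bm{j}$ symmetric, using that $\nabla^m V^{\bm{j}}$ is constant and $\int_\Omega\nabla^m(V_\ve^{\bm{i}}-V^{\bm{i}})\,dx=0$). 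Dividing by $\abs{\Oe}$, using \eqref{convV} with $\widetilde\phi\equiv 1$, and invoking the $\bm{j}$-symmetry from your first step to collapse $\sum_{\bm{k}}T^{\bm{j}}_{\bm{k}}\tensor_{\bm{i}\bm{k}}=\tensor_{\bm{i}\bm{j}}$ then finishes the job. Your argument is shorter and self-contained, bypassing Lemma~\ref{Equality} and its estimates entirely; what it costs is the small combinatorial observation at the end (that $T^{\bm{j}}_{\bm{k}}$ is a symmetrizer and the weights sum to one), which the paper instead absorbs into the regularity of $\nabla^m V_\ve^{\bm{i}}$ at the $\ve$-level. Both routes are valid; yours buys a cleaner, exact reciprocity that would also be the natural starting point for proving positivity-type bounds.
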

\begin{proof}
The first equality in  \eqref{PTprop1} follows immediately from the fact that $V^{\bm{i}}=V^{\sigma(\bm{i})}$, which implies $V_\ve^{\bm{i}}=V_\ve^{\sigma(\bm{i})}$ (from the uniqueness result for \eqref{Vijeps_problem}). The second identity follows from the regularity results for $V^{\bm{i}}_\ve$, see Remark \ref{rem:reg_ue_Ve}, which implies that we can interchange the order of differentiation in \eqref{convV}. \\
The identity $\tensor_{\bm{i}\bm{j}}=\tensor_{\bm{j}\bm{i}}$ follows substituting $u$ and $\uep$ with $V^{\bm{j}}$ and $V_\ve^{\bm{j}}$, respectively, in \autoref{Equality}, see Remark \ref{rem:sub_ue_Ve}, and using the symmetry of the tensor $\nabla^m V^{\bm{i}}_\ve$, see Remark \ref{rem:reg_ue_Ve}. In fact
\begin{equation}\label{eq:Ve_V_interch}
\frac{1}{\abs{\Oe}}\int_\Oe \nabla^m V_\ve^{\bm{i}} \cdot \nabla^m V^{\bm{j}} \phi \dx
= \frac{1}{\abs{\Oe}}\int_\Oe \nabla^m V_\ve^{\bm{j}} \cdot \nabla^m V^{\bm{i}} \phi  \dx + \mathcal{O}(\abs{\Oe}^{\eta_{m,q,1}}),
\end{equation}
for all test-functions $\phi \in C_0^m(L_0)$.

Therefore, using \eqref{partialVeps} and Remark \ref{rem:reg_ue_Ve}, we get for the left-hand side
\begin{equation*}
\label{eq:limit}
\frac{1}{\abs{\Oe}} \int_\Oe \nabla^m V_\ve^{\bm{i}} \cdot \nabla^m V^{\bm{j}} \phi \dx
=\frac{1}{\abs{\Oe}} \int_\Oe \frac{\partial^m V_\ve^{\bm{i}}}{\partial x_{j_1}\cdots \partial x_{j_m}}\phi  \dx
\to
\tensor_{\bm{i}\bm{j}}\phi(y).
\end{equation*}
On the other hand, the first term in the right-hand side of \eqref{eq:Ve_V_interch} gives
\begin{equation}
\label{eq:limit2}
\frac{1}{\abs{\Oe}} \int_\Oe \nabla^m V_\ve^{\bm{j}} \cdot \nabla^m V^{\bm{i}} \phi \dx
=\frac{1}{\abs{\Oe}} \int_\Oe \frac{\partial^m V_\ve^{\bm{j}}}{\partial x_{i_1}\cdots \partial x_{i_m}}\phi  \dx
\to
\tensor_{\bm{j}\bm{i}}\phi(y),
\end{equation}
that is the assertion.
\end{proof}
By the symmetry properties of $\tensor$ we can consider, without loss of generality, the space of symmetric tensor of order $m$, i.e., 
	\begin{equation*}
		S^m(\mathbb{R}^2):=\Big\{A \ \textrm{such that}\ \ A_{\bm{i}}=A_{\sigma(\bm{i})}   \Big\}.
	\end{equation*}
Therefore, we have that $\tensor:S^m(\mathbb{R}^2)\rightarrow S^m(\mathbb{R}^2)$.	
For a review on some results on symmetric tensors of generic order, their properties, their decomposition and their relations with hypermatrices see \cite{ComGolLimMou08} \cite{KoldBad09} and references therein. 

We introduce some auxiliary functions which are needed to get some bounds on the tensor of order $2m$. \\
\begin{definition}
Given $E\in S^m(\mathbb{R}^2)$, we define the auxialiary functions 
\begin{equation} \label{eq:VVE}
V := \sum_{\textbf{i}=(i_1,\cdots,i_m)=1}^{2} E_{\bm{i}}V^{\bm{i}}\qquad \text{ and }\qquad V_\ve:= \sum_{\bm{i}=(i_1,\cdots,i_m)=1}^{2} E_{\bm{i}}V_\ve^{\bm{i}}.
\end{equation}
\end{definition}
From \eqref{eq:Vi} and \eqref{Vijeps_problem}, we find that by construction $V$ and $V_\ve$ are solutions of
\begin{equation} \label{V_problem}
\begin{cases}
\vspace{0.2cm}
\divz (\nabla^m V) = 0 &\text{ in } \Omega,\\
\vspace{0.2cm}
V = \sum\limits_{\bm{i}=1}^{2} E_{\bm{i}}V^{\bm{i}} & \text{ on } \partial \Omega,\\
\normalderivative{V}=\frac{\partial}{\partial n}\left(\sum\limits_{\bm{i}=1}^{2}E_{\bm{i}}V^{\bm{i}}\right),\cdots, \normalderivativem{V}{m-1}=\frac{\partial^{m-1}}{\partial n^{m-1}}\left(\sum\limits_{\bm{i}=1}^{2}E_{\bm{i}}V^{\bm{i}}\right) &  \text{ on } \partial \Omega,
\end{cases}	
\end{equation}
and
\begin{equation} \label{Veps_problem}
\begin{cases}
\vspace{0.2cm}
\divz (\gep \nabla^m V_\ve)=0, & \text{ in } \Omega\\
\vspace{0.2cm}
V_\ve=V &\text{ on } \partial \Omega,\\
\normalderivative{V_\ve}=\normalderivative{V},\cdots,\normalderivativem{V_\ve}{m-1}=\normalderivativem{V}{m-1} &\text{ on } \partial \Omega,
\end{cases}
\end{equation}
respectively.
\begin{remark}
We observe that from the definition of $V^{\bm{i}}$, see \eqref{Vij_formel}, it follows 
\begin{equation}\label{eq:E}
\nabla^m V=\sum_{\bm{i}=1}^{2} E_{\bm{i}}\nabla^m V^{\bm{i}}=E.
\end{equation}
In fact, from \eqref{Vij_formel}, since $x_{i_l}=x_1$ or $x_{i_l}=x_2$, for $l=1,\cdots,m$, we have that
	\begin{equation*}
		x_{i_1}x_{i_2}\cdots x_{i_m}=x_1^{m-h}x_2^h,
	\end{equation*} 
where $h$ is the number of $x_{i_l}$, for $l=1,\cdots,m$ equal to $x_1$. In this way, we immediately get that
	\begin{equation}\label{eq:part_der_Vi}
		\frac{\partial^m V^{\bm{i}}}{\partial x_{j_1}\cdots \partial x_{j_m}}=\frac{1}{m!}\frac{\partial^m (x_{i_1}x_{i_2}\cdots x_{i_m})}{\partial x_{j_1}\cdots \partial x_{j_m}}=\frac{1}{m!}\frac{\partial^m(x_1^{m-h}x_2^h)}{\partial x_1^{m-k}\partial x_2^k}=
		\begin{cases}
		\frac{(m-k)!k!}{m!} & \textrm{if}\ \ h=k\\
		0 & \textrm{if}\ \ h\neq k.
		\end{cases}
	\end{equation}	
Then, since for hypothesis $E_{i_1i_2\cdots i_m}$ is a symmetric tensor, it follows that the number of tensors $E_{i_1\cdots i_m}=E_{\sigma(i_1\cdots i_m)}$, in the sum of \eqref{eq:E}, which have $m-k$ components equal to $1$ and $k$ components equal to $2$ is given by $\binom{m}{k}$. This consideration and \eqref{eq:part_der_Vi} give the identity in \eqref{eq:E}. 
\end{remark}

\begin{definition}
Given $V$ and $V_\ve$  as in \eqref{eq:VVE}, we define 
	\begin{equation*}
		\Wep := V_\ve-V.
	\end{equation*}		
\end{definition}
By means of the difference of \eqref{V_problem} and \eqref{Veps_problem}, and then adding and subtracting $\divz(\gep\nabla^m V)$, according to \eqref{eq:ve}, we find that $\Wep$ satisfies
\begin{equation} \label{Weps_problem}
\begin{cases}
\vspace{0.2cm}
\divz  (\gep \nabla^m \Wep)= \divz  ((1-\kappa)\chi_{\Oe}\nabla^m V)  & \text{ in } \Omega,\\
\vspace{0.2cm}
\Wep=0 & \text{ on } \partial \Omega,\\
\normalderivative{\Wep}=\cdots=\normalderivativem{\Wep}{m-1}=0 &\text{ on } \partial \Omega.
\end{cases}
\end{equation}
	
Using these auxiliary functions, in the following, we determine some sharp bounds on $\tensor$.
\begin{proposition} \label{BoundsPT}
The $2m$-order tensor $\tensor$  is positive definite and satisfies
\begin{equation*}
\abs{E}^2 \leq \tensor  E\cdot E\leq \frac{1}{\kappa} \abs{E}^2, \,\,\text{ for all } E\in S^m(\R^{2}) .
\end{equation*}
\end{proposition}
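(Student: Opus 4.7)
My plan is to express $\tensor E\cdot E$ as the leading-order rescaled energy of $\Wep = V_\ve - V$ on $\Oe$ and then bound that energy from above and below. First, since the weak-$\ast$ convergence \eqref{convV} holds for every $\widetilde{\phi}\in C^0(\overline{\Omega})$, I take $\widetilde{\phi}\equiv 1$, multiply by $E_{\bm{i}}E_{\bm{j}}$, and sum over $\bm{i},\bm{j}$; using \eqref{eq:VVE} and the identity $\nabla^m V = E$ established in \eqref{eq:E}, this yields the scalar representation
\begin{equation*}
\tensor E\cdot E = \lim_{\ve\to 0} \frac{1}{\abs{\Oe}} \int_{\Oe} \nabla^m V_\ve \cdot E \, dx.
\end{equation*}

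Next I exploit the weak form of \eqref{Weps_problem}, namely
\begin{equation*}
\int_\Omega \gep \nabla^m \Wep \cdot \nabla^m \phi \, dx = (1-\kappa) \int_{\Oe} E\cdot \nabla^m \phi \, dx, \qquad \phi\in \hzn,
\end{equation*}
and test with $\phi = \Wep$ to get the energy identity $\int_\Omega \gep \abs{\nabla^m \Wep}^2 \, dx = (1-\kappa) \int_{\Oe} E\cdot \nabla^m \Wep\, dx$. Decomposing $\nabla^m V_\ve = E + \nabla^m \Wep$ on $\Oe$ and substituting this identity into the scalar representation above gives the clean formula
\begin{equation*}
\tensor E\cdot E = \abs{E}^2 + \frac{1}{1-\kappa} \lim_{\ve\to 0} \frac{1}{\abs{\Oe}} \int_\Omega \gep \abs{\nabla^m \Wep}^2 \, dx,
\end{equation*}
from which the lower bound $\tensor E\cdot E \geq \abs{E}^2$, and hence positive definiteness, is immediate.

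For the upper bound I apply Cauchy--Schwarz to the energy identity and use $\gep \geq \kappa$ on $\Oe$ to obtain
\begin{equation*}
\int_\Omega \gep \abs{\nabla^m \Wep}^2 \, dx \leq \frac{1-\kappa}{\sqrt{\kappa}} \abs{E}\, \abs{\Oe}^{1/2} \left(\int_\Omega \gep \abs{\nabla^m \Wep}^2 \, dx\right)^{1/2}.
\end{equation*}
Squaring this quadratic inequality gives $\int_\Omega \gep \abs{\nabla^m \Wep}^2 \leq \frac{(1-\kappa)^2}{\kappa} \abs{E}^2 \abs{\Oe}$, and inserting into the representation of $\tensor E\cdot E$ produces $\tensor E\cdot E \leq \abs{E}^2 + \frac{1-\kappa}{\kappa} \abs{E}^2 = \frac{1}{\kappa}\abs{E}^2$. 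The heart of the argument is the one-line energy identity in the second paragraph, which is the standard variational duality used for two-phase polarization tensors (cf.\ \cite{BerFraVog03,BerFra06}); no further step is delicate.
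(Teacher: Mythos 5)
Your proof is correct, and it is in fact cleaner than the paper's. The key difference is that you test \eqref{convV} with $\widetilde{\phi}\equiv 1$ rather than a general $\phi\in C^m(\overline{\Omega})$. This lets you test the weak formulation \eqref{eq:weakWe} with $\varphi=\Wep\in H^m_0(\Omega)$ directly, yielding the \emph{exact} energy identity
\begin{equation*}
\int_\Omega \gep\,\abs{\nabla^m\Wep}^2\,dx = (1-\kappa)\int_\Oe E\cdot\nabla^m\Wep\,dx,
\end{equation*}
with no error terms. The paper instead works with an arbitrary $\phi\in C^m(\overline{\Omega})$, uses $\varphi=\Wep\phi$, and must then control four extra integrals ($I_1,\dots,I_4$ in \eqref{eq:gradmVgradmWe}) coming from the product-rule expansion of $\nabla^m(\Wep\phi)$, invoking the energy estimates \eqref{EnH2V} to show they are $\mathcal{O}(\abs{\Oe}^{1+\eta_{m,q,1}})$. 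Your representation
\begin{equation*}
\tensor E\cdot E = \abs{E}^2 + \frac{1}{1-\kappa}\lim_{\ve\to 0}\frac{1}{\abs{\Oe}}\int_\Omega\gep\abs{\nabla^m\Wep}^2\,dx
\end{equation*}
is precisely the content of the paper's later Lemma \eqref{eq:CapVog06} (just rearranged); the paper proves that lemma separately as a prerequisite for Proposition \ref{ExplicitFormPT2} rather than using it for Proposition \ref{BoundsPT}. Your upper bound handles the quadratic inequality by dividing and squaring, whereas the paper uses Young's inequality and an absorption step — both are valid. All in all you reach the same conclusions via a shorter and more direct route, essentially because the extra generality of $\phi$ carried in the paper's proof is not needed for this particular proposition.
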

\begin{proof}
The starting point to prove this proposition is the definition of the tensor $\tensor_{\bm{i}\bm{j}}$, see \eqref{convV}, which we specialize in the case where the test function $\phi\in C^m(\overline{\Omega})$. This choice of regularity will be clear in the second part of the proof where is needed in order to get uniform estimates. By \eqref{convV}, we have that
\begin{equation} \label{eq:M}
 \tensor_{\bm{i}\bm{j}}\phi(y) =
 \lim_{\ve \to 0} \frac{1}{\abs{\Oe}} \int_\Oe \frac{\partial^m V_\ve^{\bm{i}}}{\partial x_{j_1}\cdots \partial x_{j_m}} \phi \dx,\qquad
 \text{ for all } \phi \in C^m(\overline{\Omega}).
\end{equation}
Applying this identity with the test function $E_{\bm{i}} E_{\bm{j}} \phi$, with $\phi\in C^m(\overline{\Omega})$, it follows
from \eqref{eq:M} that
\begin{equation} \label{eq:MEE}
 \begin{aligned}
   \tensor E\cdot E \phi(y)
   = \sum_{\bm{i},\bm{j}=1}^{2} E_{\bm{i}} \tensor_{\bm{i}\bm{j}} E_{\bm{j}} \phi(y)
   = \sum_{\bm{i},\bm{j}=1}^{2}\Big\{\lim_{\ve \to 0} E_{\bm{i}} \frac{1}{\abs{\Oe}} \int_\Oe
      \frac{\partial^m V_\ve^{\bm{i}}}{\partial x_{j_1}\cdots\partial x_{j_m}}  E_{j_1\cdots j_m}\phi \dx \Big\}.
 \end{aligned}
\end{equation}
Using \eqref{eq:E} in \eqref{eq:MEE}, we can rewrite $E_{j_1\cdots j_m}=\frac{\partial^m V}{\partial x_{j_1}\cdots \partial x_{j_m}}$ that is it follows:
\begin{equation} \label{eq:can_write}
 \begin{aligned}
  & 
  \tensor E\cdot E \phi(y)
  = \lim_{\ve \to 0} \frac{1}{\abs{\Oe}}\int_\Oe
     \sum_{\bm{i},\bm{j}=1}^{2}  E_{\bm{i}} \frac{\partial^m V_\ve^{\bm{i}}}{\partial x_{j_1}\cdots\partial x_{j_m}} \frac{\partial^m V}{\partial x_{j_1}\cdots \partial x_{j_m}}
     \phi  \dx \\
  = & \lim_{\ve \to 0} \frac{1}{\abs{\Oe}}\int_\Oe \nabla^m \Wep \cdot \nabla^m V \phi  \dx + \lim_{\ve \to 0} \frac{1}{\abs{\Oe}}\int_\Oe \abs{\nabla^m V}^2 \phi  \dx
 \end{aligned}
\end{equation}
where in the righ-hand side of the second equality we have added and subtracted $V^{\bm{i}}$ in the term containing $V^{\bm{i}}_\ve$ and then we have used \eqref{eq:VVE}, i.e., the fact that $\sum_{\bm{i}=2}^{2}E_{\bm{i}}W^{\bm{i}}_\ve=W_\ve$. 
Now, we derive an expansion for the first limit in the right-hand side of the second equality of \eqref{eq:can_write}, i.e., to be more precise, we prove that 
	\begin{equation}\label{eq:gradmVgradmWe_res}
		\int_{\Oe}\nabla^m W_\ve\cdot \nabla^m V \phi\, dx=\frac{1}{1-\kappa}\int_{\Omega}\gamma_\ve|\nabla^m W_\ve|^2 \phi\, dx + \mathcal{O}(|\Oe|^{1+\eta_{m,q,1}}).
	\end{equation}
With this aim, we use the weak formulation of the problem \eqref{Weps_problem}, i.e., for all $\varphi\in H^m_0(\Omega)$
	\begin{equation}\label{eq:weakWe}
		\int_{\Omega}\gamma_\ve \nabla^m W_\ve \cdot \nabla^m \varphi\, dx= (1-\kappa)\int_{\Oe}\nabla^m V \cdot \nabla^m \varphi\, dx.
	\end{equation}
In this last equation, we choose $\varphi=W_\ve \phi$, where $\phi\in C^m(\overline{\Omega})$, hence
	\begin{equation*}
	\int_{\Omega}\gamma_\ve \nabla^m W_\ve \cdot \nabla^m(W_\ve \phi)\, dx= (1-\kappa)\int_{\Oe}\nabla^m V \cdot \nabla^m(W_\ve \phi)\, dx.
	\end{equation*}
Expliciting the term of higher order derivative with respect to $W_\ve$, we get
	\begin{equation*}
		\begin{aligned}
		\int_{\Omega} \gamma_\ve \nabla^m W_\ve \cdot \nabla^m W_\ve \phi\, dx &+ \int_{\Omega}\gamma_\ve \nabla^m W_\ve \cdot \nabla^m \phi W_\ve\, dx\\
		&+\int_{\Omega}\gamma_\ve \nabla^m W_\ve \cdot \left[\sum_{n=1}^{m-1}\nabla^{m-1-n}\left(\nabla^n W_\ve \otimes \nabla \phi + \nabla^n \phi \otimes \nabla W_\ve\right)\right]\, dx\\
		&\hspace{-4cm}=(1-\kappa)\Bigg\{\int_{\Oe}\nabla^m V \cdot \nabla^m W_\ve \phi\, dx + \int_{\Oe}\nabla^m V \cdot \nabla^m \phi W_\ve\, dx \\
		&+ \int_{\Oe}\nabla^m V \cdot \left[ \sum_{n=1}^{m-1}\nabla^{m-1-n}\left(\nabla^n W_\ve \otimes \nabla \phi + \nabla^n \phi \otimes \nabla W_\ve\right)\right]\, dx   
		\Bigg\}. 
		\end{aligned}
	\end{equation*} 
Therefore, we can rewrite the previous equality as
	\begin{equation}\label{eq:gradmVgradmWe}
	\begin{aligned}
		(1-\kappa)\int_{\Oe}\nabla^m V \cdot \nabla^m W_\ve \phi\, dx&- \int_{\Omega} \gamma_\ve |\nabla^m W_\ve|^2\phi\, dx\\
		&\hspace{-3.3cm}=\int_{\Omega}\gamma_\ve \nabla^m W_\ve \cdot \nabla^m \phi W_\ve\, dx 
		 +\int_{\Omega}\gamma_\ve \nabla^m W_\ve \cdot \left[\sum_{n=1}^{m-1}\nabla^{m-1-n}\left(\nabla^n W_\ve \otimes \nabla \phi + \nabla^n \phi \otimes \nabla W_\ve\right)\right]\, dx\\
		&-(1-\kappa)\Bigg\{\int_{\Oe}\nabla^m V \cdot \left[ \sum_{n=1}^{m-1}\nabla^{m-1-n}\left(\nabla^n W_\ve \otimes \nabla \phi + \nabla^n \phi \otimes \nabla W_\ve\right)\right]\, dx\\
		&\hspace{1.7cm}+\int_{\Oe}\nabla^m V \cdot \nabla^m \phi W_\ve\, dx\Bigg\}=:I_1+I_2+I_3+I_4.
	\end{aligned}	
	\end{equation}
As already done in the proof of Lemma \ref{Equality}, we can estimate each integral, in the right-hand side of the previous formula, in terms of the product of $|\Oe|^{\frac{1}{2}}$ and $|\Oe|^{\frac{1}{2}+\eta_{m,q,1}}$ by using the Cauchy-Schwarz inequality, the fact that $\phi\in C^m(\overline{\Omega})$, and the estimates in Lemma \ref{EnEstV}. \\
\textit{Estimate of $I_1$:}
\begin{equation}\label{eq:I1}
	|I_1|\leq \max_{\overline{\Omega}}|\nabla^m\phi| \ \|\nabla^mW_\ve\|_{L^2(\Omega)} \|W_\ve\|_{L^2(\Omega)}\leq C |\Oe|^{\frac{1}{2}} |\Oe|^{\frac{1}{2}+\eta_{m,q,1}}=C |\Oe|^{1+\eta_{m,q,1}}. 
\end{equation}
\textit{Estimate of $I_2$:} We note that in the sum only appear the derivatives of order equal or less than $m-1$. Therefore
\begin{equation}\label{eq:I2}
	|I_2|\leq c \sum_{n=1}^{m-1}\|\nabla^mW_\ve\|_{L^2(\Omega)} \|\nabla^{n}W_\ve\|_{L^2(\Omega)}\leq C \|W_\ve\|_{H^m(\Omega)} \|W_\ve\|_{H^{m-1}(\Omega)}= C |\Oe|^{1+\eta_{m,q,1}}.
\end{equation}
\textit{Estimate of $I_3$:} In the sum only appear the derivatives of order equal or less than $m-1$, hence
\begin{equation}\label{eq:I3}
	|I_4|\leq C |\nabla^m V| \sum_{n=1}^{m-1} \|\nabla^n W_\ve\|_{L^2(\Omega_\ve)} |\Oe|^{\frac{1}{2}}\leq C \ |\Oe|^{\frac{1}{2}} \|W_\ve\|_{H^{m-1}(\Omega_\ve)}\leq C |\Oe|^{1+\eta_{m,q,1}}.
\end{equation}
\textit{Estimate of $I_4$:}
\begin{equation}\label{eq:I4}
|I_3|\leq |\nabla^m V|\ \max_{\overline{\Omega}_\ve} |\nabla^m\phi|\ \|W_\ve\|_{L^2(\Omega_\ve)}\ |\Oe|^{\frac{1}{2}}\leq C |\Oe|^{1+\eta_{m,q,1}}. 
\end{equation}
Inserting \eqref{eq:I1}, \eqref{eq:I2}, \eqref{eq:I3} and \eqref{eq:I4} in \eqref{eq:gradmVgradmWe}, we get \eqref{eq:gradmVgradmWe_res}, i.e.,
	\begin{equation*}
		\int_{\Oe}\nabla^m W_\ve\cdot \nabla^m V \phi\, dx=\frac{1}{1-\kappa}\int_{\Omega}\gamma_\ve|\nabla^m W_\ve|^2 \phi\, dx + \mathcal{O}(|\Oe|^{1+\eta_{m,q,1}}).
	\end{equation*}
We are now in position to prove the two estimates in the statement of the proposition.\\
\textit{Inequality $\mathbb{M}E\cdot E\geq |E|^2$:} Using \eqref{eq:gradmVgradmWe_res} in \eqref{eq:can_write}, we find
	\begin{equation}\label{eq:lim}
		\phi(y) \mathbb{M}E\cdot E=\frac{1}{1-\kappa}\lim\limits_{\ve\to 0} \frac{1}{|\Oe|}\int_{\Omega}\gamma_\ve|\nabla^m W_\ve|^2 \phi\, dx + \lim\limits_{\ve\to 0} \frac{1}{|\Oe|} \left[\int_{\Oe}|\nabla^m V|^2 \phi\, dx + \mathcal{O}(|\Oe|^{1+\eta_{m,q,1}})\right].
	\end{equation}
In this last equation, we choose nonnegative test functions $\phi\in C^m(\overline{\Omega})$, i.e., $\phi\geq 0$, and recalling that $\gamma_\ve>0$ and $1-\kappa>0$, see Assumptions \ref{ass:funct} and \eqref{gammaeps}, we get 
	\begin{equation*}
		\phi(y)\mathbb{M}E\cdot E\geq \lim\limits_{\ve\to 0} \frac{1}{|\Oe|} \left[\int_{\Oe}|\nabla^m V|^2 \phi\, dx + \mathcal{O}(|\Oe|^{1+\eta_{m,q,1}})\right]. 
	\end{equation*}
By \eqref{eq:E}, we find
	\begin{equation*}
		\phi(y)\mathbb{M}E\cdot E\geq |E|^2 \lim\limits_{\ve\to 0} \frac{1}{|\Oe|}\int_{\Omega_\ve}\phi\, dx + \lim\limits_{\ve\to 0}\mathcal{O}(|\Oe|^{\eta_m,q,1})=|E|^2 \phi(y),
	\end{equation*}
from which it follows that 
	\begin{equation*}
		\mathbb{M}E\cdot E\geq |E|^2.
	\end{equation*}	
\textit{Inequality $\mathbb{M}E\cdot E\leq \frac{1}{\kappa}|E|^2$:} Taking \eqref{eq:gradmVgradmWe_res} and assuming again that $\phi\geq 0$, we find
	\begin{equation}\label{eq: estimate above}
	\begin{aligned}
		\int_{\Omega}\gamma_\ve |\nabla^m W_\ve|^2 \phi\, dx&=\sqrt{\kappa}\int_{\Oe}\frac{1-\kappa}{\sqrt{\kappa}}\sqrt{\phi}\nabla^m W_\ve \cdot \nabla^m V\sqrt{\phi}\, dx +\mathcal{O}(|\Oe|^{1+\eta_{m,q,1}})\\
		&\leq \left(\int_{\Oe} \kappa |\nabla^mW_\ve|^2 \phi\, dx\right)^{\frac{1}{2}} \left(\int_{\Oe}\frac{(1-\kappa)^2}{\kappa}|\nabla^m V|^2 \phi\, dx\right)^{\frac{1}{2}} +\mathcal{O}(|\Oe|^{1+\eta_{m,q,1}})\\
		&\leq \frac{1}{2} \int_{\Oe} \kappa |\nabla^mW_\ve|^2 \phi\, dx + \frac{1}{2}(1-\kappa)^2\int_{\Oe}\frac{1}{\kappa}|\nabla^m V|^2 \phi +\mathcal{O}(|\Oe|^{1+\eta_{m,q,1}}),
	\end{aligned}
	\end{equation}
where in the first inequality we have applied the Cauchy-Schwarz inequality and in the second one the Young's inequality. Then, since all the terms in the first integral on the right-hand side of the last inequality are positive, and $\gamma_\ve\geq \kappa$ in $\Omega$, see \eqref{gammaeps}, we find that
	\begin{equation*}
		\frac{1}{2} \int_{\Oe} \kappa |\nabla^mW_\ve|^2 \phi\, dx= \frac{1}{2} \int_{\Oe} \gamma_\ve |\nabla^mW_\ve|^2 \phi\, dx\leq \frac{1}{2} \int_{\Omega} \gamma_\ve |\nabla^mW_\ve|^2 \phi\, dx ,
	\end{equation*} 
hence using this estimate in \eqref{eq: estimate above}, and then summing up the resulting terms appropriately, we get	
	\begin{equation*}
		\int_{\Omega}\gamma_\ve |\nabla^m W_\ve|^2 \phi\, dx\leq (1-\kappa)^2\int_{\Oe}\frac{1}{\kappa}|\nabla^m V|^2\phi\, dx + \mathcal{O}(|\Oe|^{1+\eta_{m,q,1}}).
	\end{equation*}
Inserting this equation in \eqref{eq:lim}, we get
	\begin{equation*}
		\phi(y)\mathbb{M}E\cdot E \leq  \lim\limits_{\ve\to 0} \frac{1-\kappa}{|\Oe|} \int_{\Oe}\frac{1}{\kappa}|\nabla^m V|^2 \phi\, dx+ \lim\limits_{\ve\to 0} \frac{1}{|\Oe|} \left[\int_{\Oe}|\nabla^m V|^2 \phi\, dx + \mathcal{O}(|\Oe|^{1+\eta_{m,q,1}})\right]
	\end{equation*}
where using again \eqref{eq:E}, it follows
	\begin{equation*}
		\phi(y)\mathbb{M}E\cdot E \leq \frac{1}{\kappa}|E|^2\lim\limits_{\ve\to 0}\frac{1}{|\Oe|}\int_{\Oe}\phi\, dx + \lim\limits_{\ve\to 0}\mathcal{O}(|\Oe|^{\eta_{m,q,1}})= \frac{1}{\kappa}|E|^2 \phi(y),
	\end{equation*}
which implies that 
	\begin{equation*}
		\mathbb{M}E\cdot E\leq \frac{1}{\kappa}|E|^2.
	\end{equation*}
\end{proof}

\begin{remark}
	We want to emphasize that, up to here, our analysis can be generalized in a straighforward way to the case of measurable set $\Omega_\ve$ which tends to zero when $\ve\to 0$. In this case,
		\begin{equation*}
			\int_\Omega \frac{1}{\abs{\Oe}} \chi_\Oe \frac{\partial^m \uep}{\partial  x_{i_1}\cdots \partial x_{i_m}} \phi\dx \to  
			\int_{\Omega}\mathbb{M}\nabla^m u\, \phi\, d\widetilde{\mu},\qquad \textrm{as}\ \ \ve\to 0,
		\end{equation*}	 
where $d\widetilde{\mu}$ is a Borel measure supported in $\Omega$.		
\end{remark}
\subsection*{Spectral decomposition of $\tensor$}
In the following we derive the spectral decomposition of the tensor $\tensor$.
To simplify the notation, we assume without loss of generality that $\Oe := \Oe(0,{ e_1})$, where
${ e_1}=(1,0)$ and ${ e_2}=(0,1)$. The more general setting can be always obtained by rotation of $\Oe$.
From \cite{ComGolLimMou08} we know that the dimension of the space of symmetric tensors of order $m$ is equal to $\textrm{dim}(S^m(\mathbb{R}^2))=m+1$. In the following, we denote with $\sigma_m$ any permutation of the elements of $e_{i_1}\otimes\cdots\otimes e_{i_m}$, where $i_k=1,2$, for $k=1,\cdots,m$ with the clause that the resulting permuted object is not repeated if it coincides with one of the already existing outcome. We define with $E^h$, for $h=1,\cdots,m+1$ the orthonormal canonical basis of $S^m(\mathbb{R}^2)$ i.e.
\begin{equation*}
\begin{aligned}
 E^1&=\underbrace{{e_1} \otimes\cdots\otimes {e_1}}_{m-elements} \\
 E^h&=\frac{1}{\sqrt{\binom{m}{h-1}} }\sum_{\sigma_m}\underbrace{e_1\otimes\cdots\otimes e_1}_{(m-h+1)-elements}\otimes \underbrace{e_2\otimes \cdots \otimes e_2}_{(h-1)-elements},\qquad \textrm{for}\ \ h=2,\cdots,m, \\
 E^{m+1}&=\underbrace{{e_2} \otimes\cdots\otimes {e_2}}_{m-elements}
\end{aligned}
\end{equation*}
\begin{remark}\label{rem:numb_elem_can_basis}
	By the definition of the permutation $\sigma_m$ that we are adopting, it is straightforward to observe that each $E^h$, for $h=1,\cdots,m+1$, is the sum of only $\binom{m}{h-1}$ tensors of order $m$ which have $m-h+1$ elements equal to $e_1$ and $h-1$ elements equal to $e_2$.
	For this reason, the quantity $1/ \sqrt{\binom{m}{h-1}}$ is only a normalization coefficient, i.e., it is such that $|E^h|^2=1$, for $h=2,\cdots,m$. 
\end{remark}
In order to derive the desired spectral value properties of $\tensor$, we use regularity properties of solutions of higher
order equations with discontinuous coefficients, recently derived by Barton in \cite{Bar16}.

\subsection*{Spectral Values along $E^h,\ h=1,\cdots,m$}
We will start by showing  that the quadratic form associated to $\tensor$, i.e., 
$\tensor E \cdot E$, attains the value $1$ along $E=E^h$, for $h=1,\cdots,m$.
\begin{proposition} \label{ExplicitFormPT}
Under the notational simplification that {$\Oe = \Oe(0,{{e_1}})$}, for all $h=1,\cdots,m$, it holds
\begin{equation*}
 \tensor E^h \cdot E^h =1.
\end{equation*}
\end{proposition}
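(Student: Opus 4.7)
Since $|E^h|^2 = 1$, Proposition~\ref{BoundsPT} already furnishes the lower bound $\mathbb{M} E^h \cdot E^h \geq 1$, so the work is to establish the matching upper bound. Evaluating the identity \eqref{eq:lim} (in the proof of Proposition~\ref{BoundsPT}) with the test function $\phi \equiv 1$ gives the representation
$$
\mathbb{M} E^h \cdot E^h \;=\; 1 \;+\; \frac{1}{1-\kappa}\,\lim_{\varepsilon\to 0}\frac{1}{|\Omega_\varepsilon|}\int_\Omega \gamma_\varepsilon\,|\nabla^m W_\varepsilon|^2 \,dx,
$$
where $W_\varepsilon = V_\varepsilon - V$ and the polynomial $V = \sum_{\bm{i}} E^h_{\bm{i}} V^{\bm{i}}$ satisfies $\nabla^m V = E^h$ and explicitly equals $V(x) = \frac{\sqrt{\binom{m}{h-1}}}{m!}\, x_1^{m-h+1} x_2^{h-1}$. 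Testing \eqref{eq:weakWe} with $W_\varepsilon$ itself shows that the excess limit is proportional to $\lim \frac{1}{|\Omega_\varepsilon|}\int_{\Omega_\varepsilon} E^h\cdot\nabla^m W_\varepsilon \,dx$, so everything reduces to proving
$$
\int_{\Omega_\varepsilon} E^h \cdot \nabla^m W_\varepsilon\,dx \;=\; o(|\Omega_\varepsilon|) \;=\; o(\varepsilon^3).
$$

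The structural key is that for $h \leq m$ the exponent $m-h+1$ is at least $1$, so $V$ genuinely depends on the strip direction $x_1$; equivalently, every non-zero entry $E^h_{\bm{i}}$ has at least one index equal to $1$. Since the partial derivatives of the $H^m$-function $W_\varepsilon$ commute, this yields
$$
E^h\cdot\nabla^m W_\varepsilon \;=\; \sqrt{\tbinom{m}{h-1}}\,\partial_1^{\,m-h+1}\partial_2^{\,h-1} W_\varepsilon \;=\; \partial_1\!\left(\sqrt{\tbinom{m}{h-1}}\,\partial_1^{\,m-h}\partial_2^{\,h-1} W_\varepsilon\right).
$$
Integrating by parts in $x_1$ over the Lipschitz domain $\Omega_\varepsilon$ converts the volume integral into a boundary integral on $\partial\Omega_\varepsilon$, and on the two long flat sides of the stadium $\Omega_\varepsilon$ the outward unit normal is $\pm e_2$, so $n_\varepsilon^{(1)}=0$. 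Only the two cap half-circles of radius $\varepsilon^2$ at the ends of the strip contribute:
$$
\int_{\Omega_\varepsilon} E^h\cdot\nabla^m W_\varepsilon\,dx \;=\; \sqrt{\tbinom{m}{h-1}}\int_{\partial\Omega_\varepsilon\,\cap\,\text{caps}} \partial_1^{\,m-h}\partial_2^{\,h-1} W_\varepsilon \cdot n_\varepsilon^{(1)}\,dS.
$$
This is the quantitative counterpart of the structural fact that the polynomial $V$ already solves the transmission problem exactly on the \emph{infinite} strip (where $\gamma_\varepsilon$ would depend only on $x_2$, annihilating $E^h$ for $h \leq m$); accordingly the residual $W_\varepsilon$ is localised near the caps.

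The remaining task, and the main technical obstacle, is to show that this cap boundary integral is $o(\varepsilon^3)$. Since the total arc length of the caps is $2\pi\varepsilon^2$, this amounts to establishing $\|\partial^{\,m-1} W_\varepsilon\|_{L^1(\text{caps})} = o(\varepsilon)$. A direct application of the trace theorem combined with the energy estimates of Lemma~\ref{EnEst} yields only the borderline bound $O(\varepsilon^3)$, which is insufficient by itself. To close this gap I would exploit that, separately on $\Omega_\varepsilon$ and on $L_0\setminus\overline{\Omega_\varepsilon}$, the function $W_\varepsilon$ solves the homogeneous polyharmonic equation with constant coefficients, and invoke the transmission regularity result of Barton~\cite{Bar16} (already used in Remark~\ref{rem:reg_ue_Ve}) across $\partial\Omega_\varepsilon$ to obtain uniform $L^\infty$-bounds for the $(m-1)$-st derivatives. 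A scaling argument adapted to the anisotropic geometry of the strip (length $\varepsilon$, width $\varepsilon^2$) should then transfer these local estimates to the caps with the needed decay rate $o(\varepsilon)$. Once this is in hand, the representation above gives $\mathbb{M} E^h \cdot E^h \leq 1$, which combined with the lower bound from Proposition~\ref{BoundsPT} yields the claimed equality.
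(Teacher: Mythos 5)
Your reduction to showing $\int_{\Oe} E^h\cdot\nabla^m W_\ve\,dx = o(|\Oe|)$ is the paper's reduction, and your structural observation that for $h\le m$ the contraction $E^h\cdot\nabla^m W_\ve$ equals $\sqrt{\binom{m}{h-1}}\,\partial_1^{m-h+1}\partial_2^{h-1}W_\ve$ — so at least one $\partial_1$ can be pulled out — is exactly what drives the paper's estimate. You also correctly identify Barton's regularity result as the missing ingredient. So the route is essentially the same. The differences are minor: the paper splits $\Oe = \Oep\cup(\Oe\setminus\Oep)$, disposes of the caps by Cauchy–Schwarz (using $|\Oe\setminus\Oep|=O(\ve^4)$, so $\|\nabla^m W_\ve\|_{L^2}|\Oe\setminus\Oep|^{1/2}=O(|\Oe|^{7/6})$), and on the rectangle $\Oep$ applies the fundamental theorem of calculus in $x_1$ to produce two vertical-segment integrals at $x_1=\pm\ve$ over $|x_2|\le\ve^2$; you instead integrate by parts over the whole stadium and collect everything on the cap arcs of length $O(\ve^2)$. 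Either version leaves one with a boundary-type term of measure $O(\ve^2)$ that needs to be multiplied by a sup bound on $\nabla^{m-1}W_\ve$ of order $o(\ve)$.

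The gap in your argument is precisely at the step you flag as the "main technical obstacle", and the fix is simpler than you suggest: no anisotropic rescaling is needed. The paper invokes Theorem 24 of \cite{Bar16} directly: since $W_\ve\in H_0^m(\Omega)$ solves \eqref{Weps_problem} with source $\divz((1-\kappa)\chi_{\Oe}E)$, one gets, for any conjugate pair $p\in(3/2,2)$, $p'\in(2,3)$,
\begin{equation*}
\sup_{\Oe}\abs{\nabla^{m-1}W_\ve}\le C\norm{\nabla^{m-1}W_\ve}_{L^p(\Omega)} + \norm{(1-\kappa)\chi_{\Oe}E}_{L^{p'}(\Omega)},
\end{equation*}
and combining this with the energy estimate $\norm{W_\ve}_{H^m(\Omega)}\le C|\Oe|^{1/2}$ and $\norm{\chi_{\Oe}E}_{L^{p'}}\le |E|\,|\Oe|^{1/p'}$ gives $\sup_{\Oe}|\nabla^{m-1}W_\ve|\le C|\Oe|^{1/p'}$. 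Since $|\Oe|\sim\ve^3$ and $3/p'>1$, this sup is $o(\ve)$, which is exactly the decay you needed on the caps. Multiplying by the boundary measure $\ve^2$ then yields $O(|\Oe|^{1/p'+2/3})=o(|\Oe|)$. Note also that the relevant Barton result is his pointwise/$L^\infty$ gradient estimate for higher-order systems with rough (merely bounded measurable) coefficients, which is a global a-priori estimate — not the interior constant-coefficient regularity used in Remark \ref{rem:reg_ue_Ve}, which is what your proposal cites; so the citation target is the right paper but the wrong statement.
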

\begin{proof}
The proof of this proposition is essentially based on the use of the equation \eqref{eq:can_write}, where we choose $\phi=1$, i.e., 
	\begin{equation}\label{eq:spect_val_Eh}
		\mathbb{M}E\cdot E=|E|^2 + \lim\limits_{\ve\to 0}\frac{1}{|\Omega_\ve|}\int_{\Omega_\ve}\nabla^m W_\ve \cdot E\, dx,
	\end{equation}
where we used the fact that $\nabla^m V=E$, see \eqref{eq:E}. 
Next, we use the equation satisfied by $W_\ve$, see \eqref{Weps_problem}, and the regularity estimates proved in \cite{Bar16} to get an estimate of the integral term in the previous formula, when we choose $E=E^h$, for $h=1,\cdots,m$.\\
We first note that $W_\ve\in H^m_0(\Omega)$ and in \eqref{Weps_problem} the source term $((1-\kappa)\chi_{\Oe} \nabla^m V)\in L^2(\Omega)$. Then, for all $p'\in (2,3)$ and $p\in (\frac{3}{2},2)$ such that $\frac{1}{p}+\frac{1}{p'}=1$, by Theorem 24 in \cite{Bar16}, it follows that there exists $C$, independent of $\ve$ and $E$, such that 
	\begin{equation}\label{eq:sup_nabla_mm1}
		\sup\limits_{\Oe}|\nabla^{m-1}W_\ve|\leq C \|\nabla^{m-1}W_\ve\|_{L^p(\Omega)} + \|(1-\kappa)\chi_{\Oe} E \|_{L^{p'}(\Omega)}. 
	\end{equation}
In addition, since $p\in(\frac{3}{2},2)$, we also have that
	\begin{equation*}
		\|\nabla^{m-1}W_\ve\|_{L^p(\Omega)}\leq C \|\nabla^{m-1} W_\ve\|_{L^2(\Omega)}\leq C \| W_\ve\|_{H^{m-1}(\Omega)}\leq C \| W_\ve\|_{H^m(\Omega)}.
	\end{equation*}
Last inequality and \eqref{eq:sup_nabla_mm1} imply that 
	\begin{equation}\label{eq:sup_gradmm1}
		\sup\limits_{\Oe}|\nabla^{m-1}W_\ve|\leq C \|W_\ve\|_{H^m(\Omega)}+ C' |E| |\Oe|^{\frac{1}{p'}}\leq C |\Oe|^{\min(\frac{1}{2},\frac{1}{p'})}\leq C |\Oe|^{\frac{1}{p'}}.
	\end{equation}
\textit{Spectral value along $E^1$.} Choosing $E=E^1$ in equation \eqref{eq:spect_val_Eh}, we get
	\begin{equation}\label{eq:E^1}
		\mathbb{M}E^1\cdot E^1=1+\lim\limits_{\ve\to 0}\frac{1}{|\Oe|}\int_{\Oe}\frac{\partial^m W_\ve}{\partial x_1^m}\, dx,
	\end{equation}
and moreover
	\begin{equation*}
		\int_{\Oe}\frac{\partial^m W_\ve}{\partial x_1^m}\, dx=\int_{\Oe\setminus \Oep}\frac{\partial^m W_\ve}{\partial x_1^m}\, dx + \int_{\Oep}\frac{\partial^m W_\ve}{\partial x_1^m}\, dx=:I^1_1+I^1_2.
	\end{equation*}
We estimate $I^1_1$ and $I^1_2$: 
	\begin{equation}\label{eq:I1E^1}
		|I^1_1|= \Big|\int_{\Oe\setminus \Oep}\frac{\partial^m W_\ve}{\partial x_1^m}\, dx \Big| \leq \Big\|\frac{\partial^m W_\ve}{\partial x^m_1}\Big\|_{L^2(\Oe\setminus\Oep)} |\Oe\setminus\Oep|^{\frac{1}{2}}\leq \|\nabla^m W_\ve\|_{L^2(\Omega)} |\Oe\setminus\Oep|^{\frac{1}{2}}=\mathcal{O}(|\Oe|^{1+\frac{1}{6}}),  
	\end{equation}
where in the last equality we have used the results in \eqref{eq:area_incl}. For the integral $I^1_2$ we have
	\begin{equation*}
		I^1_2=\int_{\Oep} \frac{\partial^m W_\ve}{\partial x_1^m}\, dx= \int_{-\ve^2}^{\ve^2}\int_{-\ve}^{\ve}\frac{\partial^m W_\ve}{\partial x^m_1}\, dx_1\, dx_2=\int_{-\ve^2}^{\ve^2}\left(\frac{\partial^{m-1}W_\ve (\ve,x_2)}{\partial x^{m-1}_1}- \frac{\partial^{m-1}W_\ve (-\ve,x_2)}{\partial x^{m-1}_1}\right)\, dx_2.
	\end{equation*} 
Then, using the result in \eqref{eq:sup_gradmm1}, we get that
	\begin{equation}\label{eq:I2E^1}
	\begin{aligned}
		|I^1_2|=\Bigg|\int_{-\ve^2}^{\ve^2}\left(\frac{\partial^{m-1}W_\ve (\ve,x_2)}{\partial x^{m-1}_1}- \frac{\partial^{m-1}W_\ve (-\ve,x_2)}{\partial x^{m-1}_1}\right)\, dx_2\Bigg| &\leq C \sup\limits_{x_2\in[-\ve^2,\ve^2]} \Bigg| \frac{\partial^{m-1} W_\ve}{\partial x^{m-1}_1}\Bigg| \ \ve^2\\
		& \leq C\|\nabla^{m-1}W_\ve\|_{L^{\infty}(\Oe)}\ \ve^2=\mathcal{O}(|\Oe|^{\frac{1}{p'}+\frac{2}{3}}),
	\end{aligned}
	\end{equation}
where, since for hypothesis $p'\in(2,3)$, we have that $\frac{1}{p'}+\frac{2}{3}=1+\delta$, $\delta>0$.	
Therefore, inserting the results in \eqref{eq:I1E^1} and \eqref{eq:I2E^1} into \eqref{eq:E^1}, we finally find the equation
	\begin{equation*}
		\mathbb{M}E^1\cdot E^1=1.
	\end{equation*}
\textit{Spectral value along $E^h$, for $h=2,\cdots,m$.} 	
Choosing $E=E^h$, for $h=2,\cdots,m$, in equation \eqref{eq:spect_val_Eh}, we get
	\begin{equation}\label{eq:E^h}
	\mathbb{M}E^h\cdot E^h=1 + \lim\limits_{\ve\to 0}\frac{1}{|\Omega_\ve|}\int_{\Omega_\ve}\nabla^m W_\ve \cdot E^h\, dx,
	\end{equation}
where, using the symmetries of $\nabla^m W_\ve$, (which come from the regularity property of the polyharmonic function $W_\ve$ in $\Omega_\ve$, see Equation \eqref{Weps_problem} and Remark \ref{rem:reg_ue_Ve}) and Remark \ref{rem:numb_elem_can_basis}, we get	
	\begin{equation*}
		\int_{\Oe}\nabla^m W_\ve \cdot E^h\, dx=\sqrt{\binom{m}{h-1}}\int_{\Oe}\frac{\partial^m W_\ve}{\partial x_1^{m-h+1}\partial x_2^{h-1}}\, dx_1\ dx_2,
	\end{equation*}
and moreover
	\begin{equation*}
		\int_{\Oe}\frac{\partial^m W_\ve}{\partial x_1^{m-h+1}\partial x_2^{h-1}}\, dx_1\ dx_2=\int_{\Oe\setminus\Oep}\frac{\partial^m W_\ve}{\partial x_1^{m-h+1}\partial x_2^{h-1}}\, dx_1\ dx_2 + \int_{\Oep}\frac{\partial^m W_\ve}{\partial x_1^{m-h+1}\partial x_2^{h-1}}\, dx_1\ dx_2=:I^h_1+I^h_2.
	\end{equation*}
As already done for the terms $I^1_1$ and $I^1_2$, we find that 
	\begin{equation}\label{eq:I1E^h}
	|I^h_1|= \Big|\int_{\Oe\setminus \Oep}\frac{\partial^m W_\ve}{\partial x_1^{m-h+1}\partial x_2^{h-1}}\, dx_1\ dx_2 \Big| \leq \|\nabla^m W_\ve\|_{L^2(\Oe\setminus\Oep)} |\Oe\setminus\Oep|^{\frac{1}{2}}=\mathcal{O}(|\Oe|^{1+\frac{1}{6}}),  
	\end{equation}
where in the last equality we have used the results in \eqref{eq:area_incl}.
For the integral $I^h_2$ we get
	\begin{equation}\label{eq:I2E^h}
	\begin{aligned}
		|I^h_2|\leq \Bigg|\int_{\Oe}\frac{\partial^m W_\ve}{\partial x^{m-h+1}_1\partial x^{h-1}_2}\Bigg| &=\Bigg|\int_{-\ve^2}^{\ve^2}\frac{\partial^{m-1}W_\ve(\ve,x_2)}{\partial x^{m-h}_1\partial x^{h-1}_2} - \frac{\partial^{m-1}W_\ve(-\ve,x_2)}{\partial x^{m-h}_1\partial x^{h-1}_2}\, dx_2\Bigg|\\
		& \leq C \|\nabla^{m-1}W_\ve\|_{L^{\infty}(\Oe)}\ \ve^2=\mathcal{O}(|\Oe|^{1+\delta}),
	\end{aligned}
	\end{equation}
where $\delta>0$. Inserting \eqref{eq:I1E^h} and \eqref{eq:I2E^h} in \eqref{eq:E^h} we get the assertion.	
\end{proof}

\subsection*{Spectral Values along $E^{m+1}$}
We will now show that the quadratic form associated to $\tensor$, $\tensor E\cdot E$, attains value
$\frac{1}{\kappa}$ along $E=E^{m+1}$. To prove this proposition we will need to show two preliminaries Lemmas.
The first one is an adaptation of \cite[Lemma 3]{CapVog06}.
\begin{lemma}
For all $E\in S^m(\mathbb{R}^2)$ we have
\begin{equation} \label{eq:CapVog06}
 (\kappa -1)  \tensor E \cdot E = \frac{\kappa -1}{\kappa} \abs{E}^2 +
         \lim_{\ve \to 0} \left(\frac{1}{{\abs{\Oe}}}
         \min_{\widetilde{W}\in H_0^m(\Omega)} \int_\Omega \gep
         \abs{\nabla^m \widetilde{W} + \frac{\kappa-1}{\kappa} \chi_\Oe E}^2\right).
        \end{equation}
\end{lemma}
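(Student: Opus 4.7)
The plan is to show that the minimizer of the functional on the right-hand side is precisely the function $W_\ve = V_\ve - V$ from \eqref{Weps_problem}, then explicitly evaluate the minimum and relate it to $\mathbb{M}E\cdot E$ via the identity already established in the proof of \autoref{BoundsPT}.

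First I would observe that, since $\gep \geq \kappa > 0$, the functional
$\widetilde{W}\mapsto \int_\Omega \gep |\nabla^m \widetilde{W} + \frac{\kappa-1}{\kappa}\chi_\Oe E|^2\,dx$ is strictly convex and coercive on $H_0^m(\Omega)$ (using Poincaré, \autoref{le:poincare}), so a unique minimizer $\widetilde{W}_\ve$ exists. Its Euler--Lagrange equation is
\begin{equation*}
\int_\Omega \gep\, \nabla^m \widetilde{W}_\ve \cdot \nabla^m\varphi\,dx
= -\tfrac{\kappa-1}{\kappa}\int_\Oe \gep\, E\cdot \nabla^m\varphi\,dx
= (1-\kappa)\int_\Oe E\cdot \nabla^m\varphi\,dx,
\end{equation*}
for every $\varphi\in H_0^m(\Omega)$, where in the last step I used $\gep = \kappa$ on $\Oe$. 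Comparing with the weak formulation \eqref{eq:weakWe} of \eqref{Weps_problem}, and recalling $\nabla^m V = E$ by \eqref{eq:E}, uniqueness of the solution forces $\widetilde{W}_\ve = W_\ve$.

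Next I would compute the minimum value by expanding the square. Setting $G := \nabla^m W_\ve + \frac{\kappa-1}{\kappa}\chi_\Oe E$,
\begin{equation*}
\int_\Omega \gep\,|G|^2\,dx
= \int_\Omega \gep\,|\nabla^m W_\ve|^2\,dx
+ \tfrac{2(\kappa-1)}{\kappa}\!\int_\Oe \!\gep\,\nabla^m W_\ve\cdot E\,dx
+ \tfrac{(\kappa-1)^2}{\kappa^2}\!\int_\Oe\! \gep\,|E|^2\,dx.
\end{equation*}
Using $\gep=\kappa$ on $\Oe$ in the last two integrals and the weak identity (with $\varphi = W_\ve$)
$\int_\Omega \gep\,|\nabla^m W_\ve|^2 dx = (1-\kappa)\int_\Oe E\cdot \nabla^m W_\ve\,dx$, the algebra collapses to
\begin{equation*}
\min_{\widetilde{W}\in H_0^m(\Omega)}\!\!\int_\Omega \gep\,|\nabla^m \widetilde{W} + \tfrac{\kappa-1}{\kappa}\chi_\Oe E|^2 dx
= (\kappa-1)\int_\Oe \!E\cdot \nabla^m W_\ve\,dx + \tfrac{(\kappa-1)^2}{\kappa}|\Oe|\,|E|^2.
\end{equation*}

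Finally I would divide by $|\Oe|$ and pass to the limit $\ve\to 0$. The identity \eqref{eq:can_write} with $\phi\equiv 1$, combined with \eqref{eq:gradmVgradmWe_res} and $\nabla^m V = E$, yields
$\lim_{\ve\to 0}\frac{1}{|\Oe|}\int_\Oe \nabla^m W_\ve\cdot E\,dx = \mathbb{M}E\cdot E - |E|^2$, so
\begin{equation*}
\lim_{\ve\to 0}\frac{1}{|\Oe|}\min = (\kappa-1)(\mathbb{M}E\cdot E - |E|^2) + \tfrac{(\kappa-1)^2}{\kappa}|E|^2
= (\kappa-1)\mathbb{M}E\cdot E - \tfrac{\kappa-1}{\kappa}|E|^2,
\end{equation*}
which is \eqref{eq:CapVog06} after rearrangement.

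The only delicate point is the identification $\widetilde{W}_\ve = W_\ve$: it rests on noticing that the source of $W_\ve$ is exactly $(1-\kappa)\chi_\Oe E = (1-\kappa)\chi_\Oe \nabla^m V$ and that $\gep = \kappa$ inside $\Oe$ produces the compensating factor. Everything else is bookkeeping with the identities already proved, so I do not anticipate a serious obstacle beyond keeping the signs of $\kappa-1$ straight.
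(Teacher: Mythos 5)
Your proposal is correct and follows essentially the same route as the paper's proof: both expand the square, use the weak formulation \eqref{eq:weakWe} with $\varphi=W_\ve$ to eliminate $\int_\Omega\gep|\nabla^m W_\ve|^2\,dx$, and invoke $\tensor E\cdot E = |E|^2 + \lim_{\ve\to 0}\frac{1}{|\Oe|}\int_\Oe \nabla^m W_\ve\cdot E\,dx$ (the paper's \eqref{eq:spect_val_Eh}, i.e.\ \eqref{eq:can_write} with $\phi\equiv 1$). The only difference is organizational: you identify $W_\ve$ as the unique minimizer up front via the Euler--Lagrange equation, whereas the paper computes the functional value at $W_\ve$ and only remarks at the end that $W_\ve$ minimizes; also, your citation of \eqref{eq:gradmVgradmWe_res} is superfluous since the needed limit is exactly \eqref{eq:can_write} with $\phi\equiv 1$.
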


\begin{proof}
Since $\gamma_{\epsilon} = \kappa$ in $\Oe$, see \eqref{eq:ve}, it follows by just squaring the quadratic term 
\begin{equation}\label{eq:auxWe}
 \begin{aligned}
  & \int_\Omega \gep  \abs{\nabla^m \Wep + \frac{\kappa-1}{\kappa} \chi_\Oe E}^2\dx \\
 =& \int_\Oe   \frac{(\kappa-1)^2}{\kappa} \abs{E}^2 \dx
     +\int_\Omega \gep  \abs{\nabla^m \Wep}^2 \dx + 2 (\kappa-1) \int_\Oe \nabla^m \Wep \cdot E \dx.
 \end{aligned}
\end{equation}
Then, using the weak formulation \eqref{eq:weakWe} of $W_\ve$, in which we choose $\varphi=W_\ve$ as a test function, we can rewrite the second integral in the right-hand side of the previous formula in the following way 
	\begin{equation}\label{eq:weakWeWe}
		\int_{\Omega}\gamma_\ve |\nabla^m W_\ve|^2\, dx=(1-\kappa)\int_{\Oe}\nabla^m W_\ve\cdot \nabla^m V\, dx= (1-\kappa)\int_{\Oe} \nabla^m W_\ve\cdot E\, dx,
	\end{equation}
where in the last equality we have used \eqref{eq:E}. 	
Inserting \eqref{eq:weakWeWe} in \eqref{eq:auxWe}, we find
	\begin{equation*}
		 \int_\Omega \gep  \abs{\nabla^m \Wep + \frac{\kappa-1}{\kappa} \chi_\Oe E}^2\dx=\abs{\Oe}\frac{(\kappa-1)^2}{\kappa}\abs{E}^2 + (\kappa-1)\int_\Oe \nabla^m \Wep \cdot E \dx,
	\end{equation*}
hence
\begin{equation*}
(\kappa-1) \int_\Oe \nabla^m \Wep\cdot E\, dx =
 -\abs{\Oe}\frac{(\kappa-1)^2}{\kappa}\abs{E}^2 +
  \int_\Omega \gep  \abs{\nabla^m \Wep + \frac{\kappa-1}{\kappa} \chi_\Oe E}^2.
\end{equation*}
Then, inserting this expression into \eqref{eq:spect_val_Eh}, we get
\begin{equation*}
(\kappa-1)\tensor E\cdot E = (\kappa-1)\abs{E}^2
- \frac{(\kappa-1)^2}{\kappa}\abs{E}^2
+ \lim_{\ve \to 0} \frac{1}{\abs{\Oe}}\int_\Omega \gep
        \abs{\nabla^m \Wep + \frac{\kappa-1}{\kappa} \chi_\Oe E}^2 \dx.
\end{equation*}
Finally, noting that $\Wep$ is the minimizer of the functional
$\widetilde{W} \in H_0^m(\Omega) \to \int_\Omega \gep
         \abs{\nabla^m \widetilde{W} + \frac{\kappa-1}{\kappa} \chi_\Oe E}^2$ the assertion follows.
\end{proof}
Next, we have that

\begin{lemma}\label{le:he}
There exists a function $ \owep \in H^m_0(\Omega)$ such that
 \begin{equation*}
   \int_\Omega \abs{\nabla^m  \owep-\chi_\Oe \nabla^mx_2^m}^2 \dx = o(\abs{\Oe}),\qquad \text{ for } \ve \to 0.
 \end{equation*}
\end{lemma}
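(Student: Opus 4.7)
The plan is to build $\overline{w}_\varepsilon$ explicitly via an anisotropic rescaling that matches the aspect ratio of the thin strip. Introduce scaled variables $y_1 = x_1/\varepsilon$, $y_2 = x_2/\varepsilon^2$, under which $\Omega_\varepsilon$ rescales to a domain converging to the unit square $[-1,1]^2$ (modulo caps of $y$-area $O(\varepsilon)$), with Jacobian $dx = \varepsilon^3\, dy$. Writing $\overline{w}_\varepsilon(x) = \varepsilon^{2m}\,\hat{w}(x_1/\varepsilon,\, x_2/\varepsilon^2)$ for a compactly supported $\hat w$ to be chosen, the chain rule gives
\begin{equation*}
\partial_{x_1}^k \partial_{x_2}^{m-k}\overline{w}_\varepsilon(x) = \varepsilon^k\,\partial_{y_1}^k \partial_{y_2}^{m-k}\hat{w}(y),
\end{equation*}
so every $x_1$-derivative contributes an extra factor $\varepsilon$; this favorable $\varepsilon^{2k}$ weighting of the mixed tensor components (after squaring) is the mechanism that makes the $o(|\Omega_\varepsilon|)$ bound achievable.

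For $\hat w$ I would use the separable ansatz $\hat w(y_1,y_2) = \xi(y_1)\,\hat g(y_2)$ with two scale parameters $T = T(\varepsilon)$ and $R = R(\varepsilon)$. The cutoff $\xi \in C_c^\infty(\mathbb{R})$ equals $1$ on $[-1,1]$ and is supported in $[-1-T, 1+T]$, so that $\|\xi^{(k)}\|_{L^2}^2 = O(T^{1-2k})$. The profile $\hat g \in H_0^m([-R,R])$ satisfies $\hat g^{(m)}(y_2) = m!\,\chi_{[-1,1]}(y_2) + r(y_2)$, where $r$ is symmetric, supported in $[-R,-1]\cup[1,R]$, and chosen so that $\int y_2^j\,\hat g^{(m)}\,dy_2 = 0$ for $0 \leq j \leq m-1$ (the compact-support condition on $\hat g$); a standard construction gives $\|r\|_\infty,\,\|r\|_{L^2}^2 = O(R^{-1})$. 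Integrating $\hat g^{(m)}$ repeatedly against the compact support yields the a priori estimates $\|\hat g^{(m-k)}\|_{L^2}^2 = O(R^{2k-1})$ for $k = 1, \ldots, m$.

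Decomposing $|\nabla^m \overline w_\varepsilon - \chi_{\Omega_\varepsilon}\nabla^m x_2^m|^2$ into its $(2,\ldots,2)$-entry plus the $\binom{m}{k}$ mixed entries, the rescaled integral produces a $(2,\ldots,2)$-contribution of size $O(T) + O(R^{-1}) + O(\varepsilon)$ (from the $\xi$-transition, from $r$, and from the caps respectively) and a $k$-th mixed contribution of size $O(\varepsilon^{2k}\, T^{1-2k}\, R^{2k-1})$. Reinstating the $\varepsilon^3$ Jacobian and choosing $T = R^{-1} = \varepsilon^{1/3}$ gives a $(2,\ldots,2)$-error of $O(\varepsilon^{10/3})$ and a $k$-th mixed error of $O(\varepsilon^{(2k+11)/3})$; all are $o(\varepsilon^3) = o(|\Omega_\varepsilon|)$ for every $m \geq 1$. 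Since $\hat w$ is compactly supported in $y$, the function $\overline w_\varepsilon$ is supported in a rectangle of $x$-dimensions of order $\varepsilon \times \varepsilon^{5/3}$, which lies inside $L_0$ for $\varepsilon$ small, so $\overline w_\varepsilon \in C_c^\infty(\Omega) \subset H^m_0(\Omega)$.

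The main obstacle is the scale balance. A naive separable construction $\zeta(x_1) G(x_2)$ in the original coordinates yields a $(2,\ldots,2)$ transition error of order $O(a\varepsilon^2)$ in the $x_1$-transition of width $a$ (forcing $a = o(\varepsilon)$), against mixed-derivative errors behaving like $a^{1-2k}$ (forcing $a = \omega(\varepsilon^{1/(2k-1)}) \geq \omega(\varepsilon)$), and these constraints are incompatible for every $m \geq 1$. The anisotropic rescaling rescues the situation by generating the $\varepsilon^k$ weighting of the mixed components; the delicate accounting is then the moment compensation for $\hat g$, trading the $O(R^{-1})$ residual of $r$ in the $(2,\ldots,2)$-component against the $O(R^{2k-1})$ polynomial growth of $\|\hat g^{(m-k)}\|_{L^2}^2$ in the lower-order terms.
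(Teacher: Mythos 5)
Your construction is correct and follows the same separable-product strategy as the paper: a test function of the form (profile in $x_2$)$\times$(cutoff in $x_1$), with transition widths balanced so that every tensor component of the error is $o(\ve^3)$. The paper works in the original variables, taking $\owep = \overline{v}_\ve(x_2)\psi_\ve(x_2)\varphi_\ve(x_1)$, where $\overline{v}_\ve$ is the explicit $H^m$ polynomial continuation of $x_2^m$ (whose $m$-th derivative vanishes for $|x_2|>\ve^2$), $\psi_\ve$ is a cutoff of width $\ve^{2a}$, and $\varphi_\ve$ a cutoff of width $\ve^{2\beta}$, with $(a,\beta)$ in a two-parameter admissible region. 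Your version passes to the anisotropic variables $y=(x_1/\ve, x_2/\ve^2)$, which is a change of bookkeeping rather than of substance, and replaces the $x_2$-cutoff by a moment-compensating remainder $r$: prescribing $\hat g^{(m)} = m!\chi_{[-1,1]}+r$ with $\int y_2^j\hat g^{(m)}\,dy_2=0$ for $0\le j\le m-1$ enforces the compact support of $\hat g$ without a multiplicative cutoff in $y_2$. That is a clean alternative, and your norm estimates for $r$ and for the antiderivatives $\hat g^{(m-k)}$ do hold; the choice $T=R^{-1}=\ve^{1/3}$ corresponds to $\beta=2/3$, $a=5/6$ in the paper's notation and yields exactly the exponents you report, all strictly larger than $3$. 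Two caveats. First, $\owep\in C^\infty_c(\Omega)$ is not correct: $\hat g^{(m)}$ is discontinuous, so $\owep$ is only $W^{m,\infty}$ with compact support, which is all the lemma needs ($\owep\in H^m_0(\Omega)$). Second, your ``main obstacle'' paragraph is misleading: in the original coordinates the $k$-th mixed error is $a^{1-2k}\ve^{4k+2}$, not $a^{1-2k}$ — you dropped the factor $\ve^{4k}$ coming from $G^{(m-k)}(x_2)\sim x_2^{k}\sim\ve^{2k}$ on the strip — and once that factor is restored the two constraints are compatible, as the paper's construction in the unscaled variables demonstrates. The anisotropic rescaling is a helpful normalization, not a rescue.
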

To prove this lemma, we first need to introduce some parameters and three functions, defined on the real axis, which are involved in the definition of the function $\owep$:
\begin{enumerate}[(i)]
	\item Let  
	\begin{equation}\label{eq:a}
	a\in \left(\frac{m-1}{2m-1},\frac{4m-3}{2(2m-1)}\right)
	\end{equation}
	 and let $\psi_\ve \in C^\infty_0(\R)$ (see \cite{Nes03}) be the function which satisfies for some constant $C_1>0$
	\begin{enumerate}
		\item $\psi_\ve(x_2)=1;\qquad$
		$\forall x_2\in [-\ve^2,\ve^2]$;
		\item \label{it:psi:ii} $\text{supp}(\psi_\ve) \subset (-\ve^2-\ve^{2a},\ve^2+\ve^{2a})$;
		\item $0 \leq \psi_\ve \leq 1$;
		\item \label{it:psi:iv}
		$\abs{\frac{d\psi_\ve}{d x_2}} \leq \frac{C_1}{\ve^{2a}},\ \ \abs{\frac{d^2\psi_\ve}{d x^2_2}} \leq \frac{C_1}{\ve^{4a}},\cdots, \abs{\frac{d^m\psi_\ve}{d x^m_2}} \leq \frac{C_1}{\ve^{2ma}}$.
	\end{enumerate}
	\item Moreover, let  
	\begin{equation}\label{eq:beta}
	\beta\in\left(\frac{1}{2}, \frac{1}{2(2m-1)}+a\right),
	\end{equation}
	 and let $\varphi_\ve\in C^{\infty}_0(\mathbb{R})$ (see \cite{Nes03}) be the function which satisfies for some constant $C_2>0$
	\begin{enumerate}
		\item  \label{it:ve:i}$\varphi_\ve(x_1) = 1,\qquad$  $\forall x_1\in [-\ve,\ve]$;
		\item  \label{it:ve:ii}$ \text{supp}(\varphi_\ve) \subset (-\ve-\ve^{2\beta},\ve+\ve^{2\beta})$;
		\item \label{it:ve:iii}$0 \leq \varphi_{\ve} \leq 1$;
		\item  \label{it:ve:iv} $\abs{\frac{d\varphi_\ve}{d x_1}} \leq \frac{C_2}{\ve^{2\beta}},\ \ \abs{\frac{d^2\varphi_\ve}{d x^2_1}} \leq \frac{C_2}{\ve^{4\beta}},\cdots, \abs{\frac{d^m\varphi_\ve}{d x^m_1}} \leq \frac{C_2}{\ve^{2m\beta}}$.         
	\end{enumerate}
	\begin{remark}
	Parameters $a$ and $\beta$ vary in the triangular orange region in Figure \ref{Fig:lines}.
	\end{remark}
	\item Finally, let $ \overline{v}_\ve: \R \to \R$ be defined as follows:
	\begin{equation} \label{eq:vve}
	\overline{v}_\ve(x_2) :=
	\begin{cases}
	\vspace{0.2cm}
	\sum_{h=1}^{m}\frac{m!(-1)^{h-1} \ve^{2h} x^{m-h}_2}{h! (m-h)!}, & \textrm{if}\, x_2>\ve^2\\
	\vspace{0.2cm}
	x^m_2, & \textrm{if}\, x_2\in[-\ve^2,\ve^2]\\
	\sum_{h=1}^{m}\frac{m!(-1)^{2h-1} \ve^{2h} x^{m-h}_2}{h! (m-h)!}, & \textrm{if}\, x_2<-\ve^2.
	\end{cases}      
	\end{equation}	
\end{enumerate}
\begin{remark}
Function $\overline{v}_\ve$ is an element of $H^m$ on every compact interval of $\R$. To show that, it is sufficient to prove that $\overline{v}_\ve$ and its derivatives $\frac{d^{l}\overline{v}_\ve}{d x_2^l}$, for $l=1,\cdots,m-1$, are continuous polynomial functions across $x_2=\ve^2$ and $x_2=-\ve^2$. With this aim, we first notice that in $\{x_2>\ve^2\}$ 
	\begin{equation}\label{eq: veps_mod}
		\overline{v}_\ve(x_2)=\sum_{h=1}^{m}\frac{m!(-1)^{h-1} \ve^{2h} x^{m-h}_2}{h! (m-h)!}=\sum_{h=1}^{m}(-1)^{h-1}\binom{m}{h}\ve^{2h} x^{m-h}_2,\qquad \text{for}\,\, x_2>\ve^2,
	\end{equation}
and, analogously, the derivatives of $\overline{v}_\ve$ in $\{x_2>\ve^2\}$ satisfy	
	\begin{equation*}
	\begin{aligned}
		\frac{d^{l}\overline{v}_\ve}{d x_2^l}(x_2)&=\sum_{h=1}^{m-l}\frac{m!(-1)^{h-1} (m-h)(m-h-1)\cdots(m-h-l+1)\ve^{2h} x^{m-h-l}_2}{h! (m-h)!}\\
		&=m!\sum_{h=1}^{m-l}\frac{(-1)^{h-1}\ve^{2h}x^{m-h-l}_2}{h!(m-h-l)!},\qquad\qquad\qquad \text{for}\,\, x_2>\ve^2.
	\end{aligned}
	\end{equation*}
Multiplying and dividing for $(m-l)!$ in the last expression, we get
	\begin{equation}\label{eq: der_veps_mod}
	\begin{aligned}
		\frac{d^{l}\overline{v}_\ve}{d x_2^l}(x_2)&=\frac{m!}{(m-l)!}\sum_{h=1}^{m-l}(-1)^{h-1}\frac{(m-l)!}{h!(m-l-h)!}\ve^{2h}x_2^{m-h-l}\\
		&=m(m-1)\cdots(m-l+1)\sum_{h=1}^{m-l}(-1)^{h-1}\binom{m-l}{h}\ve^{2h}x_2^{m-h-l},\qquad \text{for}\,\, 1\leq l\leq m-1.
	\end{aligned}
	\end{equation}
From \eqref{eq: veps_mod} and \eqref{eq: der_veps_mod} it is straightforward to check that both $\overline{v}_\ve$ and its derivates $\frac{d^{l}\overline{v}_\ve}{d x_2^l}$ are continuous functions across $x_2=\ve^2$. In fact, 
	\begin{equation}\label{eq:cont_veps}
		\overline{v}_\ve(\ve^2)=-\ve^{2m}\sum_{h=1}^{m}(-1)^h \binom{m}{h}=-\ve^{2m}\left[-1+\sum_{h=0}^{m}(-1)^h \binom{m}{h}\right]=\ve^{2m}
	\end{equation}
where in the last equality we used the well-known fact that $\sum_{h=0}^{m}(-1)^h \binom{m}{h}=0$. Analogously, for all $1\leq l\leq m-1$, we have
	\begin{equation}\label{eq:cont_der_veps}
	\begin{aligned}
		\frac{d^{l}\overline{v}_\ve}{d x_2^l}(\ve^2)&=m(m-1)\cdots(m-l+1)\ve^{2(m-l)}\sum_{h=1}^{m-l}(-1)^{h-1}\binom{m-l}{h}\\
		&=-m(m-1)\cdots(m-l+1)\ve^{2(m-l)}\sum_{h=1}^{m-l}(-1)^{h}\binom{m-l}{h}\\
		&=-m(m-1)\cdots(m-l+1)\ve^{2(m-l)}\left[-1+\sum_{h=0}^{m-l}(-1)^{h}\binom{m-l}{h}\right]\\
		&=m(m-1)\cdots(m-l+1)\ve^{2(m-l)},
	\end{aligned}
	\end{equation}	
where in the last equality we used again the property $\sum_{h=0}^{m-l}(-1)^{h}\binom{m-l}{h}=0$. Values in \eqref{eq:cont_veps} and \eqref{eq:cont_der_veps} coincide with that ones assumed by $x_2^m$ and its derivatives $\frac{d^{l} x_2^m}{d x_2^l}$, for $l=1,\cdots,m-1$, across $x_2=\ve^2$. The same argument can be applied for $\overline{v}_\ve(x_2)$ in $\{x_2<-\ve^2\}$. 		
\end{remark}

\begin{proof}[of Lemma \ref{le:he}]
 In the following we denote by
 \begin{equation}\label{eq:rve}
  R_\ve=\set{(x_1,x_2):-\ve-\ve^{2\beta} \leq x_1 \leq \ve+\ve^{2\beta},-\ve^2 -\ve^{2a}\leq x_2\leq \ve^2+\ve^{2a}}
 \end{equation}
with $a$ and $\beta$ chosen as in \eqref{eq:a} and \eqref{eq:beta}, respectively. Note that for $\ve$ sufficiently small $R_\ve \subset \Omega$.

 \item We define a test-function $\owep : \R^2 \to \R$ as follows:
 \begin{equation} \label{eq:wep}
 \owep(x_1,x_2)= \overline{v}_\ve(x_2)\psi_\ve(x_2)\varphi_\ve(x_1).
 \end{equation}
 Function $ \owep$ satisfies the following properties:
 \begin{enumerate}[(a)]
  \item $ \owep\in H^m_0(\Omega)$,\label{it:wepi}
  \item  $ \owep(x_1,x_2)=
    	\begin{cases}
    	x_2^m & \textrm{in}\ \Oep\\
    	x_2^m\varphi_\ve(x_1) & \textrm{in}\ \COe
    	\end{cases}
    $.\label{it:wepii}	
    \end{enumerate}
We can now prove the assertion of the Lemma. 
From the definition of $\owep$ and property \ref{it:wepii}, it follows that
\begin{equation} \label{eq:int_0}
 \begin{aligned}
  \int_\Omega \abs{\nabla^m \owep-\chi_\Oe \nabla^mx_2^m}^2 \dx
  &=\int_\Oe \abs{\nabla^m \owep-\nabla^mx_2^m}^2 \dx+ \int_\OOe\abs{\nabla^m \owep}^2 \dx\\
  &=\int_\COe \abs{\nabla^m \owep-\nabla^mx_2^m}^2 \dx+\int_\OOe\abs{\nabla^m \owep}^2 \dx=:J_1+J_2.
 \end{aligned}
\end{equation}
Now, we estimate $J_1$ and $J_2$.\\
\textit{Estimate of $J_1$.} For the integral $J_1$ we first use the property \ref{it:wepii} of $\owep$, hence we find 
\begin{equation} \label{eq:int}
 \int_\COe \abs{\nabla^m \owep-\nabla^mx_2^m}^2 \dx = \int_\COe \abs{\nabla^m(x_2^m(\varphi_\ve-1))}^2 \dx.
\end{equation}
By properties \eqref{it:ve:i}-\eqref{it:ve:iv} of $\varphi_\ve$ and the fact that $|x_2|\leq \ve^2$, we observe that 
	\begin{equation*}
		\abs{\nabla^m(x_2^m(\varphi_\ve-1))}^2\leq C\left(\sum_{n=0}^{m}|x_2|^n \Bigg|\frac{d^{n}\varphi_\ve}{d x_1^n}\Bigg|\right)^2\leq C \sum_{n=0}^{m}\ve^{4n-4n\beta},
	\end{equation*}
hence, using this last inequality in \eqref{eq:int} and the fact that $|\Oe\setminus\Oep|\leq C \ve^4$, we get
	\begin{equation*}
		\int_\COe \abs{\nabla^m(x_2^m(\varphi_\ve-1))}^2 \dx\leq C |\Oe\setminus\Oep| \sum_{n=0}^{m}\ve^{4n-4n\beta} \leq C \sum_{n=0}^{m}\ve^{4+4n-4n\beta}=o(|\Oe|)
	\end{equation*}	
where the last equality derives from the fact that, for the range of $a$ chosen, we have that $\beta<1$ hence $4+4n-4n\beta>4$. Then it is sufficient to recall that $\ve^3 = \mathcal{O}(\abs{\Oe})$, see  \eqref{eq:area_incl}.\\
\textit{Estimate of $J_2$.} Taking into account the definitions of $\Oe$ and $ \Oep$ (see \autoref{fig:L}) it follows from the properties of $\owep$ and the definition of $R_\ve$, see \eqref{eq:rve}, that
\begin{equation*}
 \int_\OOe \abs{\nabla^m \owep}^2 \dx \leq \int_\OOep \abs{\nabla^m \owep}^2 \dx=\int_{R_\ve \backslash  \Oep}\abs{\nabla^m \owep}^2 \dx.
\end{equation*}
We divide $R_\ve$ into eight parts, consisting of
\begin{equation*}
 \begin{aligned}
  R_\ve^1 &:= \set{(x_1,x_2): -\ve\leq x_1\leq \ve, \ve^2\leq x_2\leq \ve^2+\ve^{2a}} \subseteq \Omega \backslash \Oe,\\
  R_\ve^2 &:=\set{(x_1,x_2): \ve\leq x_1\leq \ve+\ve^{2\beta}, \ve^2\leq x_2\leq \ve^2+\ve^{2a}} \subseteq \Omega \backslash \Oe,\\
  R_\ve^3 &:=\set{(x_1,x_2): \ve\leq x_1\leq \ve+\ve^{2\beta}, -\ve^2\leq x_2\leq \ve^{2}} \subseteq \Omega \backslash  \Oep,
 \end{aligned}
\end{equation*}
and its symmetric counterparts (see \autoref{fig:rve}).
\begin{figure}
\begin{center}
\begin{tikzpicture}
\filldraw[color=red!60, fill=red!5, very thick] (0,0.08\textwidth) arc[radius = 0.08\textwidth, start angle= 90, end angle= 270]--cycle
(0,-0.08\textwidth) -- (0,0.08\textwidth);
\filldraw[color=red!60, fill=red!5, very thick] (0.25\textwidth,-0.08\textwidth)
arc[radius = 0.08\textwidth, start angle= -90, end angle= 90]--cycle (0.25\textwidth,-0.08\textwidth) --(0.25\textwidth,0.08\textwidth);
\filldraw[color=green, fill=green, very thick]
(0,0.08\textwidth) -- (0.25\textwidth,0.08\textwidth) (0.25\textwidth,0.08\textwidth)--(0.25\textwidth,-0.08\textwidth)
(0.25\textwidth,-0.08\textwidth) -- (0,-0.08\textwidth) (0,-0.08\textwidth) -- (0,0.08\textwidth);
\filldraw[color=blue, fill=blue, very thick]
(-0.13\textwidth,0.11\textwidth) -- (0.38\textwidth,0.11\textwidth)
(0.38\textwidth,0.11\textwidth)--(0.38\textwidth,-0.11\textwidth)
(0.38\textwidth,-0.11\textwidth) -- (-0.13\textwidth,-0.11\textwidth)
(-0.13\textwidth,-0.11\textwidth) -- (-0.13\textwidth,0.11\textwidth);
\filldraw[color=blue, fill=blue]
(-0.0\textwidth,-0.11\textwidth) -- (-0.0\textwidth,0.11\textwidth)
(-0.13\textwidth,0.08\textwidth)--(0.0\textwidth,0.08\textwidth)
(-0.13\textwidth,-0.08\textwidth)--(0.0\textwidth,-0.08\textwidth)
(0.25\textwidth,-0.11\textwidth) -- (0.25\textwidth,0.11\textwidth)
(0.25\textwidth,0.08\textwidth)--(0.38\textwidth,0.08\textwidth)
(0.25\textwidth,-0.08\textwidth)--(0.38\textwidth,-0.08\textwidth);
\draw (0.33\textwidth,0.0\textwidth) node[anchor=west] [color=blue]{$R_\ve^3$};
\draw (0.3\textwidth,0.095\textwidth) node[anchor=west] [color=blue]{$R_\ve^2$};
\draw (0.1\textwidth,0.095\textwidth) node[anchor=west] [color=blue]{$R_\ve^1$};
\draw (0.05\textwidth,0.07\textwidth) node[anchor=west] [color=green]{$\ve$};
\draw (0.18\textwidth,0.07\textwidth) node[anchor=west] [color=green]{$\ve$};
\draw (-0.07\textwidth,0.13\textwidth) node[anchor=west] [color=blue]{$\ve^{2\beta}$};
\draw (0.27\textwidth,0.13\textwidth) node[anchor=west] [color=blue]{$\ve^{2\beta}$};
\draw (0.38\textwidth,0.095\textwidth) node[anchor=west] [color=blue]{$\ve^{2a}$};
\draw (0.38\textwidth,-0.095\textwidth) node[anchor=west] [color=blue]{$\ve^{2a}$};
\draw (0.1\textwidth,0.0\textwidth) node[anchor=west] [color=green]{$ \Oep$};
\draw (-0.072\textwidth,0.0\textwidth) node[anchor=west] [color=red]{$\Oe \backslash  \Oep$};
\draw (0.22\textwidth,0.03\textwidth) node[anchor=west] [color=green]{$\ve^2$};
\draw (0.22\textwidth,-0.03\textwidth) node[anchor=west] [color=green]{$\ve^2$};
\end{tikzpicture}
\end{center}
\caption{\label{fig:rve} $R_\ve$ is divided into $9$ parts. The central object is $ \Oep$.}
\end{figure}
Then we estimate the integrals over these subdomains separately; however
because of symmetry of $\owep$ it suffices to only estimate the integrals over $R_\ve^i$, $i=1,2,3$.
\begin{enumerate}
\item Estimate of $\int_{R_\ve^1}\abs{\nabla^m \owep}^2 \dx$.
Using the fact that $\varphi_\ve=1$ in $R_\ve^1 \subseteq \Omega \backslash \Oe$ and the definition of $\overline{v}_\ve$ when $x_2>\ve^2$, i.e. $\overline{v}_\ve=\sum_{h=1}^{m}\frac{m!(-1)^{h-1} \ve^{2h} x^{m-h}_2}{h! (m-h)!}$, it follows that every term in $\nabla^m \owep$ containing at least one derivative with respect to $x_1$ is zero. Therefore, the only non zero term is given by
	\begin{equation}\label{eq:owep}
		\frac{\partial^m \owep}{\partial x_2^m}=\sum_{n=1}^{m-1}\left(\frac{d^n\overline{v}_\ve}{dx^n_2}\ \frac{d^{m-n} {\psi}_\ve}{dx^{m-n}_2}\right)+ \overline{v}_\ve \frac{d^{m} {\psi}_\ve}{dx^{m}_2}
	\end{equation}
where we have used the fact that $\frac{d^m\overline{v}_\ve}{dx^m_2}\psi_\ve=0$, since the polynomial $\overline{v}_\ve$ has degree $m-1$. 
Then, from \eqref{eq:owep}, we get
	\begin{equation}\label{eq:est_owep_R1}
		\Bigg|\frac{\partial^m \owep}{\partial x_2^m}\Bigg|\leq C \sum_{n=1}^{m-1} \Bigg|\frac{d^n\overline{v}_\ve}{dx^n_2}\Bigg|\ \Bigg|\frac{d^{m-n} {\psi}_\ve}{dx^{m-n}_2}\Bigg|+ |\overline{v}_\ve| \Bigg|\frac{d^{m} {\psi}_\ve}{dx^{m}_2}\Bigg|.
	\end{equation}
We first observe that, for $n=0,\cdots,m-1$, we get
	\begin{equation}\label{eq:der_v_ve}
	\Bigg|\frac{d^n\overline{v}_\ve}{d x^n_2}\Bigg|\leq C\sum_{h=1}^{m-n}\ve^{2h}\ |x_2|^{m-h-n}.
	\end{equation}
In addition, from the fact that $\ve^2<x_2<\ve^2+\ve^{2a}$
	\begin{equation*}
	|x_2|^{m-h-n}<(\ve^2+\ve^{2a})^{m-h-n}=\sum_{q=0}^{m-h-n}\ve^{2q}\ve^{2a(m-h-n-q)}.
	\end{equation*}
In the last sum, since $a<1$ for hypothesis, see \eqref{eq:a}, we take the power with minimum exponent which corresponds to the term with $q=0$, i.e.,  $\ve^{2a(m-h-n)}$, for $n=0,\cdots,m-1$ and $h=1,\cdots,m-n$, since all the other terms contain, at least, powers of $\ve^2$, hence 
	\begin{equation}\label{eq:estim1}
	|x_2|^{m-h-n}<(\ve^2+\ve^{2a})^{m-h-n}\leq C\ve^{2a(m-h-n)}.
	\end{equation}
Therefore, inserting \eqref{eq:estim1} in \eqref{eq:der_v_ve}, we find
	\begin{equation*}
	\Bigg|\frac{d^n\overline{v}_\ve}{d x^n_2}\Bigg|\leq C\sum_{h=1}^{m-n}\ve^{2h}\ve^{2a(m-h-n)},
	\end{equation*}
where, again, the maximum term corresponds to that one with minimum exponent, i.e., the index $h=1$, hence 
	\begin{equation}\label{eq:estimate_deriv}
		\Bigg|\frac{d^n\overline{v}_\ve}{dx^n_2}\Bigg|\leq C \ve^{2+2(m-1-n)a},\qquad \textrm{where}\,\, 0\leq n \leq m-1.
	\end{equation}
Using this last estimate, \eqref{eq:estimate_deriv}, in \eqref{eq:est_owep_R1} together with \eqref{it:psi:iv}, we find that 
	\begin{equation}\label{eq:est_R1_eps}
		\begin{aligned}
		\Bigg|\frac{d^n\overline{v}_\ve}{dx^n_2}\Bigg|\ \Bigg|\frac{d^{m-n} {\psi}_\ve}{dx^{m-n}_2}\Bigg|&\leq C \ve^{2-2a}\\
		|\overline{v}_\ve| \Bigg|\frac{d^{m} {\psi}_\ve}{dx^{m}_2}\Bigg|&\leq C \ve^{2-2a}.
		\end{aligned}
	\end{equation}
Therefore, from \eqref{eq:est_R1_eps} and \eqref{eq:est_owep_R1}, we get
	\begin{equation}\label{eq:R1}
		\int_{R_\ve^1}\abs{\nabla^m \owep}^2 \dx\leq C |R^1_\ve|\ \ve^{4-4a}= C \ve^{5-2a},
	\end{equation}
where we have used the fact that $|R^1_\ve|=2 \ve^{1+2a}$. Observe that, since $a<1$ for hypothesis, we have that $5-2a>3$, hence \eqref{eq:R1} gives
	\begin{equation}
	\int_{R_\ve^1}\abs{\nabla^m \owep}^2 \dx=o(|\Oe|).
	\end{equation}
\item Estimate of $\int_{R^2_\ve}\abs{\nabla^m \owep}^2 \dx.$ 
In this case $\owep(x_1,x_2)=\overline{v}_\ve(x_2) \psi_\ve(x_2) \varphi_\ve(x_1)$, where $\overline{v}_\ve$ is the polynomial of degree $m-1$ in $x_2>\ve^2$, i.e. $\overline{v}_\ve=\sum_{h=1}^{m}\frac{m!(-1)^{h-1} \ve^{2h} x^{m-h}_2}{h! (m-h)!}$. Then
	\begin{equation}
		|\nabla^m \owep|\leq C\Bigg| \sum_{n=0}^{m}\frac{d^{n}(\overline{v}_\ve\psi_\ve)}{d x^n_2}\ \ \frac{d^{m-n}\varphi_\ve}{d x_1^{m-n}}\Bigg|.
	\end{equation}	
In the previous equation, recalling that $\frac{d^m \overline{v}_\ve}{d x^m_2}=0$, we split the term related to the maximum order of derivative, i.e. $ \frac{d^{m}(\overline{v}_\ve\psi_\ve)}{d x^m_2}$, from the terms with derivative of lower orders, hence we have that	
	\begin{equation*}
	\begin{aligned}
		|\nabla^m \owep|&\leq C\Bigg| \sum_{n=0}^{m}\frac{d^{n}(\overline{v}_\ve\psi_\ve)}{d x^n_2}\ \ \frac{d^{m-n}\varphi_\ve}{d x_1^{m-n}}\Bigg|\\
		&=\Bigg| \varphi_\ve \sum_{r=0}^{m-1}\binom{m}{r} \frac{d^r \overline{v}_\ve}{d x^r_2} \frac{d^{m-r} \psi_\ve}{d x^{m-r}_2}+                            \sum_{n=0}^{m-1}\sum_{r=0}^{n}\binom{n}{r}\frac{d^r\overline{v}_\ve}{d x^r_2}\ \frac{d^{n-r}\psi_\ve}{d x^{n-r}_2}\ \frac{d^{m-n}\varphi_\ve}{d x^{m-n}_1}\Bigg|\\
		&\leq C\left[ |\varphi_\ve|\sum_{r=0}^{m-1}\Bigg|\frac{d^r \overline{v}_\ve}{d x^r_2}\Bigg|\ \Bigg|\frac{d^{m-r} \psi_\ve}{d x^{m-r}_2}\Bigg| + \sum_{n=0}^{m-1}\sum_{r=0}^{n}\Bigg|\frac{d^r\overline{v}_\ve}{d x^r_2}\Bigg|\ \Bigg|\frac{d^{n-r}\psi_\ve}{d x^{n-r}_2}\Bigg|\ \Bigg|\frac{d^{m-n}\varphi_\ve}{d x^{m-n}_1}\Bigg|\right]\\
		&\leq C\left[\sum_{r=0}^{m-1}\Bigg|\frac{d^r \overline{v}_\ve}{d x^r_2}\Bigg|\ \Bigg|\frac{d^{m-r} \psi_\ve}{d x^{m-r}_2}\Bigg| + \sum_{n=0}^{m-1}\sum_{r=0}^{n}\Bigg|\frac{d^r\overline{v}_\ve}{d x^r_2}\Bigg|\ \Bigg|\frac{d^{n-r}\psi_\ve}{d x^{n-r}_2}\Bigg|\ \Bigg|\frac{d^{m-n}\varphi_\ve}{d x^{m-n}_1}\Bigg|\right]=:S_1+S_2.\\
	\end{aligned}
	\end{equation*}
Using \eqref{it:psi:iv} and \eqref{eq:estimate_deriv}, we find that
	\begin{equation*}
		S_1\leq C \ve^{2-2a}.
	\end{equation*}
Analogously, by means of \eqref{it:psi:iv}, \eqref{it:ve:iv} and \eqref{eq:estimate_deriv} we get that
	\begin{equation*}
		S_2\leq C \sum_{n=0}^{m-1}\ve^{2+2a(m-1)-2an-2\beta(m-n)}. 
	\end{equation*}
Then, using the fact that $|R^2_\ve|=\ve^{2a+2\beta}$, we find
	\begin{equation}\label{eq:estimate_R2_ve}
	\begin{aligned}
		\int_{R^2_\ve}\abs{\nabla^m \owep}^2 \dx &\leq C\Big(\ve^{4-2a+2\beta}+ \sum_{n=0}^{m-1}\ve^{4+4a(m-1)-4an-4\beta(m-n)+2a+2\beta}\Big)\\
		&=C\Big(\ve^{4-2a+2\beta}+ \sum_{n=0}^{m-1}\ve^{4+2(a-\beta)(2m-2n-1)}\Big). 
	\end{aligned}
	\end{equation} 
Therefore, from the hypothesis made for $a$ and $\beta$, see \eqref{eq:a} and \eqref{eq:beta}, we find that all the exponents in the previous formula are greater than $3$, i.e., $4-2a+2\beta>3$ and $4+2(a-\beta)(2m-2n-1)>3$. Indeed, $4-2a+2\beta>3$ is equivalent to $\beta>-\frac{1}{2}+a$, hence we immediately observe that, in Figure \ref{Fig:lines}, the orange region, where $a$ and $\beta$ vary, satisfies $\beta>-\frac{1}{2}+a$. On the other hand, condition $4+2(a-\beta)(2m-2n-1)>3$ is equivalent to $\beta\leq \frac{1}{2(2m-2n-1)}+a$, for all $n=0,\cdots,m-1$. We observe that the function $h(n):=\frac{1}{2(2m-2n-1)}+a$, is an increasing function with respect to $n$, hence the minimum value is $h(0)=\frac{1}{4m-2}+a$, see the red line in Figure \ref{Fig:lines}, which corresponds to the upper bound for $\beta$ in \eqref{eq:beta}. Even in this case, the orange region satisfies the required condition $\beta\leq \frac{1}{2(2m-2n-1)}+a$, for all $n=0,\cdots,m-1$.  
\begin{figure}[h!]
	\centering
	\includegraphics[scale=0.5]{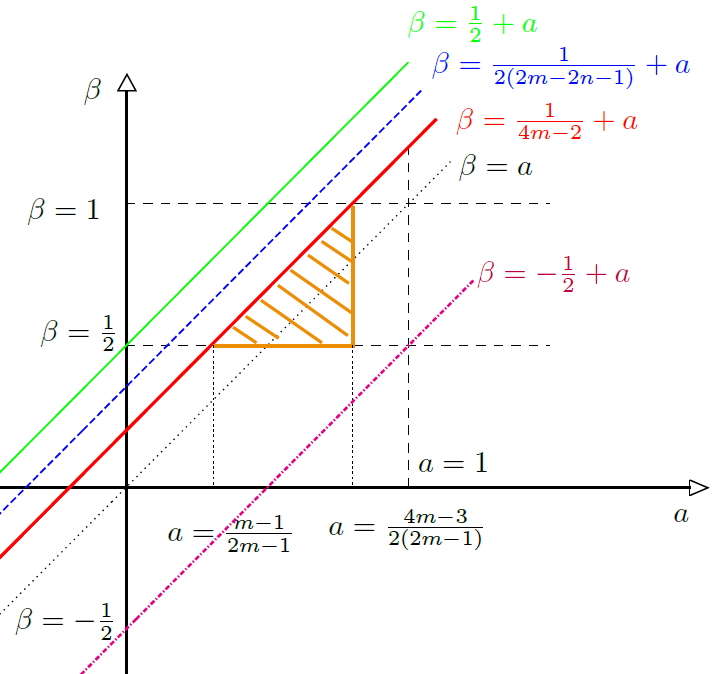}
	\caption{Parameters $a$ and $\beta$, see \eqref{eq:a} and \eqref{eq:beta}, vary in the orange region. Lines to show that, in equation \eqref{eq:estimate_R2_ve}, $4-2a-2\beta>3$ (related to the purple line) and $4+2(a-\beta)(2m-2n-1)>3$ (related to the blue line), for $n=0,\cdots,m-1$. The green line is the case $n=m-1$, the red one is the case $n=0$.}
	\label{Fig:lines}
\end{figure}
Therefore, for the choices of $a$ and $\beta$ in \eqref{eq:a} and $\eqref{eq:beta}$, we have that
\begin{equation*}
 \int_{R^2_\ve}\abs{\nabla^m \owep}^2 \dx=o(\abs{\Oe})
\end{equation*}
\item Estimate of $\int_{R^3_\ve}\abs{\nabla^m \owep}^2 \dx.$ 
In the set $R^3_\ve$, function $\psi_\ve(x_2)=1$ and $\overline{v}_\ve=x^m_2$, hence $\owep=x^m_2\varphi_\ve(x_1)$, which implies that
	\begin{equation*}
		|\nabla^m \owep|\leq C\sum_{n=0}^{m}|x^n_2| \Bigg|\frac{d^n \varphi_\ve}{d x^n_1}\Bigg|\leq C \sum_{n=0}^{m}\ve^{2n-2n\beta}.
	\end{equation*}
We notice that, since for hypothesis $\beta<1$, $2n(1-\beta)\geq 0$, for all $n=0,\cdots,m$, hence, using the fact that $|R^3_\ve|=2\ve^{2+2\beta}$, we get
	\begin{equation*}
	 \int_{R^3_\ve}\abs{\nabla^m \owep}^2 \dx\leq C |R^3_\ve| \sum_{n=0}^{m}\ve^{4n(1-\beta)}\leq C\ve^{2+2\beta},
	\end{equation*} 
where in the last inequality we have used the fact that $4n(1-\beta)+2+2\beta\geq 2+2\beta$, for all $n=0,\cdots,m$. For the choice made for $\beta$, i.e., $\beta>\frac{1}{2}$, we have that $2+2\beta>3$, which means that
	\begin{equation*}
	\int_{R^3_\ve}\abs{\nabla^m \owep}^2 \dx=o(|\Oe|).
	\end{equation*}
\end{enumerate}
\end{proof}

In the following we show that the maximal eigenvalue of $\tensor$ (as defined in \eqref{partialVeps})
is $\frac{1}{\kappa}$.
\begin{proposition} \label{ExplicitFormPT2}
Under the notational simplification that {$\Oe = \Oe(0,{{e_1}})$}
it follows that
\begin{equation} \label{eq:largest_ev}
\tensor E^{m+1}\cdot E^{m+1}=\frac{1}{\kappa}.
\end{equation}
\end{proposition}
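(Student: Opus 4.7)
The plan is to combine the variational identity \eqref{eq:CapVog06} with the approximation function $\owep$ provided by Lemma~\ref{le:he}. Since Proposition~\ref{BoundsPT} already yields the upper bound $\tensor E^{m+1}\cdot E^{m+1}\le\tfrac{1}{\kappa}$, only the matching lower bound is needed, and the key observation is that the direction $E^{m+1}=e_2\otimes\cdots\otimes e_2$ is exactly the one transverse to the thin strip $\Oe=\Oe(0,e_1)$, along which the coefficient jump is felt most strongly.

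First I would compute $\nabla^{m}x_2^{m}$: every mixed partial derivative involving an $x_1$ derivative vanishes, so the only non-zero component is $\partial^{m}x_2^{m}/\partial x_2^{m}=m!$; hence $\nabla^{m}x_2^{m}=m!\,E^{m+1}$. Consequently Lemma~\ref{le:he} can be rephrased as
\begin{equation*}
\int_\Omega \bigl|\nabla^m \owep - m!\,\chi_{\Oe} E^{m+1}\bigr|^2\dx = o(\abs{\Oe}).
\end{equation*}

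Next I would apply \eqref{eq:CapVog06} with the choice $E=E^{m+1}$ (so $\abs{E}^2=1$), obtaining
\begin{equation*}
(\kappa-1)\,\tensor E^{m+1}\cdot E^{m+1}=\frac{\kappa-1}{\kappa}+\lim_{\ve\to 0}\frac{1}{\abs{\Oe}}\min_{\widetilde W\in H^m_0(\Omega)}\!\int_\Omega \gep\Bigl|\nabla^m\widetilde W+\tfrac{\kappa-1}{\kappa}\chi_{\Oe}E^{m+1}\Bigr|^2\dx.
\end{equation*}
To estimate the minimum from above I would insert the test function $\widetilde W=\tfrac{1-\kappa}{\kappa\,m!}\,\owep$. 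A direct rearrangement gives
\begin{equation*}
\nabla^m\widetilde W+\tfrac{\kappa-1}{\kappa}\chi_{\Oe}E^{m+1}=\tfrac{1-\kappa}{\kappa\,m!}\bigl(\nabla^m\owep-m!\,\chi_{\Oe}E^{m+1}\bigr),
\end{equation*}
and combining this identity with $\gep\le 1$ and Lemma~\ref{le:he} yields
\begin{equation*}
\int_\Omega \gep\Bigl|\nabla^m\widetilde W+\tfrac{\kappa-1}{\kappa}\chi_{\Oe}E^{m+1}\Bigr|^2\dx \le \Bigl(\tfrac{1-\kappa}{\kappa\,m!}\Bigr)^2 o(\abs{\Oe}) = o(\abs{\Oe}).
\end{equation*}
Because the integrand inside the minimum is non-negative, the limit in the right-hand side of \eqref{eq:CapVog06} is pinched between $0$ and $0$ and therefore vanishes. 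Dividing the resulting identity $(\kappa-1)\,\tensor E^{m+1}\cdot E^{m+1}=\tfrac{\kappa-1}{\kappa}$ by $\kappa-1\ne 0$ yields \eqref{eq:largest_ev}.

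The main obstacle is entirely embedded in Lemma~\ref{le:he}, which has already been proved; the content here is the correct algebraic identification of the trial function $\widetilde W$ as a rescaled copy of $\owep$. Note that one has to be careful with the sign of $\kappa-1$: since $\kappa<1/2$, dividing by $\kappa-1$ flips inequalities, which is why a vanishing limit is exactly what is required to force equality in the upper bound from Proposition~\ref{BoundsPT} rather than to violate it. No further regularity work on $\uep$ or on $V^{\bm i}_\ve$ is needed, since the argument is purely a minimization-over-admissible-competitors estimate carried out after the weak formulation of $\Wep$ has been eliminated through the identity \eqref{eq:CapVog06}.
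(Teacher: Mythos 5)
Your argument is correct and is essentially the paper's own proof, recast with a different normalization: the paper takes $E=m!\,E^{m+1}$ in \eqref{eq:CapVog06} and tests the minimum with $\owep$ after the change of variables $\widetilde W=\tfrac{1-\kappa}{\kappa}\widehat W$, whereas you keep $E=E^{m+1}$ and absorb the $m!$ and $(1-\kappa)/\kappa$ factors directly into the trial function $\widetilde W=\tfrac{1-\kappa}{\kappa\,m!}\owep$. Both routes reduce the problem to Lemma~\ref{le:he} via the observation $\nabla^m x_2^m=m!\,E^{m+1}$ and the bound $\gep\le1$, so the content is the same.
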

\begin{proof}
We first observe that from the fact $\widehat{W} \in H_0^m(\Omega)$ it follows $\widetilde{W}:=- \frac{\kappa-1}{\kappa}\widehat{W} \in H_0^m(\Omega)$.
Therefore, from \eqref{eq:CapVog06} it follows that
\begin{equation*}
 \begin{aligned}
  \min_{\widetilde{W} \in H_0^m(\Omega)}\int_\Omega \gep  \abs{\nabla^m \widetilde{W} + \frac{\kappa-1}{\kappa} \chi_\Oe E}^2 \dx
  = & \min_{\widehat{W} \in H_0^m(\Omega)}\int_\Omega \gep  \abs{\frac{1-\kappa}{\kappa}\nabla^m \widehat{W} -
                                                        \frac{1-\kappa}{\kappa} \chi_\Oe E}^2 \dx \\
  =& \frac{(1-\kappa)^2}{\kappa^2}\min_{\widehat{W} \in H_0^m(\Omega)}\int_\Omega  \gep
         \abs{\nabla^m \widehat{W} -  \chi_\Oe E}^2 \dx.
 \end{aligned}
\end{equation*}
Now, in the previous equation, we choose $E=m! E^{m+1}$, hence
\begin{equation}\label{ineq}
 \begin{aligned}
  \min_{\widetilde{W} \in H_0^m(\Omega)} \int_\Omega \gep  \abs{\nabla^m \widetilde{W} + \frac{\kappa-1}{\kappa} \chi_\Oe m!E^{m+1}}^2 \dx
  &\leq
  \frac{(1-\kappa)^2}{\kappa^2} \int_\Omega  \gep  \abs{\nabla^m \owep -  \chi_\Oe m!E^{m+1}}^2 \dx\\
  & = \frac{(1-\kappa)^2}{\kappa^2} \int_\Omega  \gep  \abs{\nabla^m \owep -  \chi_\Oe \nabla^mx_2^m}^2 \dx.
 \end{aligned}
\end{equation}
Applying \autoref{le:he}, we get
\begin{equation}\label{testfunction}
\min_{\widetilde{W} \in H_0^m(\Omega)} \int_\Omega \gep  \abs{\nabla^m \widetilde{W}+ \frac{\kappa-1}{\kappa} \chi_\Oe m!E^{m+1}}^2 \dx=o( \abs{\Oe})
\end{equation}
as $\varepsilon\rightarrow 0$. Finally,  using \eqref{testfunction} into \eqref{eq:CapVog06} where we choose  $E=m!E^{m+1}$, we get
\begin{equation}
(m!)^2(\kappa -1)  \tensor E^{m+1} \cdot E^{m+1} = (m!)^2\frac{\kappa -1}{\kappa} +
         \lim_{\ve \to 0} \left(\frac{1}{{\abs{\Oe}}}
         \min_{\widetilde{W}\in H_0^m(\Omega)} \int_\Omega \gep
         \abs{\nabla^m \widetilde{W} + \frac{\kappa-1}{\kappa} \chi_\Oe m!E^{m+1}}^2\right) \dx,
        \end{equation}
which gives the assertion.
\end{proof}
\subsection*{Main Result on Spectral Decomposition of $\tensor$}
From the results of the previous section, we are now ready to prove the following spectral decomposition:
\begin{theorem} \label{th:main}
Under the geometrical simplification that $\Oe=\Oe(0,e_1)$, the tensor $\tensor$ has the following spectral decomposition
\begin{equation} \label{eq:main}
 \tensor=\sum_{n=1}^{m}E^n\otimes E^n+ \frac{1}{\kappa} E^{m+1} \otimes E^{m+1}.
\end{equation}
\end{theorem}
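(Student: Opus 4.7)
The plan is to exploit the self-adjointness and the sharp two-sided bounds that have already been established for $\tensor$ on the $(m+1)$-dimensional space $S^m(\mathbb{R}^2)$. By \autoref{PTproperty}, $\tensor$ has the full symmetries $\tensor_{\bm{i}\bm{j}}=\tensor_{\bm{j}\bm{i}}$, so it acts as a self-adjoint linear operator $\tensor : S^m(\mathbb{R}^2) \to S^m(\mathbb{R}^2)$. By \autoref{BoundsPT}, the Rayleigh quotient satisfies $1 \leq \tensor E\cdot E/|E|^2 \leq 1/\kappa$ for every $0 \neq E \in S^m(\mathbb{R}^2)$, which pins down the smallest and largest possible eigenvalues.

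Next I would invoke the variational characterization of eigenvalues. Since $\{E^1,\ldots,E^{m+1}\}$ is an orthonormal basis of $S^m(\mathbb{R}^2)$, it suffices to prove that each $E^h$ is actually an eigenvector of $\tensor$ with the eigenvalue equal to its Rayleigh quotient. By \autoref{ExplicitFormPT}, for $h=1,\ldots,m$ one has $\tensor E^h \cdot E^h = 1 = |E^h|^2$, so the lower bound of \autoref{BoundsPT} is attained; taking the first variation of the nonnegative quadratic form $Q(E):=\tensor E \cdot E - |E|^2$ at the minimizer $E^h$ yields $\tensor E^h = E^h$, so $1$ is an eigenvalue with eigenspace containing $\mathrm{span}(E^1,\ldots,E^m)$. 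The analogous argument applied to the upper bound, using \autoref{ExplicitFormPT2}, shows that $\tensor E^{m+1} = \frac{1}{\kappa} E^{m+1}$.

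Because $\dim S^m(\mathbb{R}^2) = m+1$ and I have produced $m+1$ mutually orthogonal unit eigenvectors, the classical spectral theorem for self-adjoint operators on a finite-dimensional inner-product space gives the complete decomposition
\[
\tensor = \sum_{n=1}^{m+1} \lambda_n\, E^n \otimes E^n = \sum_{n=1}^{m} E^n \otimes E^n + \frac{1}{\kappa}\, E^{m+1} \otimes E^{m+1},
\]
which is the desired formula. The only step that is not purely bookkeeping is the variational argument showing that a minimizer (resp.\ maximizer) of the Rayleigh quotient is an eigenvector whose eigenvalue equals the extremal value; this is standard but deserves a line of justification. I do not anticipate any further obstacle, since all analytic content, namely the symmetry of $\tensor$, the energy bounds, and the spectral values along the canonical basis, has already been established in Propositions \ref{PTproperty}, \ref{BoundsPT}, \ref{ExplicitFormPT}, and \ref{ExplicitFormPT2}.
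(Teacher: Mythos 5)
Your proof is correct and follows the same route as the paper: symmetry of $\tensor$ from \autoref{PTproperty}, the two-sided spectral bounds from \autoref{BoundsPT}, the extremal values of the quadratic form along the orthonormal basis $E^1,\ldots,E^{m+1}$ from Propositions~\ref{ExplicitFormPT} and~\ref{ExplicitFormPT2}, and then the finite-dimensional spectral theorem. You explicitly supply the variational step (attainment of the Rayleigh-quotient bound at $E^h$ forces $\tensor E^h = E^h$, resp.\ $\tensor E^{m+1}=\tfrac1\kappa E^{m+1}$) that the paper leaves implicit when it simply asserts these are eigenvectors, which is exactly the right justification.
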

\begin{proof}
By Proposition \ref{PTproperty} and Proposition \ref{BoundsPT}, the tensor $\tensor$  is  symmetric and positive definite. Hence, its eigenvalues are positive and real and by
\eqref{BoundsPT} they lie between $1$ and $\frac{1}{\kappa}$. Furthermore by Proposition \ref{ExplicitFormPT}, $1$ is
eigenvalue with multiplicity $m$ and corresponding eigenvectors $E^1,\cdots,E^m$. By Proposition \ref{ExplicitFormPT2},
$\frac{1}{\kappa}$ is an eigenvalue with corresponding eigenvector $E^{m+1}$.
Hence, for all $E\in S^m(\mathbb{R}^2)$ which, using the basis, we can represent as
 \begin{equation*}
  E=\sum_{n=1}^{m+1}(E\cdot E^n)E^n,
 \end{equation*}
we find, by applying the tensor $\tensor$, that
 \begin{equation}
 \begin{aligned}
 \label{eq:rechen}
 \tensor E = \sum_{n=1}^{m+1}(E\cdot E^n) \tensor E^n
  &= \sum_{n=1}^{m}(E\cdot E^n)E^n + \frac{1}{\kappa}(E\cdot E^{m+1}) E^{m+1} \\
  &= \sum_{n=1}^{m}(E^n\otimes E^n)E + \frac{1}{\kappa}(E^{m+1}\otimes E^{m+1}) E,
\end{aligned}  
\end{equation}
 which implies \eqref{eq:main}.
\end{proof}
\begin{remark} \label{re:main}
 In the general setting of $\Oe(y,\tau)$, \autoref{eq:main} reads as follows
 \begin{equation} \label{eq:main_general}
  \tensor = \tensor(\tau) = \sum_{n=1}^{m}E^n \otimes E^n + \frac{1}{\kappa} E^{m+1}\otimes E^{m+1},
 \end{equation}
 where
\begin{equation}\label{eq:tensor_II}
\begin{aligned}
E^1&=\underbrace{{\tau} \otimes\cdots\otimes {\tau}}_{m-elements} \\
E^h&=\frac{1}{\sqrt{\binom{m}{h-1}} }\sum_{\sigma_m}\underbrace{\tau\otimes\cdots\otimes \tau}_{(m-h+1)-elements}\otimes \underbrace{\tau^\perp\otimes \cdots \otimes \tau^\perp}_{(h-1)-elements},\qquad \textrm{for}\ \ h=2,\cdots,m, \\
E^{m+1}&=\underbrace{{\tau^\perp} \otimes\cdots\otimes {\tau^\perp}}_{m-elements}
\end{aligned}
\end{equation}
where $\tau^\perp$ is the unit normal vector to the line segment $\sigma_\ve(y,\tau)$.
\end{remark}

\begin{remark}
Note that the proof to derive the spectral decomposition of the tensor of order $2m$ is more involved than the one in \cite{CapVog06} since we have to deal with higher order differential equations with discontinuous coefficients.
\end{remark}


\section{Topological gradient} \label{sec:top_gradients}
We are now ready to derive the topological gradient of the functional $\functional{\cdot}{\cdot}$ as defined in \eqref{Jeps}.
\begin{theorem} \label{th:top_gradient}
Let $u, \uep$ be the solutions of \eqref{u_strong} and \eqref{ue_strong} respectively then,
for $\ve \to 0$, we have
\begin{equation}\label{eq:top_gradient}
\boxed{
\functional{\uep}{\vep}=\functional{u}{v} + 2\ve^3\alpha(\kappa-1)\mathbb{M}\nabla^mu(y)\cdot\nabla^mu(y)+o(\ve^3).}
\end{equation}
\end{theorem}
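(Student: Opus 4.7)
The plan is to reduce the energy difference to an \emph{exact} algebraic expression $\tfrac{\alpha(\kappa-1)}{2}\int_{\Oe}\nabla^m u\cdot \nabla^m \uep\,dx$ via test-function identities, and then extract the leading order by applying the polarization-tensor convergence established in Section~\ref{sec:pol}.

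Setting $\wep = \uep-u$ and using $\vep-v=(\kappa-1)\chi_{\Oe}$, a direct expansion of the quadratic functional gives
\begin{equation*}
\begin{aligned}
\functional{\uep}{\vep}-\functional{u}{v} &= \int_\Omega (u-f)\wep\dx + \tfrac{1}{2}\int_\Omega \wep^2\dx + \alpha\int_\Omega \vep\,\nabla^m u\cdot \nabla^m \wep\dx\\
&\quad + \tfrac{\alpha}{2}\int_\Omega \vep\,|\nabla^m \wep|^2\dx + \tfrac{\alpha(\kappa-1)}{2}\int_{\Oe} |\nabla^m u|^2\dx.
\end{aligned}
\end{equation*}
I would then invoke two consequences of the weak formulations tested against $\wep\in\hzn$: from the $u$-equation \eqref{u_strong}, $\int_\Omega(u-f)\wep\dx=-\alpha\int_\Omega v\,\nabla^m u\cdot\nabla^m\wep\dx$, which combines with the third term to yield $\alpha(\kappa-1)\int_{\Oe}\nabla^m u\cdot\nabla^m\wep\dx$; and subtracting the weak formulations of \eqref{ue_strong} and \eqref{u_strong} produces the energy identity $\int_\Omega\wep^2\dx+\alpha\int_\Omega\vep|\nabla^m\wep|^2\dx=\alpha(1-\kappa)\int_{\Oe}\nabla^m u\cdot\nabla^m\wep\dx$ (compare \eqref{EnEst_eq1}). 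Substituting both relations into the expansion and using $\nabla^m\uep=\nabla^m u+\nabla^m\wep$ pointwise on $\Oe$ collapses everything to
\begin{equation*}
\functional{\uep}{\vep}-\functional{u}{v}=\tfrac{\alpha(\kappa-1)}{2}\int_{\Oe}\nabla^m u\cdot \nabla^m \uep\dx.
\end{equation*}

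To pass to the limit I would first freeze $\nabla^m u$ at $y$: by \autoref{Reg}, $u\in C^{m,\gamma}(L_0)$, so $|\nabla^m u(x)-\nabla^m u(y)|\le C\ve^{\gamma}$ on $\Oe$; combined with $\|\nabla^m\uep\|_{L^2(\Oe)}=\mathcal{O}(\ve^{3/2})$ (from \autoref{EnEst} and \autoref{Reg}), this incurs an error $\mathcal{O}(\ve^{3+\gamma})=o(\ve^3)$. The remaining integral is $\nabla^m u(y)\cdot\int_{\Oe}\nabla^m\uep\dx$, and by Remark~\ref{rem:ueps_weakstar} applied with the continuous test function $\phi\equiv 1$ (admissible by density of $C^m(\overline{L_1})$ in $C^0(\overline{L_1})$),
\begin{equation*}
\frac{1}{|\Oe|}\int_{\Oe}\frac{\partial^m \uep}{\partial x_{\bm i}}\dx \;\longrightarrow\; \tensor_{\bm i\bm j}\frac{\partial^m u(y)}{\partial x_{\bm j}},\qquad \ve\to 0.
\end{equation*}
Contracting against $(\nabla^m u(y))_{\bm i}$ and invoking $|\Oe|=4\ve^3+\mathcal{O}(\ve^4)$ from \eqref{eq:area_incl} yields $\int_{\Oe}\nabla^m u(y)\cdot\nabla^m\uep\dx=4\ve^3\,\tensor\nabla^m u(y)\cdot\nabla^m u(y)+o(\ve^3)$, and substitution into the exact identity delivers \eqref{eq:top_gradient}.

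The delicate step of the argument is the cancellation in the middle paragraph: each of the remainders $\tfrac{1}{2}\int_\Omega\wep^2\dx$ and $\tfrac{\alpha}{2}\int_\Omega\vep|\nabla^m\wep|^2\dx$ is individually of order $\mathcal{O}(\ve^3)$ by the sharp energy estimate, i.e.\ of the same magnitude as the expected principal contribution, so a naive termwise Cauchy--Schwarz bound is too weak. It is the precise algebraic combination dictated by the two test-function identities which forces these quadratic remainders to merge with the mixed linear term $\alpha\int_\Omega\vep\,\nabla^m u\cdot\nabla^m\wep\dx$ and collapse to a single clean expression in $\nabla^m\uep$ alone. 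Once the exact identity is in hand, the passage to the limit is an immediate application of the polarization-tensor machinery of Section~\ref{sec:pol}.
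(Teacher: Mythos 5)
Your proposal is correct and follows essentially the same strategy as the paper: reduce the energy difference to the exact identity $\functional{\uep}{\vep}-\functional{u}{v}=\tfrac{\alpha(\kappa-1)}{2}\int_{\Oe}\nabla^m u\cdot\nabla^m\uep\,dx$ via the weak formulations, then pass to the limit using the polarization-tensor convergence of Section~\ref{sec:pol}. The only differences are presentational: you derive the exact identity by direct expansion of the quadratic functional plus two test-function substitutions rather than the paper's sequence of strategic test-function choices \eqref{eq51}--\eqref{eq54}, and in the limit step you freeze $\nabla^m u$ at $y$ using the $C^{m,\gamma}$ regularity and test against $\phi\equiv 1$ instead of the paper's decomposition $\Oe=\Oep\cup(\Oe\setminus\Oep)$ with the caps absorbed into $o(\ve^3)$.
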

\begin{proof}
Recalling the definition of the functional $\functional{\cdot}{\cdot}$, see \eqref{Jeps}, we first prove that
\begin{equation}\label{loc}
\functional{\uep}{\vep}-\functional{u}{v}=\frac{\alpha(\kappa-1)}{2}\int_{\Oe} \nabla^m\uep\cdot\nabla^m u \dx.
\end{equation}
It follows from the first order optimality condition of $\functional{\cdot}{\cdot}$ with respect to the first component,
for fixed $\ve$ and $v$, that
\begin{equation}\label{eq51}
 \int_\Omega (\uep-f)\phi \dx +\alpha \int_\Omega \vep \nabla^m \uep \cdot \nabla^m\phi \dx =0,\quad \text{ for all }\phi\in H^m_0(\Omega)
\end{equation}
and
\begin{equation}\label{eq52}
 \int_\Omega (u-f)\phi \dx +\alpha \int_\Omega v\nabla^mu\cdot\nabla^m\phi \dx =0,\quad \text{ for all }\phi\in H^m_0(\Omega).
\end{equation}
Choosing $\phi=u$ in \eqref{eq51}  and $\phi=\uep$  in \eqref{eq52} and then subtracting \eqref{eq52} from \eqref{eq51}, we get
\begin{equation}\label{eq:asymp_formula1}
 \int_\Omega (\uep-u)f \dx =\alpha(1-\kappa)\int_{\Oe}\nabla^m\uep\cdot\nabla^mu \dx.
\end{equation}
On the other hand, inserting $\phi=\uep$ into \eqref{eq51} and $\phi=u$ into \eqref{eq52}, we obtain, respectively
\begin{equation}\label{eq53}
 \int_\Omega (\uep-f)\uep \dx +\alpha \int_\Omega \vep\abs{\nabla^m\uep}^2 \dx = 0
\end{equation}
and
\begin{equation}\label{eq54}
 \int_\Omega (u-f)u \dx +\alpha \int_\Omega v \abs{\nabla^mu}^2 \dx = 0.
\end{equation}
Now, from \eqref{Jeps}, we find
\begin{equation*}
\functional{\uep}{\vep}-\functional{u}{v}=\frac{1}{2} \int_\Omega (\uep-f)^2 \dx +
\frac{\alpha}{2} \int_\Omega \vep\abs{\nabla^m\uep}^2 \dx -\frac{1}{2} \int_\Omega (u-f)^2 \dx
-\frac{\alpha}{2} \int_\Omega v \abs{\nabla^mu}^2 \dx,
\end{equation*}
hence, by \eqref{eq53} and \eqref{eq54} we have that
\begin{equation*}
\functional{\uep}{\vep}-\functional{u}{v}=-\frac{1}{2} \int_\Omega (\uep-u)f \dx,
\end{equation*}
and using \eqref{eq:asymp_formula1}, we get \eqref{loc}.
Now, we estimate the right-hand side of the equation \eqref{loc}, first observing that
\begin{equation}\label{eq:55}
\int_{\Oe}\nabla^m\uep\cdot\nabla^mu \dx =\int_{\Oep}\nabla^m\uep\cdot\nabla^mu \dx +\int_{\Oe\backslash \Oep}\nabla^m\uep\cdot\nabla^mu \dx.
\end{equation}
In the second integral in the righ-hand side of the previous formula, we first add and subtract $\nabla^m u$, and then we apply the Schwarz's inequality, the regularity estimates \eqref{ineq1} and the energy estimates \eqref{EnEst_Hm}, that is
\begin{equation}\label{eq:56}
 \begin{aligned}
  \Bigg|\int_{\Oe \backslash \Oep} \nabla^m\uep \cdot \nabla^mu \dx\Bigg|
  &\leq
    \norm{u}_{L^2(\Omega)}\norm{\uep-u}_{H^m(\Omega)}+\norm{u}^2_{L^2(\Omega)} \\
  &\leq C(\norm{\uep-u}_{H^m(\Omega)} \abs{\Oe \backslash \Oep}^{1/2} + \abs{\Oe\backslash \Oep} \\
  &= o(\ve^3).
 \end{aligned}
\end{equation}
Therefore, inserting \eqref{eq:56} in \eqref{eq:55} and then the resulting equation in \eqref{loc}, it follows
\begin{equation*}
\functional{\uep}{\vep}-\functional{u}{v}=\frac{\alpha(\kappa-1)}{2}\int_{\Oep}\nabla^m\uep\cdot\nabla^mu \dx +o(\ve^3).
\end{equation*}
Next, choosing a bounded set $L_1$ such that $\Omega_\ve\subset L_1\subset L_0$, we use the result in Remark \ref{rem:ueps_weakstar}, hence
\begin{equation*}
\frac{1}{\abs{\Oep}} \int_{\Oep} \nabla^m\uep\cdot\nabla^mu  \dx \to \mathbb{M}\nabla^m u(y)\cdot \nabla^m u(y) \text{ as } \ve \to 0.
\end{equation*}
Recalling that $|\Oep|=4\ve^3$, see \eqref{eq:area_incl}, we finally derive
\begin{equation*}
 \functional{\uep}{\vep}-\functional{u}{v})=2\ve^3\alpha(\kappa-1)\mathbb{M}\nabla^m u(y)\cdot \nabla^m u(y)+o(\ve^3),
\end{equation*}
which concludes the proof.
\end{proof}

\section{Numerical Simulations}\label{sec:numerics}
We consider the problem of Quantitative Photoacoustic Tomography(qPAT) with piecewise constant parameters
$\mu$ and $D$ (absorption and diffusion coefficients, respectively), and a constant Gr\"uneisen parameter $\Gamma$ (see \eqref{eq:def_E} and \eqref{qpatproblem})
as outlined in the \autoref{sec:intro}. Parameters $\mu$ and $D$ can be detected from the set of
discontinuities of derivatives up to the $2$-order of the qPAT measurement data $\mathcal{H}$, which
is proportional to $\mathcal{E}$ under the assumption that $\Gamma$ is constant (see \eqref{eq:def_E}).
\autoref{fig:qpat} shows a typical example of qPAT data derived from piecewise constant material parameters.
\begin{figure}[!ht]
 \begin{center}
\includegraphics[scale=0.5]{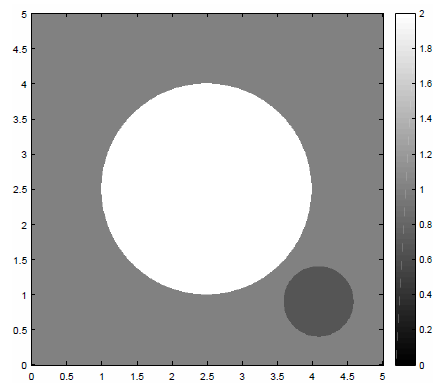}
\hspace{1ex}
\includegraphics[scale=0.5]{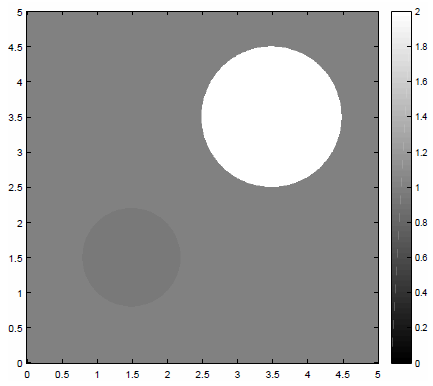}
\hspace{1ex}
\includegraphics[scale=0.5]{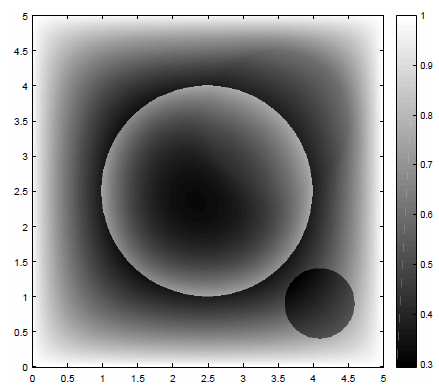}
\end{center}
   \caption{\footnotesize From the left: the piecewise constant absorption coefficient $\mu$, the piecewise constant
   diffusion coefficient $D$, the simulated qPAT data $f=\mathcal{E}$. The parameters are from \autoref{qpatproblem}.
   The test data is analogous as in \cite{BerMusNaeSch16}.}
\label{fig:qpat}
\end{figure}
In this section, we extend the topological based algorithms for edges detection in image data (see \cite[Algorithm 1 and Algorithm 2]{BerGraMusSch14}), by using elliptic differential equations of order $2m$, see \eqref{ue_strong}, with
$f=\mathcal{E}$, and the topological gradient provided in \eqref{eq:top_gradient}.  Note that in \cite[Algorithm 1 and Algorithm 2]{BerGraMusSch14} the
discontinuities of $f$ have been detected by using the discontinuity of the gradient of the solution of a second order elliptic equation along line segments.
We emphasize that according to \cite{NaeSch14} if the diffusion coefficient $D$ is jumping across an
interface but the absorption $\mu$ is constant, one observes a jump in the derivative of the gradient of $\mathcal{E}$.

The goal of this section is to show that using the topological gradient derived in \eqref{eq:top_gradient}, we are able to detect both the absorption and diffusion coefficients better than in the results obtained in \cite{BerGraMusSch14}, and the ones in \cite{BerMusNaeSch16} where a variational method based on an Ambrosio-Tortorelli approximation of a Mumford-Shah-like functional is used. 
At this aim, we use the asymptotic expansion provided in the previous section, specialized only to the case $m=2,3$, comparing the results with the case $m=1$, which was studied in \cite{BerGraMusSch14}, and with the numerical outcomes of \cite{NaeSch14}.   

Using the topological asymptotic expansion \eqref{eq:top_gradient} with
$v = \kappa \chi_K + 1 \chi_{\Omega \backslash K}$,
$\vep = \kappa \chi_{K \cup \overline{\Oe}(y,\tau)} + 1 \chi_{\Omega \backslash (K \cup \overline{\Oe}(y,\tau))}$
(see \eqref{eq:v}) where the according set $K$ is as introduced in Assumption \ref{ass:assumption_domains}, we get
\begin{equation*}
 \functional{\uep}{\vep} - \functional{u}{v} \sim 2\ve^3\alpha(\kappa-1)\tensor\nabla^mu(y)\cdot\nabla^mu(y).
\end{equation*}
To develop a stable algorithm, we follow the approach proposed in \cite{BerGraMusSch14} for the case $m=1$. We recall here the main idea: For every $v\in L^2(\Omega)$, we define 
	\begin{equation*}
		m_{\varepsilon}(v):=\inf\Big\{|S|: S\subset\mathbb{R}^2\times \mathbb{S}^1,\ v=v_{K}\ \ \textrm{with}\ K= \bigcup\limits_{(y,\tau)\in S}\Omega_{\varepsilon}(y,\tau) \Big\},
	\end{equation*}
where we set $m_{\varepsilon}(v):=+\infty$, if $v\neq v_K$ for every finite subset $S\subset \mathbb{R}^2\times \mathbb{S}^1$ with $K=\bigcup\limits_{(y,\tau)\in S}\Omega_\ve(y,\tau)$. The idea behind the algorithm is to introduce a slight modification of the functional $\mathcal{J}$ defined in \eqref{Jeps} in order to take into account a constraint on the perimeter of $K$, i.e.
	\begin{equation*}
		\mathcal{J}_\ve(u,v):=\frac{1}{2}\int_{\Omega}(u-f)^2\, dx +\frac{\alpha}{2}\int_{\Omega}v |\nabla^m u|^2\, dx +2\beta\ve m_{\ve}(v),
	\end{equation*}
where $\beta$ is a positive parameter, for all $u\in H^m_0(\Omega)$ and $v\in L^{\infty}(\Omega)$.	
It is shown in \cite{BerGraMusSch14} that for general $\Omega_\ve(y,\tau)\cap K=\emptyset$, it yields 
	\begin{equation*}
		\mathcal{J}_\ve(u,v_\ve)-\mathcal{J}_\ve(\hat{u},v)=\mathcal{J}(u,v_\ve)-\mathcal{J}(\hat{u},v)+2\beta\ve
	\end{equation*} 
where $\mathcal{J}$ is exactly the functional defined in \eqref{Jeps}, for all $u, \hat{u}\in H^m_0(\Omega)$. Therefore, by the equation \eqref{eq:top_gradient}, we have
	\begin{equation}\label{eq:compl_asympt_expans}
		 \mathcal{J}_\ve(u_\ve,v_\ve)-\mathcal{J}_\ve(u,v)= \functional{\uep}{\vep} - \functional{u}{v} +2\beta\ve \sim 2\ve^3\alpha(\kappa-1)\tensor\nabla^mu(y)\cdot\nabla^mu(y) +2\beta\ve.
	\end{equation}
We observe that, from Proposition \ref{BoundsPT}, we have that 
	\begin{equation}\label{eq:tens_der2_der2}
		\mathbb{M}(\tau)\nabla^m u(y)\cdot \nabla^m u(y)\leq \frac{|\nabla^m u(y)|^2}{\kappa}.
	\end{equation}
Therefore, substituting	$\mathbb{M}(\tau)\nabla^m u(y)\cdot \nabla^m u(y)\sim \frac{|\nabla^m u(y)|^2}{\kappa}$ in \eqref{eq:compl_asympt_expans}, which implies that the direction of the line segment has to be chosen parallel to the eigenvector associated to the eigenvalue of $\nabla^m u(y)$ with maximum absolute value, we find
 	\begin{equation*}
 	\mathcal{J}_\ve(u_\ve,v_\ve)-\mathcal{J}_\ve(u,v)\sim -2\ve^3\alpha\frac{(1-\kappa)}{\kappa}|\nabla^m u(y)|^2 +2\beta\ve,
 	\end{equation*}
hence we expect a decrease of the functional $\mathcal{J}_\ve$ in case
	\begin{equation}\label{eq:stab_crit}
		|\nabla^m u(y)|^2\geq \frac{\beta \kappa}{\alpha \ve^2 (1-\kappa)}.
	\end{equation}
In the particular case $m=2$, we can be more precise, finding the maximum value of $\mathbb{M}(\tau)\nabla^m u(y)\cdot \nabla^m u(y)$ and the direction of $\tau$ where it occurs. In fact, we are able to provide the explicit expression of the tensor $\tensor$, which is optimal with respect to $\nabla^2u(y)$, i.e., which maximizes the form $\mathbb{M}\nabla^2u(y)\cdot \nabla^2 u(y)$.
\begin{lemma}[$m=2$] \label{le:optimal_tensor}
	Let $\lambda_1:=\lambda_1(y)$ and $\lambda_2:=\lambda_2(y)$ denote the eigenvalues of $\nabla^2u(y)$ and the according eigenvectors 
	are denoted by $\hat{\tau}:=\hat{\tau}(y)$ and $\hat{\tau}^\bot:=\hat{\tau}^\bot(y)$, respectively.
	Moreover, we assume that $\abs{\lambda_1} \leq \abs{\lambda_2}$. 
	Then, for $\tensor$ as defined in \eqref{eq:main_general}, with $m=2$, we have 
	\begin{equation*}
	\tensor(\hat{\tau}) \nabla^2u(y)\cdot\nabla^2u(y) =\lambda_1^2 + \frac{\lambda_2^2}{\kappa} = \max_{{\tau} \in \mathbb{S}^1} \tensor({\tau}) \nabla^2u(y)\cdot\nabla^2u(y).
	\end{equation*}
\end{lemma}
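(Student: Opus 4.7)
The plan is to exploit the spectral decomposition of $\mathbb{M}(\tau)$ given in Remark \ref{re:main}, which for $m=2$ reads
\begin{equation*}
\mathbb{M}(\tau) = E^1\otimes E^1 + E^2\otimes E^2 + \tfrac{1}{\kappa} E^3\otimes E^3,
\end{equation*}
with $E^1=\tau\otimes\tau$, $E^2=\tfrac{1}{\sqrt{2}}(\tau\otimes\tau^\perp+\tau^\perp\otimes\tau)$ and $E^3=\tau^\perp\otimes\tau^\perp$. First I would observe that $\{E^1,E^2,E^3\}$ is an orthonormal basis of $S^2(\mathbb{R}^2)$, so for every symmetric $2$-tensor $A$ one has the Parseval identity $\abs{A}^2=(A\cdot E^1)^2+(A\cdot E^2)^2+(A\cdot E^3)^2$, an identity invariant under the choice of orthonormal frame $\{\tau,\tau^\perp\}$.

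Applying the decomposition to $A=\nabla^2 u(y)$ gives
\begin{equation*}
\mathbb{M}(\tau)\nabla^2u(y)\cdot\nabla^2u(y) = \abs{\nabla^2u(y)}^2 + \tfrac{1-\kappa}{\kappa}\,\bigl(\tau^\perp\cdot\nabla^2u(y)\,\tau^\perp\bigr)^2.
\end{equation*}
Since $\abs{\nabla^2u(y)}^2=\lambda_1^2+\lambda_2^2$ is independent of $\tau$, and since $\tfrac{1-\kappa}{\kappa}>0$ because $\kappa<1/2$, maximizing the left-hand side reduces to maximizing $(\tau^\perp\cdot\nabla^2u(y)\,\tau^\perp)^2$ over $\tau\in\mathbb{S}^1$.

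Next I would write $\tau=\cos\theta\,\hat{\tau}+\sin\theta\,\hat{\tau}^\perp$, so that $\tau^\perp=-\sin\theta\,\hat{\tau}+\cos\theta\,\hat{\tau}^\perp$, and use that $\nabla^2u(y)=\lambda_1\,\hat{\tau}\otimes\hat{\tau}+\lambda_2\,\hat{\tau}^\perp\otimes\hat{\tau}^\perp$ in its eigenbasis. A direct computation yields
\begin{equation*}
\tau^\perp\cdot\nabla^2u(y)\,\tau^\perp = \lambda_1\sin^2\theta+\lambda_2\cos^2\theta.
\end{equation*}
Setting $s=\sin^2\theta\in[0,1]$, the function $s\mapsto(\lambda_2+(\lambda_1-\lambda_2)s)^2$ is a convex quadratic, so on $[0,1]$ its maximum is attained at an endpoint: the value is $\lambda_2^2$ at $s=0$ and $\lambda_1^2$ at $s=1$. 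The hypothesis $\abs{\lambda_1}\le\abs{\lambda_2}$ singles out $s=0$, i.e.\ $\tau=\hat{\tau}$, giving maximum value $\lambda_2^2$.

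Inserting this optimum back into the displayed identity yields
\begin{equation*}
\max_{\tau\in\mathbb{S}^1}\mathbb{M}(\tau)\nabla^2u(y)\cdot\nabla^2u(y) = \lambda_1^2+\lambda_2^2+\tfrac{1-\kappa}{\kappa}\lambda_2^2 = \lambda_1^2+\tfrac{\lambda_2^2}{\kappa},
\end{equation*}
and the maximum is realized for $\tau=\hat{\tau}$, which is the claim. The only delicate point is bookkeeping: one must respect the $1/\sqrt{2}$ normalization in $E^2$ so that Parseval really collapses to $\lambda_1^2+\lambda_2^2$, and one must invoke $\abs{\lambda_1}\le\abs{\lambda_2}$ (not simply $\lambda_1\le\lambda_2$) to pick the correct endpoint of $[0,1]$; no harder step is involved.
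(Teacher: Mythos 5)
Your proof is correct, and it takes a genuinely cleaner route than the paper's. The paper diagonalizes $\nabla^2u(y)=U\Lambda U^T$, substitutes into the formula for $\tensor(\tau)$, expands everything in the components $\chi=\tau^T U$, and then reorganizes the resulting quartic in $\chi_1,\chi_2$ by hand (introducing $\rho,\bar\rho$) until the maximizer becomes visible. You instead exploit the structure of the spectral decomposition directly: since $\{E^1,E^2,E^3\}$ is an orthonormal basis of $S^2(\R^2)$, you observe $\tensor(\tau)=\mathrm{Id}+\bigl(\tfrac{1}{\kappa}-1\bigr)E^3(\tau)\otimes E^3(\tau)$, so
\begin{equation*}
\tensor(\tau)A\cdot A=\abs{A}^2+\tfrac{1-\kappa}{\kappa}\,(\tau^\perp\cdot A\,\tau^\perp)^2,
\end{equation*}
and the entire $\tau$-dependence collapses to the single scalar $\tau^\perp\cdot A\,\tau^\perp$. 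Maximizing a convex quadratic in $s=\sin^2\theta$ over $[0,1]$ then finishes the job. This buys you two things: the algebra is an order of magnitude shorter, and the role of the hypothesis $\abs{\lambda_1}\le\abs{\lambda_2}$ is completely transparent (it just selects the larger endpoint value $\lambda_2^2$ over $\lambda_1^2$). The paper's computation arrives at the same maximizer but the sign argument at the end is less readable. One small presentational remark: you could state explicitly that $E^1\otimes E^1+E^2\otimes E^2+E^3\otimes E^3=\mathrm{Id}_{S^2(\R^2)}$ is what you are using, rather than only invoking Parseval, since that identity is the crux; but the content is the same.
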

\begin{proof} 
	First of all, we note that, due to the symmetry of $\nabla^2u(y)$, we can consider an orthogonal decomposition of the matrix $\nabla^2u(y)$, i.e., 
	\begin{equation}\label{eq:decomp_matr}
	\nabla^2 u (y) = U \Lambda U^T 
	\end{equation}
	where $U$ is an orthogonal matrix, and the matrices are given by
	\begin{equation}\label{eq:lambda_orthogonal_mat}
	\Lambda=\begin{bmatrix}
	\lambda_1 & 0\\
	0 & \lambda_2
	\end{bmatrix},\qquad
	U = \begin{bmatrix} 
	\hat{\tau} & \hat{\tau}^\bot 
	\end{bmatrix}
	=\begin{bmatrix}
	\hat{\tau}_1  & -\hat{\tau}_2 \\
	\hat{\tau}_2  &  \hat{\tau}_1
	\end{bmatrix}
	\end{equation}
	Then, from \eqref{eq:main_general} (cf. \eqref{eq:rechen}), \eqref{eq:decomp_matr} and \eqref{eq:lambda_orthogonal_mat}, we get
	\begin{equation} \label{eq:first}
	\begin{aligned}
	\tensor(\tau) \nabla^2u(y)\cdot\nabla^2u(y)  &= \left[ (U\ \Lambda\ U^T) \cdot (\tau \otimes \tau) \right]^2 \\
	&+ \frac{1}{2}   \left[ (U\ \Lambda\ U^T) \cdot (\tau \otimes \tau^\bot +\tau^\bot \otimes \tau) \right]^2 
	+ 
	\frac{1}{\kappa}   \left[ (U\ \Lambda\ U^T) \cdot (\tau^\bot \otimes \tau^\bot) \right]^2  \\
	&=  
	\left[ (\tau^TU)\ \Lambda\ (\tau^TU)^T \right]^2 + \frac{1}{\kappa}   \left[((\tau^\bot)^TU)\ \Lambda \ ((\tau^\bot)^T U )^T \right]^2 \\
	&+ 
	\frac{1}{2}   \left[ (\tau^TU)\ \Lambda\ ((\tau^\bot)^TU)^T + 
	((\tau^\bot)^TU)\ \Lambda\ (\tau^T U )^T \right]^2\\
	\end{aligned}
	\end{equation}
	Now, we denote by $\chi = \tau^TU$ and $\nu = (\tau^\bot)^T U$, then we have $\nu=\chi^\bot$ because $U$ is orthogonal, in fact  
	\begin{equation*}
	\chi \cdot \nu = \nu^T \chi = U^T\tau^\bot \tau^T U =0.
	\end{equation*}
	Because $\norm{\nu}=\norm{\chi}=1$, it follows from \eqref{eq:first} and the explicit expression of $\Lambda$, see \eqref{eq:lambda_orthogonal_mat}, that
	\begin{equation} \label{eq:first2}
	\begin{aligned}
	\tensor(\tau) \nabla^2u(y)\cdot\nabla^2u(y)
	= & 
	\left[ \chi\ \Lambda\ \chi^T \right]^2+ 
	\frac{1}{2}   \left[ \chi\ \Lambda\ (\chi^{\bot})^T + 
	\chi^{\bot}\ \Lambda\   \chi^T \right]^2 + 
	\frac{1}{\kappa}   \left[ \chi^{\bot}\ \Lambda\ (\chi^{\bot})^T \right]^2\\
	&= (\lambda_1\chi_1^2+\lambda_2\chi_2^2)^2 + \frac{1}{\kappa} (\lambda_1\chi_2^2 +\lambda_2\chi_1^2)^2 
	+ 2(\lambda_2-\lambda_1)^2\chi_1^2\chi_2^2\\ 
	&= \lambda_1^2 \chi_1^4+\lambda_2^2 \chi_2^4 + 2\lambda_1\lambda_2\chi_1^2 \chi_2^2 \\
	& \quad + \frac{1}{\kappa}(\lambda_1^2\chi_2^4+\lambda_2^2\chi_1^4 + 2\lambda_1\lambda_2\chi_1^2 \chi_2^2)
	+ 2(\lambda_2-\lambda_1)^2\chi_1^2 \chi_2^2 =:
	\mathcal{T}.
	\end{aligned}
	\end{equation}
	Defining $\rho := \lambda_1^2 + \frac{1}{\kappa} \lambda_2^2$ and $\bar \rho := \lambda_2^2 + \frac{1}{\kappa} \lambda_1^2$ we see that the last term on the right hand side of 
	\eqref{eq:first2} equals
	\begin{equation*}
	\begin{aligned}
	\mathcal{T} = \left(\rho (\chi_1^4+2\chi_1^2 \chi_2^2 +\chi_2^4) - 2 \chi_1^2 \chi_2^2 \rho - \chi_2^4 \rho \right) + \chi_2^4 \bar \rho + 
	2\chi_1^2 \chi_2^2\lambda_1\lambda_2 \left(1+ \frac{1}{\kappa} \right) + 2(\lambda_2-\lambda_1)^2\chi_1^2 \chi_2^2.
	\end{aligned}
	\end{equation*}
	Now, we take into account that $\chi$ is a vector of norm $1$ and thus
	\begin{equation*}
	\begin{aligned}
	\chi_1^4+2\chi_1^2 \chi_2^2 +\chi_2^4 &=1,\\ 
	(-\rho  + \bar \rho) &= (\lambda_2^2-\lambda_1^2) \left(1-\frac{1}{\kappa} \right),\\
	\lambda_1\lambda_2 \left(1+ \frac{1}{\kappa} \right) + (\lambda_2-\lambda_1)^2 -\rho &= 
	(\lambda_1\lambda_2 - \lambda_2^2)\left(\frac{1}{\kappa}-1 \right),
	\end{aligned}
	\end{equation*}
	and thus 
	\begin{equation*}
	\begin{aligned}
	\mathcal{T} = \rho  + \chi_2^4 (\lambda_2^2-\lambda_1^2) \left(1-\frac{1}{\kappa} \right)-
	2\chi_1^2 \chi_2^2 (\lambda_1\lambda_2 - \lambda_2^2)\left(1-\frac{1}{\kappa}\right).
	\end{aligned}
	\end{equation*}
	By the assumptions on the eigenvalues, i.e. $\abs{\lambda_2} > \abs{\lambda_1}$, the sum of the last two terms is always negative, hence the maximum value of $\mathcal{T}$ is given by $\rho$. Then, we note that this maximum occurs when $\chi_2=0$, that is, in terms of $\tau$ this means that 
	\begin{equation*}
	0=\chi_2=(\tau^TU )_2 = -\tau_1\hat{\tau}_2+\tau_2\hat{\tau}_1 
	\end{equation*}
	which, equivalently, means that ${\tau} \bot \hat{\tau}^\bot$ or in other words ${\tau} =\pm \hat{\tau}$.
\end{proof}
\begin{remark}\label{rem:eigen} Since $\kappa-1 \leq 0$, as a consequence of \autoref{le:optimal_tensor}, we get that 
	\begin{equation*} \label{eq:optimality}
	\tensor({\tau}) \nabla^2u(y)\cdot\nabla^2u(y) \leq 
	\tensor(\hat{\tau}) \nabla^2u(y)\cdot\nabla^2u(y) = \lambda_1^2(y) + \frac{\lambda_2^2(y)}{\kappa}, \quad \forall {\tau} \in \mathbb{S}^1.
	\end{equation*}
Therefore, by considering the following approximation
	\begin{equation*}
	 		\tensor({\tau}) \nabla^2u(y)\cdot\nabla^2u(y)\sim \lambda_1^2(y) + \frac{\lambda_2^2(y)}{\kappa},
	\end{equation*}
equation \eqref{eq:compl_asympt_expans} becomes
	\begin{equation}\label{eq:asymp_expan_lambda}
		\mathcal{J}_\ve(u_\ve,v_\ve)-\mathcal{J}_\ve(u,v)\sim -2\ve^3\alpha(1-\kappa)\left(\lambda_1^2(y) + \frac{\lambda_2^2(y)}{\kappa}\right) +2\beta\ve.
	\end{equation}	 	 	
Therefore, we expect a dicrease of the functional $\mathcal{J}_\ve$ if
	\begin{equation}\label{eq:stop_crit_lambda}
		\left(\lambda_1^2(y) + \frac{\lambda_2^2(y)}{\kappa}\right)\geq \frac{\beta}{\alpha \ve^2 (1-\kappa)}.
	\end{equation}	
\end{remark}
\begin{remark}
	A similar result of Lemma \ref{le:optimal_tensor} for the case $m\geq 3$ is more involved to get, due to the fact that a decomposition of $\nabla^m u(y)$ in terms of its eigenvalues is not known a priori, see \cite[Section 8.2]{ComGolLimMou08}. In fact, in the real field, the number of eigenvalues of a $m$-order tensor could be different from the dimensional space (in our case 2). 
\end{remark}
Using the above facts, we implement, in the case $m=2$ and $m=3$, a two step algorithm for detecting line segments consisting of first detection of edge position, and next, determining its direction according to the rules:
\begin{enumerate}
\item\label{item:1} We detect only significant edges by selecting the point $y\in L$ satisfying the stabilizing criterion \eqref{eq:stab_crit};
\item\label{item:2} In this point, we create a line segment in the same direction of the eigenvector associated to the eigenvalue of $\nabla^m u(y)$ of maximum absolute value.
\end{enumerate}
Before presenting the numerical results, we make some other remarks on this algorithm.

\begin{remark}
	In the case $m=3$, to identify the eigenvalue of greatest absolute value of $\nabla^3 u(y)$ and, in particular, its corresponding eigenvector, we utilize the results in \cite[Theorem 7.3]{QiChenChen18} regarding the L-eigenvectors of a third order tensor. We recall here the main step: given $\nabla^3 u(y)$ we define the \textit{kernel tensor} $U_{ij}=\sum_{k,h=1}^{2}(\nabla^3 u(y))_{ikh} (\nabla^3 u(y))_{khj}$. Then, the eigenvector associated to the greatest eigenvalue of $\nabla^3 u(y)$, is equal to the eigenvector associated to the greatest eigenvalue of maximum module of the kernel matrix $U$, see \cite{QiChenChen18} for more details.
\end{remark}

\begin{remark}
	For the case $m=2$, we will also show the results given by steps \ref{item:1} and \ref{item:2} where we replace the stabilizing criterion with \eqref{eq:stop_crit_lambda} and choosing as the direction of the line segment the one given in Lemma \ref{le:optimal_tensor} (see Algorithm \ref{alg:m2} below).	
\end{remark}
We are now in position to develop and show three algorithms, \autoref{alg:noupdate}, \autoref{alg:m2} and \autoref{alg:update} below, generalizing those contained in \cite{BerGraMusSch14}.
These algorithms will be applied to the qPAT test data from \cite{NaeSch14}, represented in \autoref{fig:qpat}. Since we need to solve numerically higher order elliptic equations with finite element methods, the qPAT image is down-sampled to $(130\times 130)$ in order to save computational time. The numerical results can be applied
to more general problems which aim to detect discontinuities in an image $f$ through the discontinuity of higher order derivatives of a smoothed version of $f$.

\subsection*{\autoref{alg:noupdate}} Our first algorithm computes a smoothed version of the input image $f$, where the smoothed image is the solution of a $2m$-order elliptic equations with $v=1$, see \eqref{u_strong}. By only using this regularization function, we identify a sequence of thin stripes $K^{(j)}$,
where $K^{(j+1)}$ is formed by including to $K^{(j)}$ a thin stripe $\Oe(y^{(j)},\tau^{(j)})$
in the position $y^{(j)}$, for which $\abs{\nabla^m u(y)}^2$ is maximal, and along the direction $\tau^{(j)}$ until 
$\abs{\nabla^m u(y)}^2 \geq \frac{\beta \kappa}{\alpha \ve^2 (1-\kappa)}$. We note that the direction $\tau^{(j)}$ coincides with that one of the eigenvector associated to the greatest absolute eigenvalue, when $m=2$ or $m=3$, and is chosen orthogonal to the gradient when $m=1$ (see \cite{BerGraMusSch14} for this last case). See the scheme in Algorithm \ref{alg:noupdate}.

\SetAlCapSkip{2ex}
\begin{algorithm}[ht]
\DontPrintSemicolon
\SetAlgoLined
  \KwData{input data $f$ (for instance qPAT data $\mathcal{E}$), $m=1$ or $m=2$ or $m=3$, parameters $0 < \kappa \ll 1$, $\alpha$, $\beta > 0$,
          $\ve$, $\delta_0 > 0$, $\varrho_0>0$;}
  \KwResult{the set of line segments $S$ and the set of thin stripes $K$;}
  \BlankLine
  \SetKwInput{Initialize}{Initialization}
  \Initialize{set $S = \emptyset$, $K = \emptyset$  and $L = \Omega\setminus(\partial\Omega\oplus \mathcal{B}_{\delta_0}(0))$}
  \BlankLine
  compute the solution $u$ of
  \begin{equation*}
   \begin{cases}
  u+\alpha(-1)^m\divz(\nabla^m u)=f & \text{ in } \Omega,\\[0.1cm]
  u = \normalderivative{u}=\cdots=\normalderivativem{u}{m-1}= 0& \text{ on } \partial \Omega,
  \end{cases}
  \end{equation*}
  with a finite element method.

  \Repeat{$\max_{y \in L} \abs{\nabla^m u(y)}^2 < \frac{\beta \kappa}{\alpha \ve^2(1-\kappa)}$}{
    find $y \in L$ such that $\abs{\nabla^m u(y)}^2$ is maximal;

    compute the line segment  $\Sigma_\ve(y,\tau)$ and the thin stripe $\Oe(y,\tau)$;

    set $S \leftarrow S \cup \Sigma_\ve(y,\tau)$ and $K \leftarrow K \cup \Oe(y,\tau)$;

    set $L \leftarrow L \setminus (\Oe(y,\tau) \oplus \mathcal{B}_{\varrho_0}(0)) $;
  }
  \caption{Implementation without updates of $v$.}\label{alg:noupdate}
\end{algorithm}

\begin{remark}
	When $m=2$, we use Hsieh-Clough-Tocher $C^1$ finite elements to solve the fourth-order differential equations. Let call $\mathcal{T}^{\Delta}_h$ the sub mesh of $\mathcal{T}$ where all the triangles are split in 3 sub triangles at their barycenter, then
		\begin{equation*}
			HCT_h=\{\eta\in C^1(\Omega):\ \ \forall T\in\mathcal{T}^{\Delta}_h\,\,\ \eta_{|_{T}}\in P^3 \}
		\end{equation*}
	where $P^3$ is the set of polynomials of $\mathbb{R}^2$ of degrees less or equal to 3. The degree of freedom are the value and derivatives at vertices and normal derivative at middle edge point of initial meshes, see \cite{Poz14}.

	In the case $m=3$, we use a splitting method in order to solve the corresponding $6$-th order equation with a finite element method. In particular, we solve the system given by $v:=\Delta u$ and $u-\alpha \Delta^2 v=f$. From the viewpoint of the implementation, this means that we need only to modify properly the code got for the fourth-order equation. Certainly, the numerical results for this case can be improved using more sophisticated methods which are able to solve directly the sixth-order equation with the properly boundary conditions. In fact, in our implementation only two of the three prescribed boundary conditions are satisfied, namely $u=0$ and $\frac{\partial u}{\partial n}=0$. 
\end{remark}
\subsection*{\autoref{alg:m2}} In this algorithm we focus the attention only to the case $m=2$, following the same ideas contained in Algorithm \ref{alg:noupdate} but we use, as stopping criterion and as identifier of the points where to insert a line segment, the relations in Remark \ref{rem:eigen}. See the scheme below of Algorithm \ref{alg:m2} for all details. 
\SetAlCapSkip{2ex}
\begin{algorithm}[ht]
	\DontPrintSemicolon
	\SetAlgoLined
	\KwData{input data $f$ (for instance qPAT data $\mathcal{E}$), parameters $0 < \kappa \ll 1$, $\alpha$, $\beta > 0$,
		$\ve$, $\delta_0 > 0$, $\varrho_0>0$;}
	\KwResult{the set of line segments $S$ and the set of thin stripes $K$;}
	\BlankLine
	\SetKwInput{Initialize}{Initialization}
	\Initialize{set $S = \emptyset$, $K = \emptyset$  and $L = \Omega\setminus(\partial\Omega\oplus \mathcal{B}_{\delta_0}(0))$}
	\BlankLine
	compute the solution $u$ of
	\begin{equation*}
	\begin{cases}
	u+\alpha(\nabla\cdot)^2(\nabla^2 u)=f & \text{ in } \Omega,\\[0.1cm]
	u = \normalderivative{u}= 0& \text{ on } \partial \Omega,
	\end{cases}
	\end{equation*}
	with a finite element method.
	
	\Repeat{$\max_{y \in L} \left(\lambda_1^2(y)+\frac{\lambda_2^2(y)}{k}\right)< \frac{\beta}{\alpha \ve^2(1-\kappa)}$}{
		find $y \in L$ such that $\lambda_1^2(y)+\frac{\lambda_2^2(y)}{k}$ is maximal;
		
		compute the line segment  $\Sigma_\ve(y,\tau)$ and the thin stripe $\Oe(y,\tau)$;
		
		set $S \leftarrow S \cup \Sigma_\ve(y,\tau)$ and $K \leftarrow K \cup \Oe(y,\tau)$;
		
		set $L \leftarrow L \setminus (\Oe(y,\tau) \oplus \mathcal{B}_{\varrho_0}(0)) $;
	}
	\caption{Implementation without updates of $v$ and using eigenvalues of the matrix of second order derivatives.}\label{alg:m2}
\end{algorithm}

\subsection*{\autoref{alg:update}}
We combine updates of the piecewise constant $v$ with updates of the function $u$.
We start with $v\equiv 1$. After adding a fixed number $s$ of thin stripes $\Oe(y,\tau)$ to the set $K$, using the same scheme as in \autoref{alg:noupdate}, we update the piecewise constant function $v$ by setting
\[v = \kappa \chi_{K} + 1 \chi_{\Omega\setminus K}\]
and then we compute a corresponding function $u$, solution of
 \begin{equation}
 \label{eq:u}
  \left\{
  \begin{array}{rcll}
   u+\alpha(-1)^m\divz(v \nabla^m u)&=&f & \text{ in } \Omega,\\[0.1cm]
   u = \normalderivative{u}=\cdots=\normalderivativem{u}{m-1}&=& 0& \text{ on } \partial \Omega,
  \end{array}
  \right.
  \end{equation}
  where $m=1$ or $m=2$, with a finite element method, which is then used for computing $\abs{\nabla^m u(y)}^2$ and selecting the next
  at most $s$ thin stripes $\Oe(y,\tau)$  for including to the set $K$. The process of alternating between the addition
  of stripes and updates of the smoothed function $u$ is repeated until no more admissible points $y \in L$ exist, i.e., when  
  the inequality $\abs{\nabla^m u(y)}^2<\frac{\beta \kappa}{\ve^2 \alpha (1-\kappa)}$ holds.
  
  Due to the complexity in solving a sixth order equation with discontinuous coefficients, we do not implement here the Algorithm \ref{alg:update} when $m=3$.
  
  The reason behind the update of $v$ lies in the fact that the asymptotic expansion derived above becomes increasingly inaccurate as the number of the stripes becomes larger. This means that at some point one has to update $v$ in order to get better reconstructions. The main drawback of this method lies on the fact that we cannot update $v$ at every step because this would imply to solve an elliptic equation which is a lengthy and costly procedure. Thus we choose the number $s$ sufficiently large in such a way that approximately less than $8$ computations of the $2m$-order elliptic equation are needed.  
\begin{algorithm}[ht]
\DontPrintSemicolon
\SetAlgoLined
    \KwData{input data $f$, $m=1$ or $m=2$, parameters $0 < \kappa \ll 1$, $\alpha$, $\beta > 0$, $\ve$, $\delta_0>0$, $\varrho_0>0$, $s\in\N$;}
    \KwResult{the set of line segments $S$ and the set of thin stripes $K$;}
    \BlankLine
    \SetKwInput{Initialize}{Initialization}
    \Initialize{set
      $S = \emptyset$, $K = \emptyset$ and $L = \Omega\setminus(\partial\Omega \oplus \mathcal{B}_{\delta_0}(0))$;}
    \BlankLine
    compute the solution $u$ of
  \begin{equation*}
  \left\{
  \begin{array}{rcll}
   u+\alpha(-1)^m\divz(\nabla^m u)&=&f & \text{ in } \Omega,\\[0.1cm]
   u = \normalderivative{u}=\cdots=\normalderivativem{u}{m-1}&=& 0& \text{ on } \partial \Omega.
  \end{array}
  \right.
  \end{equation*}
  with a finite element method.

    \Repeat{$\max_{y \in L} \abs{\nabla^m u(y)}^2 < \frac{\beta \kappa}{\alpha \ve^2 (1-\kappa)}$}{
      set $k = 1$;
      \Repeat{$k > s$ or $\abs{\nabla^m u(y)}^2 < \frac{\beta \kappa}{\alpha \ve^2 (1-\kappa)}$}{
          find $y\in L$ such that $\abs{\nabla^m u(y)}^2$ is maximal;

          compute the line segment  $\Sigma_\ve(y,\tau)$ and the thin stripe $\Oe(y,\tau)$;

          set $S \leftarrow S \cup \Sigma_\ve(y,\tau)$ and $K \leftarrow K \cup \Oe(y,\tau)$;

          set $L \leftarrow L \setminus (\Oe(y,\tau)\oplus \mathcal{B}_{\varrho_0}(0)) $;
          set $k \leftarrow k+1$;
      }
      set $v = \kappa \chi_{K} + 1 \chi_{\Omega\setminus K} $;
      compute the solution $u$ of
  \begin{equation*}
  \left\{
  \begin{array}{rcll}
   u+\alpha(-1)^m\divz(v \nabla^m u)&=&f & \text{ in } \Omega,\\[0.1cm]
   u = \normalderivative{u}=\cdots=\normalderivativem{u}{m-1}&=& 0& \text{ on } \partial \Omega.
  \end{array}
  \right.
  \end{equation*}
  with a finite element method.
    }
    \caption{Implementation with updates of $v$.}\label{alg:update}
  \end{algorithm}

All the algorithms described above have been implemented in Matlab.

\subsection*{Results of Numerical Experiments}
To test the proposed algorithms we have performed five experiments, see below Test 1, 2, 3, 4 and 5.
In Test 1, 2 and 3, the source term $f$ is given by the simulated qPAT data presented in \autoref{fig:qpat}, down-sampled to an image of size $130\times 130$ in order to save computational time. These tests are the results of the application of the Algorithms 1, 2 and 3, respectively.\\
Tests 4 and 5 are performed with the same qPAT data but corrupted by a small amount of noise. Specifically, in Test 4 and Test 5 we add to the image a Gaussian noise with standard deviation of $0.1\%$ and $2\%$, respectively, of the average signal of qPAT data. Test 4 serves for a direct comparison with the numerical outcomes in \cite{BerMusNaeSch16}.

For all the experiments, the parameters $\ve$, corresponding to the length of a line segment and the thickness of a stripe, is set to be $\ve = h$, where $h$ being the pixel size. The size of stripe's neighborhood, i.e. $\varrho_0$, is equal to $\varrho_0 = h$. Moreover, in all tests, we set $\delta_0=12h$, i.e. starting from and image of size $130\times 130$, the restricted set $L$ of $\Omega$, defined in Algorithm \ref{alg:noupdate}, \ref{alg:m2} and \ref{alg:update}, has dimensios $106\times 106$, which is indeed the size of the images in Test 1,2,3,4,5. 

The parameters used in the five tests are summarized in Table \ref{tab: test}. 

\begin{table}[!h]
	\caption{Parameters used in Test 1, 2, 3, 4 and 5. In the first column we show the number of the test. In the second one, we indicate if there is noise in the qPAT data, i.e. whether $f$ is corrupted by Gaussian noise. In the third and fourth columns, we specify the algorithm which we implement and the order of the derivatives we are considering, respectively. The last three columns give the principal parameters which are used to implement the various algorithms.}\label{tab: test}
	\begin{center}
		{\renewcommand{\arraystretch}{1.2}
			\begin{tabular}{|l|c|c|c|c|c|c|}
				\cline{1-7} & & & & & & \vspace{-0.2cm} \\
				Test    & Noise     &Algorithm & $m$ (order) & $\alpha$   & $\kappa$   & $\beta$ \\ \hline  
				&  &           & $1$       &            &            & $0.0072$  \\ \cline{4-4} \cline{7-7}
				Test 1  & no &     $1$   & $2$       &            &            & $1.1029\cdot 10^{-4}$  \\ \cline{4-4} \cline{7-7}
				&   &           & $3$       &            &            & $6.4453\cdot 10^{-5}$  \\ \cline{1-4} \cline{7-7}
				Test 2  & no  &    $2$    & $2$       & $10^{-1}$  & $10^{-2}$  &  $9.5803\times 10^{-5}$	\\ \cline{1-4}   \cline{7-7} 
				Test 3  & no &    $3$    & $1$       &            &            & $0.0072$ \\ \cline{4-4}   \cline{7-7}
				& &           & $2$       &            &            & $1.1029\cdot 10^{-4}$ \\ \cline{1-5} \cline{7-7}
				Test 4  & yes ($0.1\%$)  &       & $2$       &  & &  $9.288\cdot 10^{-4}$	\\ \cline{1-2} \cline{4-4}  \cline{7-7}  		
				& &       &  $1$      &         &            & 	$0.0724$					\\ \cline{4-4} \cline{7-7}
				Test 5	& yes ($2\%$) & $1$          & $2$       &  $1$          &           & $0.0020 $  \\ \cline{4-4} \cline{7-7} 
				&   &           & $3$       &            &            & $0.0015$  \\ \hline	
			\end{tabular}
		}
	\end{center}   
\end{table}

\textbf{Test 1.} We apply Algorithm \ref{alg:noupdate}. The numerical results are given in Figure \ref{fig:test1}. In the first column we have $|\nabla^m u|$, where $m=1,2,3$, respectively. On the right column, we give the line segments given by the application of Algorithm \ref{alg:noupdate}.
Despite the coefficients $v$ remains constant in all the iterations, the numerical outcomes of higher order elliptic equations, see the cases $m=2$ and $m=3$, bring to identify both the absorption ($\mu$) and the diffusion ($D$) coefficients. This confirms analytical results in \cite{NaeSch14,BerMusNaeSch16} which state that the union of jumps in coefficient $\mu$ and $D$ is contained in derivatives of $f$ up to the second order.  
	
\begin{figure}[!ht]
	\begin{center}
		\vspace{0.4cm}
		    \includegraphics[height=6cm,width=6cm]{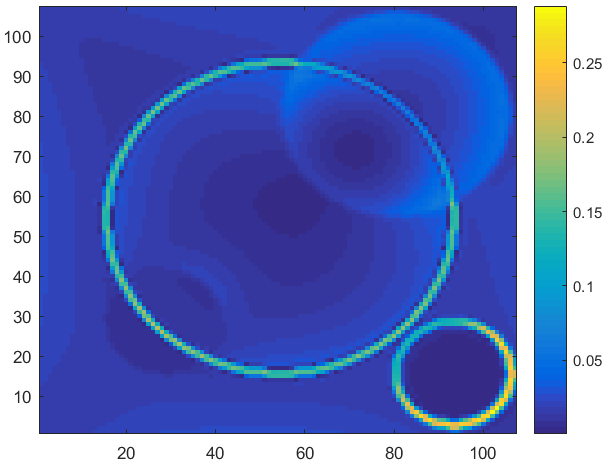}\hspace{1cm}
			\includegraphics[height=6cm,width=6cm]{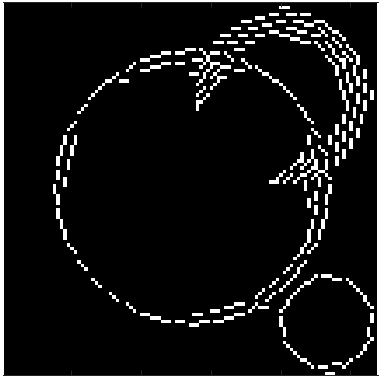}\vspace{0.8cm}
			\includegraphics[height=6cm,width=6cm]{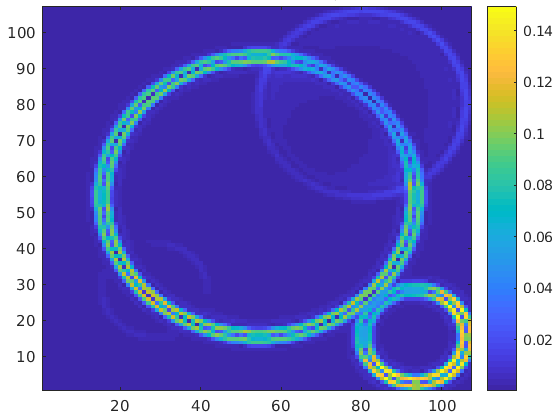}\hspace{1cm}
			\includegraphics[height=6cm,width=6cm]{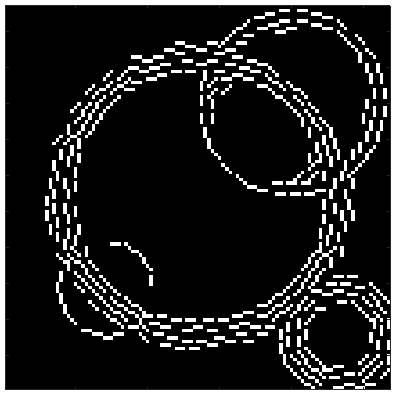}\vspace{0.8cm}
		    \includegraphics[height=6cm,width=6cm]{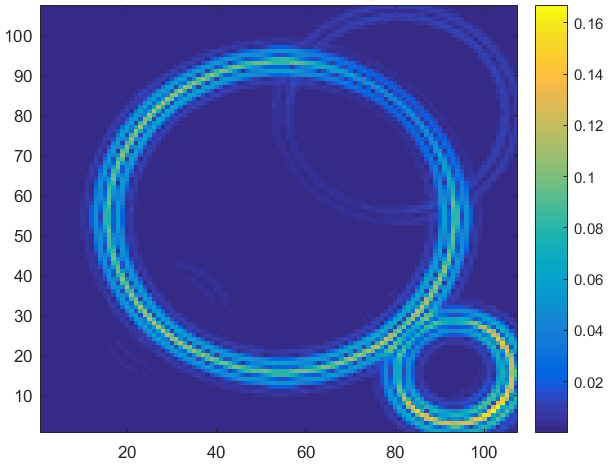}\hspace{1cm}
			\includegraphics[height=6cm,width=6cm]{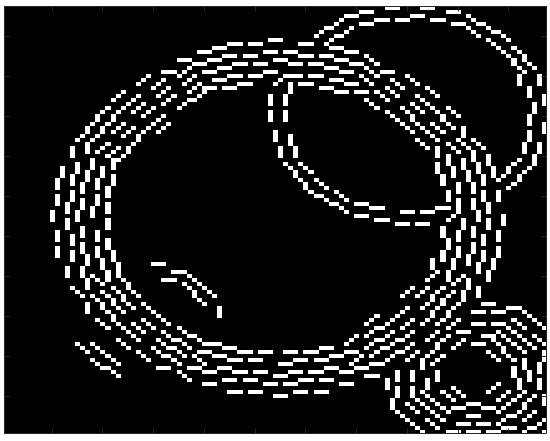}			
			\caption{Test 1. Implementation of the Algorithm 1. In the first column is represented $|\nabla^m u|$, where $m=1,2,3$. The second column gives the numerical outcomes of   constructed line segments. The results are related to: $m=1$ in the first row, $m=2$ in the second row and $m=3$ in the third row. We emphasize that with $m=1$ the small circle on the left bottom part of the image is not identified. Instead, it appears in the reconstruction with $m=2$ and $m=3$.}\label{fig:test1}
	\end{center}
\end{figure}

\textbf{Test 2.} We apply the Algorithm \ref{alg:m2}. The numerical results are given in Figure \ref{fig:test2}. We observe that the numerical outcomes, by using the results in Remark \ref{rem:eigen}, are perceptibly better than the case with the stopping rule $|\nabla^m u(y)|>\frac{\beta \kappa}{\ve \alpha(1-\kappa)}$. 
\begin{figure}[!ht]
	\begin{center}
		\vspace{0.4cm}
		\includegraphics[height=6cm,width=6cm]{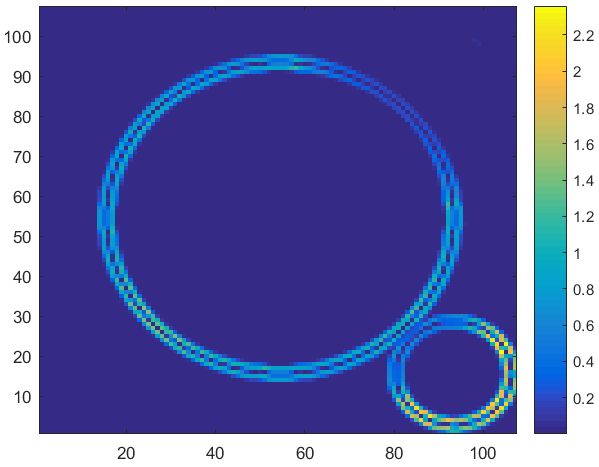}\hspace{1cm}
		\includegraphics[height=6cm,width=6cm]{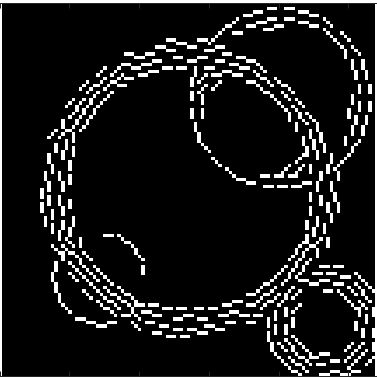}
		\end{center}
	\caption{Test 2. Implementation of Algorithm \ref{alg:m2}, i.e., we are using the criteria in Remark \ref{rem:eigen} to draw the line segments.}\label{fig:test2}
\end{figure}

\textbf{Test 3.} We apply the Algorithm \ref{alg:update}. The numerical results are given in Figure \ref{fig:test3}. In this case, we set the parameter $s$ to be $s=40$ for $m=1$, and $s=50$ for $m=2$. Comparing the numerical outcomes of this test with that ones of Test 1, we can appreciate how the updates of the coefficient $v$ bring to better reconstructions.   
\begin{figure}[!ht]
	\begin{center}
		\vspace{0.4cm}
		\includegraphics[height=6cm,width=6cm]{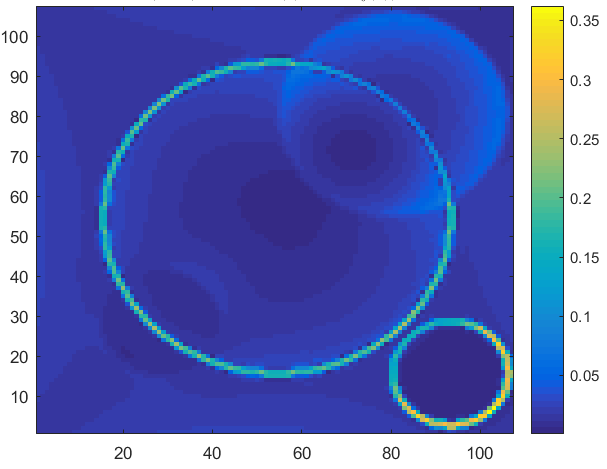}\hspace{1cm}
		\includegraphics[height=6cm,width=6cm]{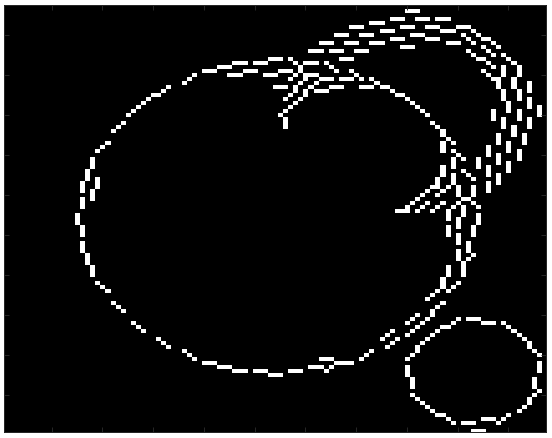}\vspace{0.8cm}
		\includegraphics[height=6cm,width=6cm]{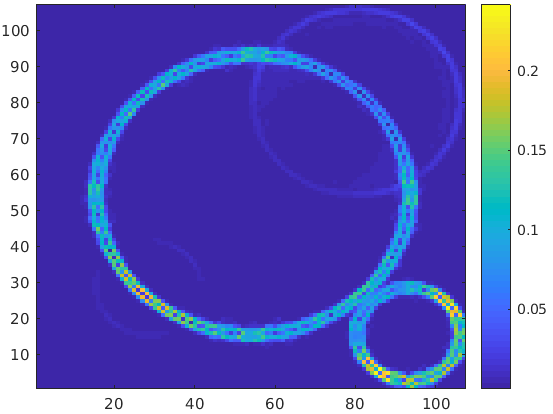}\hspace{1cm}
		\includegraphics[height=6cm,width=6cm]{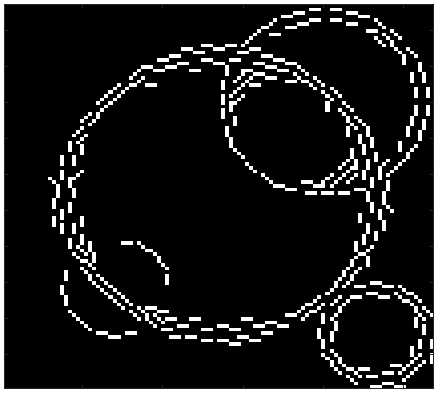}
	\end{center}
	\caption{Test 3. Implementation of Algorithm \ref{alg:update}. In the first row, we have the results given by the second order elliptic equation, i.e. we choose $m=1$ in the algorithm. In the second row, we provide the numerical outcomes of the fourth order elliptic equation, i.e. we choose $m=2$ in the algorithm. For $m=1$ we set $s=40$. For $m=2$, we use $s=50$. }\label{fig:test3}
\end{figure}

\textbf{Test 4.} We apply the Algorithm \ref{alg:noupdate}, with $m=2$ to an image which is corrupted by a small Gaussian noise with standard deviation of $0.1\%$ of the average signal value of the image. The numerical results are given in Figure \ref{fig:test4}. Due to the presence of noise, we use a greater value of $\alpha$ with respect to the previous case, which is set to be $\alpha=1$. 
The numerical results are more stable of the ones in \cite[Fig. 3]{BerMusNaeSch16}, in fact we are able to detect all the four circles in the image.    

\begin{figure}[!ht]
	\begin{center}
		\vspace{0.4cm}
		\includegraphics[height=6cm,width=6cm]{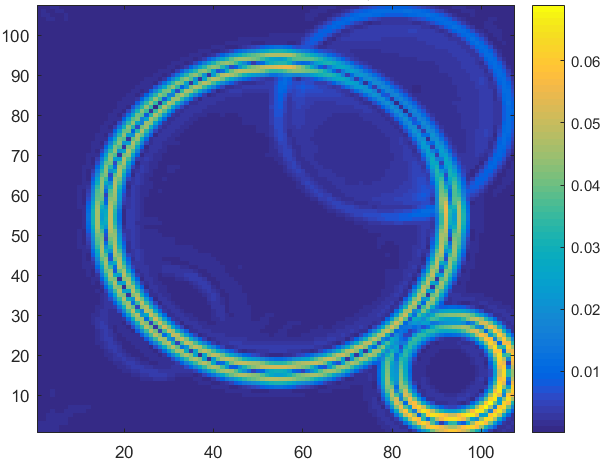}\hspace{1cm}
		\includegraphics[height=6cm,width=6cm]{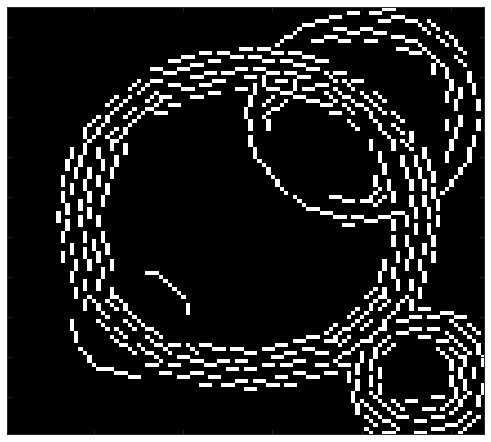}
	\end{center}
	\caption{Test 4. Implementation of Algorithm \ref{alg:noupdate}, in the case where the qPAT data are corrupted by a Gaussian noise of $0.1\%$.}\label{fig:test4}
\end{figure}

\textbf{Test 5.} We apply the Algorithm \ref{alg:noupdate} to an image which is corrupted by a Gaussian noise with standard deviation of $2\%$ of the average signal value of the image.  The numerical results are given in Figure \ref{fig:test5}. Comparing our results with the ones in \cite[Fig. 3]{BerMusNaeSch16}, we observe that they are rather good. We underline that, for the case $m=3$, we expect better results in the case where more sophisticated finite element methods to find the solution of the sixth order equation are applied. Certainly, the splitting method, used in this paper, introduces some errors in the reconstructions because, numerically, only two of the three prescribed boundary conditions are satisfied, namely $u=0$ and $\frac{\partial u}{\partial n}=0$.    

\begin{figure}[!ht]
	\begin{center}
		\vspace{0.4cm}
		\includegraphics[height=6cm,width=6cm]{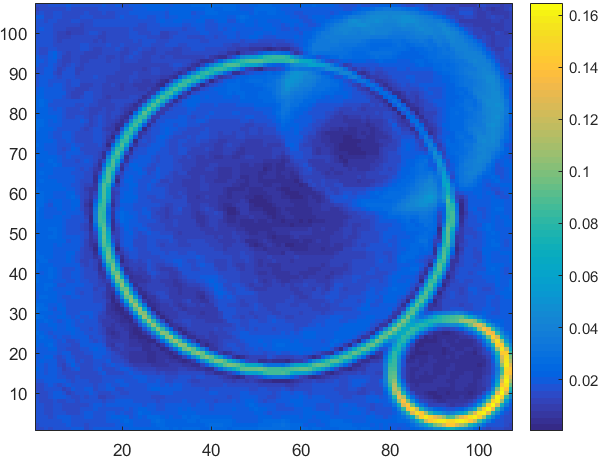}\hspace{1cm}
		\includegraphics[height=6cm,width=6cm]{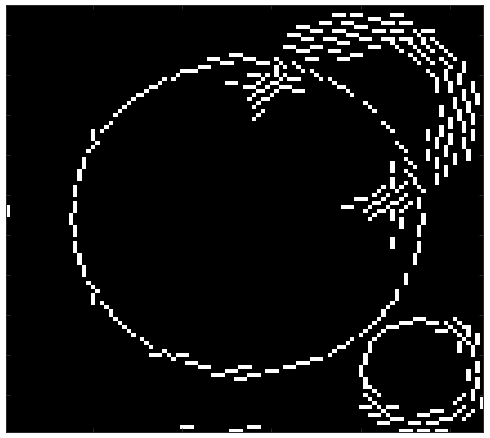}\vspace{0.5cm}
		\includegraphics[height=6cm,width=6cm]{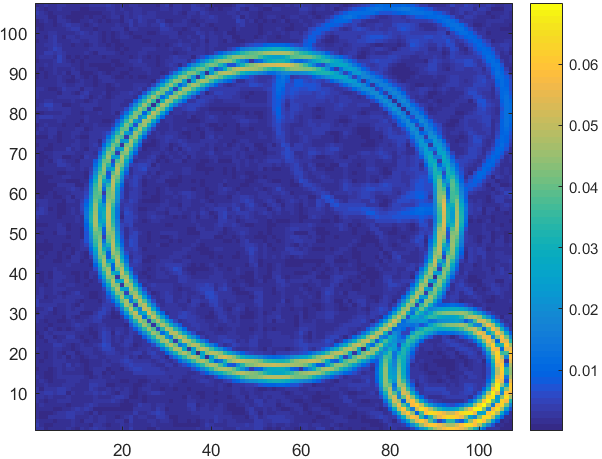}\hspace{1cm}
		\includegraphics[height=6cm,width=6cm]{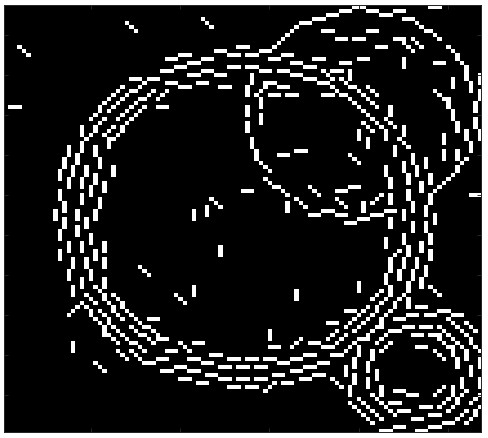}\vspace{0.5cm}
		\includegraphics[height=6cm,width=6cm]{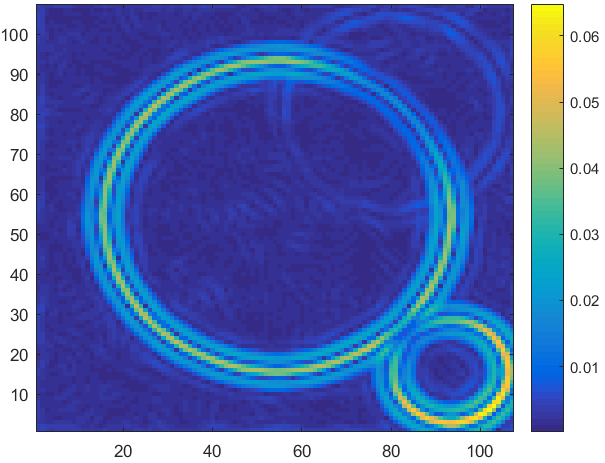}\hspace{1cm}
		\includegraphics[height=6cm,width=6cm]{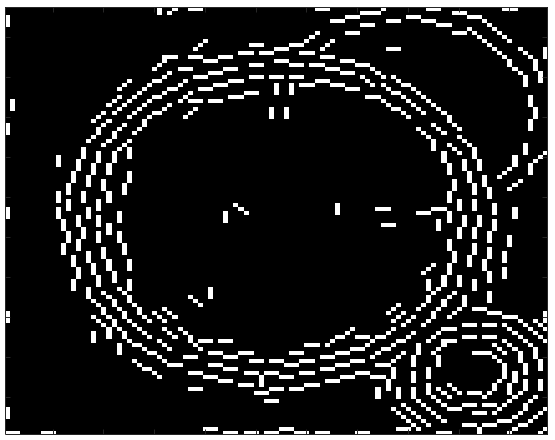}
	\end{center}
	\caption{Test 5. Implementation of Algorithm \ref{alg:noupdate}, in the case where the qPAT image is corrupted by a Gaussian noise of $2\%$. We provide the results for $m=1$ in the first row, for $m=2$ in the second row and finally for $m=3$ in the third row.}\label{fig:test5}
\end{figure}

\newpage
\section*{Conclusion}
In this paper we applied the method of asymptotic expansion of line segments for detection of \emph{discontinuities in derivatives} 
of some data $f$. Asymptotic expansions have been considered for detecting discontinuities in data (and not the derivatives). 
As a test example we considered Quantitative Photoacoustic Tomography with piecewise constant material parameters. This example 
has been considered before in \cite{BerMusNaeSch16}. The method of choice there is based on a differential Canny's edge detector. 
This method requires a pre-smoothing step, which is dependent on the order of discontinuity to be recovered. Conceptually 
our new approach includes the filtering into the detection algorithms and delivers also a tangential direction of the edge.

\subsection*{Acknowledgements}
OS is supported by the Austrian Science Fund (FWF), with SFB F68, project F6807-N36 (Tomography with Uncertainties) and 
I3661-N27 (Novel Error Measures and Source Conditions of Regularization Methods for Inverse Problems). EB and OS are grateful to 
BIRS, Banff center, for hosting a workshop ``Reconstruction Methods for Inverse Problems'', which supported finalizing the main 
mathematical ideas.

\appendix
\section{Proof of Lemma \ref{le:pde}}\label{append} 
To prove the well-posedness of the problem \eqref{u_strong} and \eqref{ue_strong}, we need the following generalized version of the Poincar\'e-type inequality 
\begin{lemma}[m-order Poincar\' e's Inequality, \cite{Ada75}] \label{le:poincare}
	The two norms $\norm{\cdot}_{H^m(\Omega)}$ and $\norm{\nabla^m \cdot}_{L^2(\Omega)}$ are
	equivalent in $\hzn$, i.e., there exists a positive constant $C$ such that
	\begin{equation} \label{pineq}
	\norm{\nabla^m u}_{L^2(\Omega)} \leq \norm{u}_{H^m(\Omega)} \leq C \norm{\nabla^m u}_{L^2(\Omega)} \text{ for all } u \in \hzn.
	\end{equation}
\end{lemma}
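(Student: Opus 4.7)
The left-hand inequality in \eqref{pineq} is immediate from the definition of the Sobolev norm, since $\|\nabla^m u\|_{L^2(\Omega)}^2$ is a subset of the squared terms comprising $\|u\|_{H^m(\Omega)}^2 = \sum_{|\alpha| \leq m} \|\partial^\alpha u\|_{L^2(\Omega)}^2$. The substantive content is the reverse inequality, and my plan is to obtain it by iterated application of the classical first-order Poincar\'e inequality.

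First, I would recall the standard Poincar\'e inequality: since $\Omega$ is bounded (Assumption \ref{ass:assumption_domains}), there exists a constant $C_P = C_P(\Omega) > 0$ such that $\|v\|_{L^2(\Omega)} \leq C_P \|\nabla v\|_{L^2(\Omega)}$ for every $v \in H_0^1(\Omega)$. Next, I would establish the key stability property: for every multi-index $\beta$ with $|\beta| \leq m-1$ and every $u \in \hzn$, the partial derivative $\partial^\beta u$ belongs to $H_0^{m-|\beta|}(\Omega)$, and in particular to $H_0^1(\Omega)$. This is obvious for test functions $\varphi \in C_c^\infty(\Omega)$, since $\partial^\beta \varphi$ is again compactly supported and smooth; the general case follows by a density argument using the continuity of the linear map $\partial^\beta \colon H^m(\Omega) \to H^{m-|\beta|}(\Omega)$ and the definition $\hzn = \overline{C_c^\infty(\Omega)}^{H^m(\Omega)}$.

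Then I would iterate: applying Poincar\'e to each $\partial^\alpha u$ with $|\alpha| = k$ for $0 \leq k \leq m-1$ (an element of $H_0^1(\Omega)$ by the previous step) yields the telescoping estimate
\begin{equation*}
\|\nabla^k u\|_{L^2(\Omega)} \leq \tilde C \|\nabla^{k+1} u\|_{L^2(\Omega)}, \qquad k = 0, 1, \ldots, m-1,
\end{equation*}
where $\tilde C$ depends only on $C_P$ and the dimension (through the number of multi-indices of each order). Composing these inequalities gives $\|\nabla^k u\|_{L^2(\Omega)} \leq \tilde C^{m-k} \|\nabla^m u\|_{L^2(\Omega)}$ for every $k = 0, 1, \ldots, m-1$, and summing the squares over $k = 0, \ldots, m$ produces the asserted bound $\|u\|_{H^m(\Omega)} \leq C \|\nabla^m u\|_{L^2(\Omega)}$ after absorbing constants.

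The only delicate point in the plan is the verification that differentiation preserves membership in the zero-trace subspace, i.e.\ $\partial^\beta u \in H_0^{m-|\beta|}(\Omega)$ for $u \in \hzn$. On arbitrary open sets this would require an argument, but on the bounded Lipschitz domain $\Omega$ of our setting the result is standard and is precisely what the citation \cite{Ada75} provides. Everything else is bookkeeping, and no geometric assumption on $\Omega$ beyond boundedness is needed for the chain of Poincar\'e inequalities to close.
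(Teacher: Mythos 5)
Your argument is correct: the left-hand inequality is immediate from the definition of the Sobolev norm, and the right-hand one follows by iterating the first-order Poincar\'e inequality through the chain of derivatives, using the density of $C_c^\infty(\Omega)$ in $H_0^m(\Omega)$ to verify that $\partial^\beta u \in H_0^1(\Omega)$ whenever $u \in H_0^m(\Omega)$ and $|\beta|\leq m-1$. The paper itself does not give a proof of this lemma --- it is stated with a citation to Adams and used as a black box in the proof of the subsequent well-posedness lemma --- so there is no authored argument to compare against; what you have written is exactly the standard derivation behind that citation.

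One small remark on your closing paragraph. The density argument you describe (which is the one that actually does the work) already holds on any bounded open set: differentiation $\partial^\beta$ is a bounded linear map from $H^m(\Omega)$ to $H^{m-|\beta|}(\Omega)$, it carries $C_c^\infty(\Omega)$ into $C_c^\infty(\Omega)$, and the paper defines $H_0^m(\Omega)$ precisely as the $H^m$-closure of $C_c^\infty(\Omega)$. No Lipschitz regularity of $\partial\Omega$ is needed for this step, so the caveat contrasting ``arbitrary open sets'' with ``the bounded Lipschitz domain of our setting'' introduces a distinction that your own proof has already dissolved. (Lipschitz regularity would only become relevant under a trace-based definition of $H_0^m(\Omega)$, which is not the one used here.) The only geometric input anywhere in the argument is boundedness of $\Omega$, exactly as you observe in your final sentence.
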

\begin{proof}
We only concentrate on the equation \eqref{u_strong} because the argument of the proof is identical for \eqref{ue_strong}. \\
We first find the weak formulation of the equation \eqref{u_strong} and then we study its well-posedness by apllying the Lax-Milgram theorem. Secondly, we show that the weak formulation of \eqref{u_strong} is in fact the optimality condition satisfied by the minimum of the functional \eqref{Jeps} (where $\zeta=v$) and we prove the equivalence between the minimum problem for the functional \eqref{Jeps} and the weak solution of \eqref{u_strong}. \\
\textit{Well-posedness of \eqref{u_strong}:} Multiplying the equation \eqref{u_strong} for a test function $\varphi\in H^m_0(\Omega)$, then integrating by parts $m$-times and using the boundary conditions (see \eqref{u_strong}), we find the weak formulation of the problem: Find $u\in\hzn$ such that 
\begin{equation}\label{eq:weak_form_u}
\alpha \int_\Omega v \nabla^m u \cdot \nabla^m\varphi \dx + \int_\Omega u\varphi \dx = \int_\Omega f \varphi \dx,\qquad
\text{ for all } \varphi \in \hzn,
\end{equation}
which can be equivalently written in the form
\begin{equation*}
a(u,\varphi)=F(\varphi)\quad \text{ for all } \varphi \in \hzn,
\end{equation*}
where $a$, $F$ denote the bilinear form and the linear functional, respectively,
\begin{equation*}
a(u,\varphi):=\alpha\int_\Omega v \nabla^m u \cdot \nabla^m\varphi \dx + \int_\Omega u \varphi \dx\quad \text{ and }\quad
F(\varphi):=\int_\Omega f \varphi \dx.
\end{equation*}
In order to apply the Lax-Milgram theorem we need prove continuity and coercivity of $a$ and continuity of $F$. Continuity follows by the application of the Cauchy-Schwarz inequality, in fact for all $\psi,\varphi \in \hzn$, we have 
\begin{equation*}
 |{a(u,\varphi)}| \leq C\norm{u}_{H^m(\Omega)} \norm{\varphi}_{H^m(\Omega)},\qquad 
 |F(\varphi)| = \Big|\int_\Omega f \varphi \dx\Big| \leq C \norm{f}_{L^{\infty}(\Omega)} \norm{\varphi}_{H^m(\Omega)}.
\end{equation*}
Coercivity of $a$ on $\hzn$ follows from the Poincar\'e inequality \eqref{pineq} and the definition of $v$ given in \eqref{eq:ve}
\begin{equation*}
 {a(u,u)} \geq C(\norm{\nabla^m u}^2_{L^2(\Omega)} + \norm{u}^2_{L^2(\Omega)}) \geq C \norm{u}^2_{H^m(\Omega)}.
\end{equation*}
Hence, by Lax-Milgram lemma (see \cite{Bab70}) there exists a unique weak solution $u \in \hzn$  to
\begin{equation*}
a(u,\varphi)=F(\varphi), \text{ for all } \varphi \in \hzn
\end{equation*}
and hence of \eqref{ue_strong}. The energy estimate follows by means of the coercivity of $a$ and the continuity of $F$, i.e., 
	\begin{equation*}
		 c \norm{u}^2_{H^m(\Omega)}\leq |{a(u,u)}|= |F(u)|\leq C \norm{f}_{L^{\infty}(\Omega)} \norm{u}_{H^m(\Omega)}.
	\end{equation*} 
\textit{Equivalence of the problems:} Let us assume that $u$ is the minimum of the functional $\mathcal{J}(u; v)$. Then we choose $\xi\in\mathbb{R}$, and for all $h\in H^m_0(\Omega)$ we define $w:=u+\xi h$. Trivially it holds $ \mathcal{J}(u; v)\leq \mathcal{J}(w; v)$. Then, by simple calculations, we have that
	\begin{equation*}
		\frac{\mathcal{J}(u+\xi h;v)-\mathcal{J}(u;v)}{\xi}=\int_{\Omega}(u-f)h\, dx +\alpha\int_{\Omega}v\nabla^m u \cdot \nabla^m h\, dx + \mathcal{O}(\xi),
	\end{equation*}
which gives, as $\xi\to 0$, the weak formulation \eqref{eq:weak_form_u}. On the contrary, assuming that $u$ is the solution to \eqref{eq:weak_form_u} then, taking $\varphi=\xi h$, for all $h\in H^m_0(\Omega)$ and $\xi\in\mathbb{R}$, we find
	\begin{equation*}
		\mathcal{J}(u+\xi h;v)=\mathcal{J}(u;v)+\frac{1}{2}\left(\int_{\Omega}\xi^2h^2\, dx +\alpha \int_{\Omega}\xi^2 v |\nabla^m h|^2\, dx\right).
	\end{equation*}
Now, noticing that 	$\int_{\Omega}\xi^2h^2\, dx +\alpha \int_{\Omega}\xi^2 v |\nabla^m h|^2\, dx\geq 0 $ for all $h\in H^m_0(\Omega)$, $\alpha>0$ and $v\in L^{\infty}_+(\Omega)$, we get
	\begin{equation*}
		\mathcal{J}(u+\xi h;v)-\mathcal{J}(u;v)\geq 0.
	\end{equation*}
\end{proof}

\end{document}